\numberwithin{equation}{section}
\newcommand{\bigzero}{\mbox{\normalfont\Large 0}}
\DeclareMathOperator\ve{\varepsilon}
\DeclarePairedDelimiter\ev{\big\langle}{\big\rangle}%
\DeclarePairedDelimiter\ek{\Big\vert}{\Big\vert}%
\DeclarePairedDelimiterX{\pro}[2]{(}{)}{#2^{-1}\circ #1}
\newcommand{\HLM}{Hardy-Littlewood maximal function}
\newcommand{\KFP}{Kolmogorov--Fokker--Planck }
\newcommand{\bR}{\mathbb{R}^n}
\newcommand{\RR}{\mathbb{R}}
\newcommand{\NN}{\mathbb{N}}
\newcommand{\PP}{\mathcal{P}}
\newcommand{\GG}{\mathscr{G}}
\newcommand{\ZZ}{\mathbb{Z}}
\newcommand{\TT}{\mathcal{T}}
\newcommand{\SSS}{\mathcal{S}}
\newcommand{\UU}{\mathscr{U}}
\newcommand{\Om}{\Omega}
\newcommand{\BB}{\mathcal{B}}
\newcommand{\ptl}{\partial}
\def\fr{\frac}
\newcommand{\CZ}{Calder\'on-Zygmund}
\DeclareMathOperator*{\esssup}{ess\,sup}
\newcommand{\descitem}[2]{\item[{(#1)}]\label{#2}}
\newcommand{\descref}[2]{\hyperref[#1]{\textnormal{\textcolor{black}{(}\textcolor{blue}{\bf #2}\textcolor{black}{)}}}}
\crefname{section}{Section}{Sections}
\crefname{subsection}{Subsection}{Subsections}
\crefname{condition}{Condition}{Conditions}
\crefname{hypothesis}{Hypothesis}{Hypothesis}
\crefname{assumption}{Assumption}{Assumptions}
\crefname{lemma}{Lemma}{Lemmas}
\crefname{claim}{Claim}{Claims}
\crefname{remark}{Remark}{Remarks}
\newtheorem{theorem}{Theorem}[section]
\newtheorem{lemma}[theorem]{Lemma}
\newtheorem{proposition}[theorem]{Proposition}
\newtheorem{definition}[theorem]{Definition}
\newtheorem{remark}[theorem]{Remark}        
\numberwithin{equation}{section}
\newcommand{\redref}[2]{\texorpdfstring{\protect\hyperlink{#1}{\textcolor{black}{(}\textcolor{red}{#2}\textcolor{black}{)}}}{}}
\newcommand{\redlabel}[2]{\hypertarget{#1}{\textcolor{black}{(}\textcolor{red}{#2}\textcolor{black}{)}}}
\definecolor{armygreen}{rgb}{0.15, 0.75, 0.0}
\begin{document}
\title[Pointwise Estimates for Kolmogorov--Fokker--Planck operators]{Pointwise and Weighted Hessian Estimates for Kolmogorov--Fokker--Planck type operators}
\author[A. Ghosh]{Abhishek Ghosh}
\email{abhi21@tifrbng.res.in}
\author[V. Tewary]{Vivek Tewary}
\email{vivek2020@tifrbng.res.in}
\address{Tata Institute of Fundamental Research, Centre for Applicable Mathematics, Bangalore--560065, Karnataka, India.}

\begin{abstract}
In this article, we obtain hessian estimates for \KFP operators in non-divergence form in several Banach function spaces. Our approach relies on a representation formula and newly developed sparse domination techniques in Harmonic Analysis. Our result when restricted to weighted Lebesgue spaces yields sharp quantitative hessian estimates for the \KFP operators.
\end{abstract}
\keywords{\KFP~operators, generalized Orlicz space, weights}
{\let\thefootnote\relax\footnote{\noindent 2010 {\it Mathematics Subject Classification.} 35K65, 35K70, 35B45, 46E30}}

\thanks{Authors are supported by Centre for Applicable Mathematics, Tata Institute of Fundamental Research.}
\maketitle
\section{Introduction}

In this article, we consider a class of Kolmogorov--Fokker--Planck type operator on
$\mathbb{R}^{N+1}$.
\begin{align}\label{maineq}
\displaystyle  \mathcal{L}u \equiv
\sum_{i,j=1}^{s_0}{\,a_{ij}(x, t)\ptl_{x_i}\ptl_{x_j}u
}+\sum_{i,j=1}^N b_{ij}x_i {\ptl_{x_j} u }- {\ptl_t
\,u}=0,
\end{align}
where $z=(x,t)\in { \RR}^{N+1}$, $1\leq s_0 \leq N$, and $b_{ij}$ is
constant for every $i,j= 1,\cdots, N$. The following
assumptions on the coefficients of $\mathcal{L}$:
\begin{description}
\descitem{H1}{H1}  $a_{ij}=a_{ji} \in L^{\infty} ({\RR}^{N+1})$ and there
exists a $\lambda >0$ such that
$$
\fr{1}{\lambda}\sum_{i=1}^{s_0}\xi_i^2 \leq \sum_{i,j=1}^{s_0}
a_{ij}(x, t)\xi_i \xi_j \leq {\lambda}\sum_{i=1}^{s_0}\xi_i^2
$$
for every $(x, t)\in {\RR}^{N+1}$, and $\xi \in {\RR}^{s_0}$.

\descitem{H2}{H2} The matrix $B=(b_{ij})_{N \times N}$ has the form
$$\left(
\begin{array}{cccccc}
0 & {B_1} & 0 & \cdots & 0 \\
0 & 0  & {B_2} & \cdots & 0 \\
\vdots & \vdots & \vdots & \ddots &\vdots \\
0 & 0 & 0 & \cdots & {B_d} \\
0 & 0 & 0 & \cdots & 0
\end{array}
\right)
$$
where $B_k$ is a matrix $s_{k-1}\times s_{k}$ with rank $s_k$ and
$s_0\geq s_1\geq \cdots\geq s_d$, $s_0+s_1+\cdots+s_d=N$.
\end{description}
It is well known that under the assumptions \descref{H1}{H1} and \descref{H2}{H2}, for any fixed $z_{0}\in \RR^{N+1}$ the ``frozen" operator 
\begin{align}\label{maineq2}\displaystyle  \mathcal{L}_0 \equiv
\sum_{i,j=1}^{s_0}{\,a_{ij}(z_0)\ptl_{x_i}\ptl_{x_j}u
}+\sum_{i,j=1}^N b_{ij}x_i {\ptl_{x_j} u }- {\ptl_t
\,u}\end{align}
 satisfies the well-known
H\"ormander's hypoellipticity condition. A fundamental property of \cref{maineq2} is that the fundamental solution can be explicitly written down since the
coefficients $a_{ij}$ are constant. In fact, it is well known that hypoelliptic operators with smooth coefficients satisfying H\"ormander's hypoellipticity condition are locally solvable \cite{Bram14}. This property has been useful, first in the Euclidean setting, to obtain the Hessian estimates on $L^p$-spaces for elliptic and parabolic equations in non-divergence form by means of Harmonic Analysis techniques, particularly, the analysis of \CZ~operators and commutators in the papers \cite{Chiarenza1991, BramCer1993} and then in the setting of \KFP operators \cite{Bramanti-JMAA-ultraparabolic}.

This present article addresses hessian estimates for Kolmogorov--Fokker--Planck type operators on several function spaces, for example, weighted $L^p$ spaces, generalized Orlicz spaces and variable Lebesgue spaces. To the best of our knowledge, hessian estimates for \cref{maineq} are not known beyond $L^p$ spaces.  The regularity theory of \KFP equations has become an area of extensive research recently, particularly due to its connections with kinetic equations that exhibit a variety of interesting physical and mathematical features \cite{silvestreRegularityEstimatesOpen2022}. In the ultraparabolic setting, one needs to work with Sobolev spaces adapted to the appropriate vector fields associated to \KFP operators. Indeed, the hypoelliptic structure of the equation determines the directions in which the equation has regularity properties. Our point of departure are the papers \cite{Bramanti-JMAA-ultraparabolic} and \cite{Poli1998} where unweighted versions of these results were proved. Other related works are \cite{Bramanti2013} in the non-divergence setting and \cite{ManPoli1998} for the divergence type equations. Two recent works that study weighted versions of $L^p$ estimates for the kinetic \KFP equations are \cite{Dong2021} and \cite{Niebel2022}. 

An important thrust of this work is to employ new techniques in Harmonic Analysis in the context of \KFP~operators to obtain pointwise estimates of the hessian. In the Euclidean setting, such techniques have been previously used in \cite{Bui-CCM,Bui-IMRN-parabolic}. Our work extends their methods in the following particulars:
\begin{itemize}
    \item The setting of \KFP operators requires techniques specific to homogeneous spaces. In particular, much of our analysis would rely on the existence of Christ's dyadic grids in metric spaces with doubling measures.
    \item Our focus is on obtaining sharp weighted estimates in Lebesgue and variable exponent Lebesgue spaces. See the discussion in \cref{sharpes}.
    \item Our stress is also on the unified nature of the estimates. Hence, our pointwise estimates can give sharp results in more general Banach function spaces. See \cref{Banach}.
\end{itemize}
Now we present our results systematically. Our first result addresses sharp quantitative weighted estimates on weighted Lebesgue spaces for \KFP~operators. 

\subsection{Sharp regularity estimates}\label{sharpes}
Sharp weighted estimates for the gradient are always of immense interest in the context of partial differential equations, for example we refer the work of Astala--Iwaniec--Saksman \cite{Astala-Iwaniec-Eero} where the authors have studied the connections between the quasi regularity of solutions of Beltrami equation with the sharp operator norm estimate of the Ahlfors--Beurling transform. Weighted estimates for the non-divergence elliptic equations were first addressed in \cite{Non-div-elliptic-Byun-2015} where they have proved $\|D^2 u\|_{L^p(\omega)}\lesssim \|f\|_{L^p(\omega)}$ for $2<p<\infty$ and $\omega\in A_{p/2}(\bR)$. Very recently, the estimate $\|D^2 u\|_{L^p(\omega)}\lesssim \|f\|_{L^p(\omega)}$ is improved for all $1<p<\infty$ and all $\omega\in A_{p}(\bR)$ by the authors in \cite{Bui-CCM}. They have employed the sparse domination technique from Harmonic analysis. Subsequently, the same techniques are employed to the parabolic setting in \cite{Bui-IMRN-parabolic} to obtain regularity estimates on generalized Orlicz space. Previously, regularity estimates on generalized Orlicz spaces for elliptic equations of non-divergence type was obtained by H\"ast\"o and Ok in \cite{Hasto-JDE-elliptic}. In this direction, we obtain the following sharp weighted regularity estimates on weighted $L^p$ spaces.
\begin{theorem}
\label{sharp-weighted-estimates}
Assume that the matrices $A$ and $B$ satisfy \descref{H1}{H1} and \descref{H2}{H2}. There exists $\delta$ such that if $A$ is $(\delta,\mathfrak{K})$-$\text{BMO}$ in the sense of \cref{BMOA} then for any $\Omega'\Subset \Omega\subset\RR^{N+1}$, $1<p<\infty$, and $\omega\in \mathcal{A}_{p} (\cref{Muckenhoupt-ultraparabolic})$ we have the following estimate
\begin{align}
\label{sharp-gradient-estimates}
    \sum_{i,j=1}^{s_0}\|u_{x_ix_j}\|_{L^p(\Omega',\omega)}\leq \kappa_{A, C, N, \delta}\,  [\omega]_{\mathcal A_{p}}^{\max\left\{\frac{1}{p-1}, 1\right\}}\left(\|\mathcal{L} u \|_{L^p(\Omega,\omega)}+\|u \|_{L^p(\Omega,\omega)}\right).
\end{align} 
for any $u\in \mathcal{S}^p_{\omega}(\Omega,\mathcal{L})$ (\cref{sobolevL}) such that $u$ is supported in $\Omega\cap\{z=(x,t)\in\RR^{N+1}:t>0\}$.
\end{theorem}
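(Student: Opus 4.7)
The plan is to follow the classical Chiarenza--Frasca--Longo scheme adapted to the Kolmogorov--Fokker--Planck homogeneous space, combined with modern sparse-domination techniques to extract the sharp $A_p$ power. First I would fix a point $z_0\in\Omega$ and use the explicit fundamental solution $\Gamma_{z_0}$ of the frozen operator $\mathcal{L}_0$ from \cref{maineq2}. Writing $\mathcal{L}_0 u=\mathcal{L}u+(\mathcal{L}_0-\mathcal{L})u=f+\sum_{k,\ell=1}^{s_0}(a_{k\ell}(z_0)-a_{k\ell}(\cdot))u_{x_kx_\ell}$ and differentiating the integral representation $u(z)=\int \Gamma_{z_0}(z,\zeta)\mathcal{L}_0 u(\zeta)\,d\zeta$ twice in $x_i,x_j$, one obtains the pointwise identity
\begin{equation*}
u_{x_ix_j}(z)=T_{ij}f(z)+c_{ij}(z_0)f(z)+\sum_{k,\ell=1}^{s_0} T_{ij}\bigl[(a_{k\ell}(z_0)-a_{k\ell})u_{x_kx_\ell}\bigr](z)+\sum_{k,\ell} c_{ij}(z_0)(a_{k\ell}(z_0)-a_{k\ell})u_{x_kx_\ell},
\end{equation*}
where $T_{ij}$ is the principal-value singular integral with kernel $\partial_{x_i}\partial_{x_j}\Gamma_{z_0}$. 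I would verify that each $T_{ij}$ is a Calder\'on--Zygmund operator on the space of homogeneous type $(\RR^{N+1},d,dz)$ built from the left-invariant quasi-distance and dilations attached to \descref{H2}{H2}: namely, $L^2$-boundedness (via the homogeneity and cancellation of the kernel) and H\"ormander-type regularity in the intrinsic metric.

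Next, using Christ's dyadic-cube construction in homogeneous spaces, I would establish a pointwise sparse bound
\begin{equation*}
|T_{ij}f(z)|\lesssim \sum_{Q\in\mathcal{S}} \langle |f|\rangle_Q \,\chi_Q(z),
\end{equation*}
following the by-now standard Lerner--Nazarov--Lacey approach, where the sparse family $\mathcal{S}$ is adapted to the KFP dyadic grid. For the commutator pieces $T_{ij}[(a_{k\ell}(z_0)-a_{k\ell})\cdot]$, I would use the analogous commutator sparse domination, which produces an additional factor $\|a_{k\ell}\|_{\mathrm{BMO}_\delta}\le \delta$. Invoking the sharp weighted bounds for sparse operators on spaces of homogeneous type then yields $\|T_{ij}f\|_{L^p(\omega)}\lesssim [\omega]_{\mathcal{A}_p}^{\max\{1/(p-1),1\}}\|f\|_{L^p(\omega)}$, matching the exponent in \cref{sharp-gradient-estimates}.

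To conclude, I would carry out a Chiarenza--Frasca--Longo style localization: cover $\Omega'$ by small KFP-cubes $Q_r(z_0)$ on which, by \descref{H1}{H1} and the $(\delta,\mathfrak{K})$-$\mathrm{BMO}$ hypothesis, the oscillation $|a_{k\ell}(z_0)-a_{k\ell}|$ has BMO norm at most $\delta$. Using a cutoff $\varphi$ supported in $Q_r(z_0)$ and writing the commutator estimate for $u\varphi$, the commutator and lower-order terms contribute a factor $C\delta\,[\omega]_{\mathcal{A}_p}^{\max\{1/(p-1),1\}}\sum\|u_{x_kx_\ell}\|_{L^p(Q_r,\omega)}$, which for $\delta$ small can be absorbed into the left-hand side, producing the stated estimate on each small cube. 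The remaining lower-order contributions (those coming from $b_{ij}x_i\partial_{x_j}u$ and $\partial_t u$, together with $c_{ij}f$) are controlled by a standard interpolation/Caccioppoli argument and contribute only the $\|u\|_{L^p(\Omega,\omega)}$ term. Patching the local estimates via a finite cover of $\Omega'\Subset\Omega$ gives the global bound. The main obstacle I anticipate is verifying the intrinsic H\"ormander regularity of $\partial_{x_i}\partial_{x_j}\Gamma_{z_0}$ uniformly in $z_0$ with the correct dependence on the ellipticity constant $\lambda$, since this is what makes the sparse domination constant $\kappa_{A,C,N,\delta}$ independent of $\omega$.
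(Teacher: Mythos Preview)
There is a genuine gap in your representation-formula step. By freezing the coefficients at a \emph{fixed} point $z_0$, the perturbation term you obtain,
\[
T_{ij}\bigl[(a_{k\ell}(z_0)-a_{k\ell})\,u_{x_kx_\ell}\bigr],
\]
is \emph{not} a commutator: it is the fixed-kernel operator $T_{ij}$ applied to the product $(a_{k\ell}(z_0)-a_{k\ell})\,u_{x_kx_\ell}$. Commutator sparse domination yields a BMO factor only for genuine commutators $[b,T]g=bTg-T(bg)$; it says nothing about $T(bg)$ alone. Applying the ordinary CZ/sparse bound to this term gives merely $\|T_{ij}(bg)\|_{L^p(\omega)}\lesssim[\omega]_{\mathcal A_p}^{\max\{1,1/(p-1)\}}\|bg\|_{L^p(\omega)}$, and under the $(\delta,\mathfrak{K})$-BMO hypothesis the factor $b=a_{k\ell}(z_0)-a_{k\ell}$ is $L^\infty$-bounded but \emph{not small}, so you cannot absorb it into the left-hand side. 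The same objection applies to your last pointwise term $c_{ij}(z_0)(a_{k\ell}(z_0)-a_{k\ell})u_{x_kx_\ell}$.

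The paper (following \cite{Chiarenza1991,Bramanti-JMAA-ultraparabolic}) freezes instead at the \emph{evaluation point} $z$; see \cref{represent}. This does produce the honest commutator $[a_{hk},K_{ij}](u_{x_hx_k})$ in \cref{represent2}, for which the BMO smallness can legitimately be used. The price is that the kernel $\Gamma_{ij}(z;\zeta^{-1}\circ z)$ now depends on $z$ through the freezing, i.e.\ it is a \emph{variable-kernel} singular integral, and the off-the-shelf sparse-domination theorems do not apply directly. The paper resolves this by the spherical-harmonics expansion $\Gamma_{ij}(z;\cdot)=\sum_{k,m}c_{ij}^{km}(z)K_{km}(\cdot)$ with rapidly decaying $c_{ij}^{km}$, reducing everything to a convergent series of \emph{constant-kernel} operators $T_{km}$ and commutators $[a_{hk},T_{km}]$ (\cref{represent3}); sparse domination is then established for each $T_{km}$ via \cref{weak-type}, \cref{weak-type-sharp} and \cref{Pointwise-estimate-sparse-local}, and for its commutator in \cref{Pointwise estimates-comm}, giving \cref{represent3-sparse}. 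This spherical-harmonics reduction is the analytic step your outline is missing. Once it is in place, the remainder of your plan (the sharp sparse bound of \cref{main-sparse-estimates}, absorption for small $\delta$, cutoff-plus-finite-cover to pass from small balls to $\Omega'\Subset\Omega$) matches \cref{estonball1} and the short proof of \cref{sharp-weighted-estimates} in the paper.
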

We would like to highlight certain key features of Theorem~\ref{sharp-weighted-estimates}. 
\begin{itemize}
\item
The exponent $\max\left\{\frac{1}{p-1}, 1\right\}$ in \eqref{sharp-gradient-estimates} is sharp. Let us consider the following equation
$$\Delta u=f\,\, \text{on}\,\, \mathbb{R}^n.$$
By means of Fourier transform we can write the hessian as $\nabla^2 u=(\mathcal{R}\otimes \mathcal{R}) (f)$, where $\mathcal{R}=(R_1, \cdots, R_n)$ is the Riesz transform. Therefore, using estimates from \cite{Benea-Bernicot}, we obtain $\|\nabla^{2}u\|_{L^p(\omega)}\lesssim [\omega]_{A_p(\mathbb{R}^n)}^{\max\{1, \frac{1}{p-1}\}}\|f\|_{L^p(\omega)}$, which validates the sharpness of our result \eqref{sharp-gradient-estimates}. 
\item
Our techniques of the proof of Theorem~\ref{sharp-weighted-estimates} are also applicable for elliptic equations of non-divergence form or parabolic equations of non-divergence form. Therefore, the estimate \eqref{sharp-gradient-estimates} is new even for elliptic equations of non-divergence form or parabolic equations of non-divergence form.

Previous results in this directions are motivated by the seminal works \cite{Caff-Ann-1990, Caff-Peral}. Most of these techniques heavily rely on the Fefferman-Stein sharp maximal function, small perturbation arguments, good-$\lambda$-type inequalities and reverse H\"older's inequality( self-improving property) of Muckenhoupt weights. Therefore, it is really not clear how to obtain sharp estimates using these methods. On the other hand, our proof relies on the representation formula \eqref{represent3} developed in \cite{Bramanti-JMAA-ultraparabolic}. We improve the representation formula \eqref{represent3} to \eqref{represent3-sparse}, where $u_{x_{i} x_{j}}$ is pointwise dominated by appropriate sparse operators (see Definition~\ref{defition-sparse-operator}). The primary usefulness of the sparse domination lies in the fact that it is easy to get quantitative weighted estimates for sparse operators which can be immediately passed on to the operator at hand. It should be noted that the exponent in \eqref{sharp-gradient-estimates} is same as that of the celebrated $A_2$ conjecture (now a Theorem \cite{Hyt-Ann}).
\end{itemize}
As mentioned above, our techniques also imply hessian estimates on several Banach function spaces. We state them more precisely.
\subsection{Estimates on Banach function spaces}\label{Banach}
The study of regularity estimates for non-divergence type elliptic and parabolic equations on generalized Orlicz spaces and variable Lebesgue spaces has gained a lot of attention over past decades, due to its physical as well as mathematical importance( see \cite{Mingione-ARMA-2001}, \cite{Byun-TAMS-variable}, \cite{Byun-Math-Annalen-Variable}, \cite{Hasto-JDE-elliptic}, \cite{Byun-Wang}, \cite{Zhikov86}) we believe our results, that is, \cref{main-theorem} and \cref{main-theorem2} will be of serious interest.
\begin{theorem}
\label{main-theorem}
Suppose that the matrices $A$ and $B$ satisfy \descref{H1}{H1} and \descref{H2}{H2}. There exists $\delta$ such that if $A$ is $(\delta,\mathfrak{K})$-$\text{BMO}$ in the sense of \cref{BMOA} then for any $\Omega'\Subset \Omega\subset\RR^{N+1}$, there exists a positive constant $\kappa$ depending on $p, q, \alpha, \beta, \lambda, \Omega', \Omega$, the matrix $B$ and $\mathfrak{K}$ such that for any $u\in S^{\varphi(\cdot)}(\Omega,d,|\cdot|)$ with support in $\Omega\cap\{z=(x,t)\in\RR^{N+1}:t>0\}$, we have
\begin{align}
    \|u\|_{S^{\varphi(\cdot)}(\Omega',d,|\cdot|)}\leq C\left(\|\mathcal{L}u\|_{L^{\varphi(\cdot)}(\Om,d,|\cdot|)}+\|u\|_{L^{\varphi(\cdot)}(\Om,d,|\cdot|)}\right)
\end{align} where $L^{\varphi(\cdot)}(\Om,d,|\cdot|)$ is the generalized Orlicz space satisfying conditions \descref{A0}{A0}, \descref{A1}{A1}, \descref{aInc$_p$}{aInc$_p$}, and \descref{aDec$_q$}{aDec$_q$}. The definition of the Sobolev space $S^{\varphi(\cdot)}(\Omega,d,|\cdot|)$ adapted to the ultraparabolic operator $\mathcal{L}$ may be found in \cref{sobolevL}. The definitions regarding generalized Orlicz spaces may be found in \cref{SparseDomGenOrc}. 
\end{theorem}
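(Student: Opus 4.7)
The plan is to reduce \cref{main-theorem} to a pointwise sparse domination of the Hessian entries, combined with a boundedness statement for sparse operators on the generalized Orlicz space $L^{\varphi(\cdot)}(\Omega,d,|\cdot|)$. Building on the representation formula of \cite{Bramanti-JMAA-ultraparabolic} and its sparse refinement that is the engine behind \cref{sharp-weighted-estimates}, the first task is to establish a pointwise inequality of the form
\begin{equation*}
\bigl|u_{x_i x_j}(z)\bigr| \;\lesssim\; \mathcal{A}_{\mathscr{S}}(\mathcal{L}u)(z) + \mathcal{A}_{\mathscr{S}}(Ru)(z), \qquad 1\leq i,j\leq s_0,
\end{equation*}
where $\mathcal{A}_{\mathscr{S}}$ is a sparse operator associated to a Christ dyadic grid on the space of homogeneous type $(\RR^{N+1},d,|\cdot|)$ adapted to $\mathcal{L}$, and $Ru$ is a lower-order term controlled pointwise by maximal averages of $u$. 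The hypothesis $\supp u \subset \Omega\cap\{t>0\}$ and the $(\delta,\mathfrak{K})$-$\mathrm{BMO}$ smallness of $A$ are what make the localization to $\Omega'\Subset\Omega$ consistent, with cut-off errors absorbed into $Ru$.

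The second step is to transfer this pointwise inequality to the generalized Orlicz scale. Since \cref{sharp-weighted-estimates} already yields
\begin{equation*}
\|\mathcal{A}_{\mathscr{S}} f\|_{L^p(\omega)} \;\leq\; C\,[\omega]_{\mathcal{A}_p}^{\max\{1,\,1/(p-1)\}}\,\|f\|_{L^p(\omega)}
\end{equation*}
for every $1<p<\infty$ and $\omega\in \mathcal{A}_p$, I would invoke a Rubio de Francia style extrapolation on $(\RR^{N+1},d,|\cdot|)$. Under \descref{A0}{A0}, \descref{A1}{A1}, \descref{aInc$_p$}{aInc$_p$}, \descref{aDec$_q$}{aDec$_q$}, the Hardy--Littlewood maximal operator with respect to $d$ is bounded on $L^{\varphi(\cdot)}$ and on its associate space, which is precisely the input required by the extrapolation machinery of Cruz-Uribe--H\"ast\"o (used in the Euclidean parabolic setting in \cite{Bui-IMRN-parabolic}, and before that by H\"ast\"o--Ok in \cite{Hasto-JDE-elliptic}) to deliver
\begin{equation*}
\|\mathcal{A}_{\mathscr{S}} f\|_{L^{\varphi(\cdot)}(\Omega,d,|\cdot|)} \;\lesssim\; \|f\|_{L^{\varphi(\cdot)}(\Omega,d,|\cdot|)}.
\end{equation*}

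Combining the two steps produces
\begin{equation*}
\sum_{i,j=1}^{s_0}\|u_{x_i x_j}\|_{L^{\varphi(\cdot)}(\Omega',d,|\cdot|)} \;\lesssim\; \|\mathcal{L}u\|_{L^{\varphi(\cdot)}(\Omega,d,|\cdot|)} + \|u\|_{L^{\varphi(\cdot)}(\Omega,d,|\cdot|)}.
\end{equation*}
To complete the full $S^{\varphi(\cdot)}$-norm one still needs control of the horizontal first-order derivatives $\ptl_{x_i}u$ and of the drift $Yu = \sum_{i,j} b_{ij} x_i \ptl_{x_j} u - \ptl_t u$; these follow from the interpolation-type inequalities already developed in \cite{Bramanti-JMAA-ultraparabolic}, whose proofs transfer to the Orlicz setting because \descref{aInc$_p$}{aInc$_p$} and \descref{aDec$_q$}{aDec$_q$} yield the required H\"older and modular inequalities for $L^{\varphi(\cdot)}$.

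The main obstacle I foresee is not the pointwise sparse bound, which is essentially a repackaging of the \CZ~analysis of \cite{Bramanti-JMAA-ultraparabolic} via sparse domination of singular integrals on spaces of homogeneous type, but the extrapolation step in the non-Euclidean setting associated to $\mathcal{L}$. The delicate points are \emph{(i)} that Christ's grid is only a substitute for true dyadic cubes and the maximal-function bounds on $L^{\varphi(\cdot)}$ adapted to $d$ require care, and \emph{(ii)} that the structural parameters $p,q,\alpha,\beta$ governing the Orlicz modular must be tracked through extrapolation so that the final constant depends only on the quantities listed in the statement, together with the doubling constant of $d$. Once these ingredients are in place, the remainder of the argument is a structural repetition of the weighted $L^p$ analysis that underlies \cref{sharp-weighted-estimates}.
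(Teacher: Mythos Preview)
Your proposal has the right overall shape but misstates the pointwise sparse bound in a way that hides the central difficulty. The representation formula \cref{represent3-sparse} does \emph{not} take the form
\[
|u_{x_ix_j}| \lesssim \mathscr{A}_{\GG}(\mathcal{L}u) + \mathscr{A}_{\GG}(Ru)
\]
with $Ru$ a lower-order remainder; rather it reads
\[
|u_{x_ix_j}| \lesssim \sum_{k,m} c_{km}|c_{ij}^{km}|\,\mathscr{A}_{\GG_{km}}(\mathcal{L}u)
+ \sum_{h,k=1}^{s_0}\sum_{k,m} c_{km}|c_{ij}^{km}|\bigl(\mathscr{A}_{\GG_{km}}^{a_{hk}} + \mathscr{A}_{\GG_{km}}^{a_{hk},*}\bigr)(u_{x_hx_k}) + |\alpha_{ij}||\mathcal{L}u|,
\]
so the right-hand side contains the Hessian itself through the commutator-type sparse operators $\mathscr{A}_{\GG}^{b}$ and $\mathscr{A}_{\GG}^{b,*}$. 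The role of the $(\delta,\mathfrak{K})$-BMO smallness is precisely that these operators satisfy $\|\mathscr{A}_{\GG}^{b}\|+\|\mathscr{A}_{\GG}^{b,*}\|=O(\delta)$ on the relevant space, allowing the Hessian terms to be absorbed into the left side; this is logically separate from the cut-off localization, which only produces genuinely lower-order contributions. Without this absorption step your bound does not close.

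On method, the paper does not invoke Rubio de Francia extrapolation. Instead it proves the Orlicz bound for $\mathscr{A}_{\GG}$ directly by duality: for $g\in L^{\varphi^*(\cdot)}$ one writes $\int \mathscr{A}_{\GG}f\cdot g \lesssim \int \mathcal{M}f\cdot \mathcal{M}g$ using sparseness, then applies H\"older and the boundedness of $\mathcal{M}$ on both $L^{\varphi(\cdot)}$ and $L^{\varphi^*(\cdot)}$ (which follows from \descref{A0}{A0}, \descref{A1}{A1}, \descref{aInc$_p$}{aInc$_p$}, \descref{aDec$_q$}{aDec$_q$}). The commutator sparse operators are then handled via the pointwise inequality $\mathscr{A}_{\GG}^{b,*}f \leq O(\delta)\,\mathscr{A}_{\GG}(\mathscr{A}_{\tilde\GG}|f|)$ established in \cref{main-sparse-estimates}, so their $O(\delta)$ Orlicz bound follows by two applications of the duality estimate. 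Your extrapolation route would also work in principle, but you would have to extrapolate the commutator sparse bounds separately and verify that the $O(\delta)$ factor survives the extrapolation constants; the paper's direct argument avoids this entirely. Finally, $Yu$ is controlled in the paper simply from $Yu=\mathcal{L}u-\sum a_{ij}u_{x_ix_j}$ once the Hessian is bounded, not via interpolation.
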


\begin{theorem}
\label{main-theorem2}
Suppose that the matrices $A$ and $B$ satisfy \descref{H1}{H1} and \descref{H2}{H2}. There exists $\delta$ such that if $A$ is $(\delta,\mathfrak{K})$-$\text{BMO}$ in the sense of \cref{BMOA} then for any $\Omega'\Subset \Omega\subset\RR^{N+1}$, there exists a positive constant $\kappa$ depending on $p(\cdot), \lambda, \Omega', \Omega$, the matrix $B$ and $\mathfrak{K}$ such that for any $u\in S^{p(\cdot)}(\Omega,\omega)$ with support in $\Omega\cap\{z=(x,t)\in\RR^{N+1}:t>0\}$, we have
\begin{align}
    \|u\|_{S^{p(\cdot)}(\Omega',\omega)}\leq \kappa_{A, B, N, \delta,\gamma_1,\gamma_2,\omega}\left(\|\mathcal{L}u\|_{L^{p(\cdot)}(\Om,\omega)}+\|u\|_{L^{p(\cdot)}(\Om,\omega)}\right)
\end{align} where $L^{p(\cdot)}(\Om,\omega)$ is the variable exponent Lebesgue space such that the variable exponent $p(\cdot)$ is globally log-H\"older continuous and $\omega\in \mathcal{A}_{p(\cdot)}$. The definition of the Sobolev space $S^{p(\cdot)}(\Omega,\omega)$ adapted to the ultraparabolic operator $\mathcal{L}$ may be found in \cref{sobolevL}. The definitions regarding weighted variable exponent Lebesgue spaces may be found in \cref{sparsedomVarLeb}. 
\end{theorem}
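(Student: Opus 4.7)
The plan is to transfer the pointwise sparse domination of $u_{x_{i}x_{j}}$ -- the same tool that drives Theorems~\ref{sharp-weighted-estimates} and \ref{main-theorem} -- into the weighted variable exponent Lebesgue setting by means of Rubio de Francia extrapolation in the $\mathcal{A}_{p(\cdot)}$ scale on the homogeneous space associated with $\mathcal{L}$. No new analytic estimate on $u$ itself is needed; everything is reduced to weighted mapping properties of sparse operators.

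\textbf{Step 1 (sparse pointwise bound).} First I would invoke the sparse representation formula developed for Theorem~\ref{sharp-weighted-estimates}: for each $i,j\in\{1,\dots,s_{0}\}$ there is a sparse family $\mathcal{S}$ of Christ cubes in the homogeneous space $(\mathbb{R}^{N+1},d,|\cdot|)$ associated with $\mathcal{L}$ such that
\[
|u_{x_{i}x_{j}}(z)| \;\lesssim\; \mathcal{A}_{\mathcal{S}}\bigl(|\mathcal{L}u|+|u|\bigr)(z),\qquad \mathcal{A}_{\mathcal{S}} f(z):=\sum_{Q\in\mathcal{S}}\Bigl(\tfrac{1}{|Q|}\int_{Q}|f|\Bigr)\mathbf{1}_{Q}(z),
\]
for almost every $z\in\Omega'$, with implicit constant depending on the BMO-type oscillation of $A$ and on $\Omega',\Omega$.

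\textbf{Step 2 (weighted variable-exponent norm).} Taking the $L^{p(\cdot)}(\Omega,\omega)$ norm reduces the theorem to the uniform sparse bound
\[
\|\mathcal{A}_{\mathcal{S}} f\|_{L^{p(\cdot)}(\Omega,\omega)}\;\le\; C\,\|f\|_{L^{p(\cdot)}(\Omega,\omega)},
\]
with $C$ independent of $\mathcal{S}$, valid for every globally log-H\"older continuous exponent $p(\cdot)$ bounded away from $1$ and $\infty$ and every $\omega\in\mathcal{A}_{p(\cdot)}$. Summing over $i,j\in\{1,\dots,s_{0}\}$, combining with the obvious $\|u\|_{L^{p(\cdot)}(\omega)}$ control, and using the definition of $S^{p(\cdot)}(\Omega,\omega)$ from \cref{sobolevL}, this yields the conclusion.

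\textbf{Step 3 (extrapolation).} The sparse bound of Step~2 would be obtained by Rubio de Francia extrapolation in the $\mathcal{A}_{p(\cdot)}$ framework over spaces of homogeneous type. The uniform Muckenhoupt bound
\[
\|\mathcal{A}_{\mathcal{S}} f\|_{L^{p}(\omega)}\le C\,[\omega]_{\mathcal{A}_{p}}^{\max\{1,1/(p-1)\}}\|f\|_{L^{p}(\omega)},\qquad 1<p<\infty,\ \omega\in\mathcal{A}_{p},
\]
is precisely the core input used to derive Theorem~\ref{sharp-weighted-estimates}, and the extrapolation machinery transfers it to $L^{p(\cdot)}(\omega)$ once the Hardy--Littlewood maximal function associated with the metric $d$ is bounded on $L^{p(\cdot)}(\omega)$ -- which is exactly what the globally log-H\"older regularity of $p(\cdot)$ together with $\omega\in\mathcal{A}_{p(\cdot)}$ guarantees.

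\textbf{Main obstacle.} The principal technical point will be adapting the weighted variable Lebesgue theory -- boundedness of the maximal operator, definition and self-improvement of $\mathcal{A}_{p(\cdot)}$, and Rubio de Francia extrapolation -- to the space of homogeneous type $(\mathbb{R}^{N+1},d,|\cdot|)$ dictated by the anisotropic KFP dilation structure, rather than the Euclidean one. In particular, one must verify that log-H\"older continuity measured with respect to the KFP metric $d$ is the correct regularity on $p(\cdot)$, and that the dyadic lattice of Christ cubes is compatible with the variable-exponent modular. These ingredients are precisely those set up in \cref{sparsedomVarLeb}; once they are in hand, the chain Step~1 $\Rightarrow$ Step~3 $\Rightarrow$ Step~2 is essentially automatic, and the estimate of the theorem follows.
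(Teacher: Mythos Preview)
Your Step~1 misstates the pointwise sparse bound that the paper actually proves, and this is not a cosmetic issue. The representation formula \cref{represent3-sparse} does \emph{not} give $|u_{x_ix_j}|\lesssim \mathcal{A}_{\mathcal{S}}(|\mathcal{L}u|+|u|)$; it gives
\[
|u_{x_ix_j}|\ \lesssim\ \sum \mathscr{A}_{\GG}(\mathcal{L}u)\ +\ \sum_{h,k}\Bigl(\mathscr{A}_{\GG}^{a_{hk}}(u_{x_hx_k})+\mathscr{A}_{\GG}^{a_{hk},*}(u_{x_hx_k})\Bigr)\ +\ |\alpha_{ij}|\,|\mathcal{L}u|,
\]
so the second derivatives reappear on the right-hand side through the commutator-type sparse operators. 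There is no $|u|$ term at this stage. The role of the $(\delta,\mathfrak{K})$-BMO hypothesis is precisely to make $\|\mathscr{A}_{\GG}^{a_{hk}}\|+\|\mathscr{A}_{\GG}^{a_{hk},*}\|=O(\delta)$ on whatever Banach space one is working in, so that these terms can be \emph{absorbed at the norm level} (not pointwise) into the left-hand side after choosing $\delta$ small. Your proposal skips this absorption step entirely, and it cannot be bypassed: without it there is no closed estimate, only a self-referential inequality.

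Once you fix Step~1, your Step~3 via Rubio de Francia extrapolation is a legitimate alternative to what the paper does. The paper instead proves the $L^{p(\cdot)}(\omega)$ bounds for $\mathscr{A}_{\GG}$, $\mathscr{A}_{\GG}^{b}$, $\mathscr{A}_{\GG}^{b,*}$ \emph{directly} (\cref{sparse-estimates-variable}) by a duality argument that reduces everything to the maximal function bound of \cref{maxboundvar}; this has the advantage of being self-contained and of making the $O(\delta)$ dependence for the commutator-sparse operators transparent. Extrapolation would also deliver this, but you would still need to track the $O(\delta)$ factor through the extrapolation machinery, and you have not indicated how. Finally, you omit the localization and covering argument (cut-off functions, interpolation lemma, covering $\Omega'$ by small balls) that produces the $\|u\|_{L^{p(\cdot)}(\Omega,\omega)}$ term and passes from compactly supported $u$ in a small ball to general $u$ on $\Omega'\Subset\Omega$; this is where the lower-order term in the conclusion actually comes from, and also where the $\|Yu\|$ part of the $S^{p(\cdot)}$ norm is handled via $Yu=\mathcal{L}u-\sum a_{ij}u_{x_ix_j}$.
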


We write down the particulars of our approach below:
\begin{enumerate}
    \item Using the fundamental solution of the frozen operator \cref{maineq2}, we obtain a pointwise representation of the hessian ($u_{x_i x_j}$) for any test function \cref{represent} in terms of (i) a singular operator acting on $\mathcal{L}u$, and, (ii) a commutator of the same singular operator with the coefficients $a_{ij}$ acting on the hessian of $u$.
    \item In a second step, the variable kernel singular operator is expanded in spherical harmonics (\cite{CaldZyg57}, \cite{Bramanti-JMAA-ultraparabolic}) to obtain a second representation formula \cref{represent3}.
    \item At this point, our approach departs from that of \cite{Bramanti-JMAA-ultraparabolic}. We first prove endpoint boundedness of the singular operators (weak-type $L^1$) and more importantly, the appropriate end-point boundedness of the associated grand truncated maximal operator.
    \item We derive pointwise estimates for the singular integrals and the commutators in terms of some positive dyadic operators, known as ``sparse operators".
    \item Combining the above, we obtain a further representation formula \cref{represent3-sparse} for the hessian of $u$ in terms of sparse operators which forms our basis for a unified approach to gradient estimates. As mentioned in the work \cite{Bui-IMRN-parabolic}, this approach indeed improves all the known estimates and have further implications in the regularity theory of PDEs.
\end{enumerate}

\medskip


\subsection{History of the problem}

\KFP equations arise in a variety of physical models in mathematical finance \cite{Barles1997,Anceschi2021} among other stochastic models, kinetic theory of gases \cite{PLLions1994,Alexandre2004}, clusters in space \cite{Chandra1943}, and image processing \cite{Mumford1994,AugZuck2003}. A variety of other applications are given in the books \cite{Risken1989,Honerkamp1994,GrasvanHer1999}.

On the mathematical side of things, Kolmogorov \cite{Kolmo1934} studied a simpler equation as a model for Brownian motion and wrote down its fundamental solution. Generalized models were considered in \cite{Weber1951,Ilin1964}. These studies motivated H\"ormander to build the notion of hypoellipticity in \cite{hormander67} where he finds a sufficient condition for an operator $\mathcal{L}$ to be hypoelliptic in terms of a rank condition on the Lie Algebra generated by the distinct first order differential operators in $\mathcal{L}$ now interpreted as vector fields. 
Subsequently, Stein laid out a program of studying hypoelliptic operators by means of left invariant homogeneous operators on  nilpotent graded
Lie groups \cite{Stein1971} which resulted in such deep work as \cite{RothStein1976}, \cite{Folland1975}, and \cite{NaStWa1985}. Such analysis has found applications in spectral theory of Fokker-Planck type operators with connections to semiclassical analysis \cite{HelNi2005}.

\subsection{Regularity theory of \KFP equations}
The regularity theory for equations in nondivergence form is well-developed in the case when the coefficients are H\"older continuous. In the particular case of \KFP operators, fundamental solution is constructed by Levi's method of parametrices in \cite{Poli1994}. A solution to Dirichlet problem in the Perron-Wiener sense as well as Schauder estimates are obtained in \cite{Manfre1997}. Cauchy problem is studied in \cite{Tersenov2005}. Schauder estimates, Harnack inequality and Gaussian lower bounds are studied in \cite{DiFranPoli2006}. For less regular coefficients, the regularity theory is much less well developed. For example, the Krylov-Safanov theory for measurable coefficients is missing. Recently, Harnack's inequality has been proved for less regular coefficients in \cite{Abedin2019}.

The \CZ\,theory has had an important contribution to the  $W^{2,p}$-existence theory for equations in nondivergence form. An early work consisting of an existence result involving discontinuous coefficients was the work of Miranda \cite{Miranda1963} where solutions in $W^{2,2}\cap W^{1,2}_0$ were found for coefficients in $W^{1,N}$. This work can be strengthened by asking for derivatives of coefficients to belong to weak $L^N$. With the advent of viscosity solutions, Caffarelli extended the notion to $C^{1,\alpha}, C^{2,\alpha}$ and $W^{2,p}$-versions in \cite{Caff1989}. However, these works were restricted to $p>N-\ve$ due to the application of the Aleksandrov-Bakel'man-Pucci maximum principle. In the important work of Chiarenza et al \cite{Chiarenza1991,Chiarenza1993}, $W^{2,p}$-existence results were proved for the full range $1<p<\infty$ through a representation formula. Indeed these methods were generalized to parabolic equations in \cite{BramCer1993} and for ultraparabolic operators in \cite{Bramanti-JMAA-ultraparabolic}, from where we borrow the representation formula. Krylov \cite{Krylov2007} removed the dependence on the representation formula by exploiting the sharp maximal function of the second derivatives which allows for generalizations to fully nonlinear settings. Other than these approaches, there is also, what is called as, the maximum function free approach of Acerbi and Mingione \cite{AceMin2007} which has been developed for equations in non-divergence form in \cite{Yao2012}. 

The results concerning hypoelliptic operators are local in nature, for example, see \cite{Bramanti-JMAA-ultraparabolic,BramZhu2013}. An analysis at the boundary would require the development of the notion of {\it characteristic points} in this context since boundary regularity is expected to fail at such points as noticed by Jerison \cite{Jerison1981I,Jerison1981II} for other models.

 On the other hand, regularity theory for divergence-form variants of \KFP equations is much well-developed. We point to the survey \cite{PolidoroSurvey2020} for references to these works. For boundary behaviour of non-negative solutions of \KFP we refer the foundational works \cite{nystrom-2010, nystrom-2012, nystrom-2016}. A recent contribution to a nonlinear \KFP model is \cite{PrashantaNyst2022}.

\section{Preliminaries and Notations}
\subsection{Geometry of the problem}
We recall the following translation and dilation structures in the space $\RR^{N+1}$ intrinsic to the constant matrix $B$.

\begin{definition}[The group of translations]\label{translation}
Let $F(\tau)=exp(-\tau B^T)$. Since the matrix $B$ is a nilpotent matrix,  $F(\tau)$ is a polynomial of degree $B$ in $\tau$. For $(x, t), (y, \tau) \in \RR^{N+1}$, set $$(x, t)\circ
(y, \tau)=(y+F(\tau)x, t+\tau).$$ Then $(\RR^{N+1}, \circ)$ is a group
with identity element $(0,0)$; the inverse of an element $(x, t)$ is
$(x, t)^{-1}=(-F(-t)x, -t)$. The left translation by $z'$
given by
$$z\mapsto z'\circ z,$$
is a invariant translation to operator $\mathcal{L}_0$.
\end{definition}

\begin{definition}[Dilations]
The compatible dilation to operator $\mathcal{L}_0$ is given by
$$
\delta_{r}=diag(r I_{s_0}, r^3
I_{s_1},\cdots, r^{2d+1}I_{s_d}, r^{2}),
$$
where $I_{s_k}$ denotes the $s_k\times s_k$ identity matrix. With respect to this dilation the operator $\mathcal{L}_0$ satisfies the following
\begin{align*}
\mathcal{L}_0(u(\delta_{r}))=r^{2} \delta_{r}(\mathcal{L}_0(u)),     
\end{align*}
therefore, $\mathcal{L}_0$ is a homogeneous operator of degree 2. Let
$$Q=s_0+3s_1+\cdots+(2d+1)s_d.$$
The quantity $Q+2$ is known as the homogeneous dimension of $\RR^{N+1}$ with respect to the dilation $\delta_{r}$.
\end{definition}

\begin{definition}[Homogeneous norm]\label{homnorm}
The norm in $\RR^{N+1}$, related to the group of translations and
dilation to the equation is defined by $$||(x, t)||=\rho$$ if $\rho$ is the
unique positive solution to the equation
$$
\fr{x_1^2}{\rho^{2 a_1}}+\fr{x_2^2}{\rho^{2 a_2}}+\cdots+\fr{x_N^2}{\rho^{2 a_N}}
+\fr{t^2}{\rho^4}=1,
$$
where $(x, t) \in \RR^{N+1}\setminus \{0\}$ and
$$
a_1=\cdots=a_{s_0}=1, \quad
a_{s_0+1}=\cdots=a_{s_0+s_1}=3,\cdots,
$$
$$
a_{s_0+\cdots+s_{d-1}+1}=\cdots=a_N=2d+1,
$$
and $||(0,0)||=0$. 
\end{definition}

The homogeneous norm defined above satisfies the following property. 

\begin{lemma}(\cite[Prop. 1.3]{Bramanti-JMAA-ultraparabolic})\label{orderbeta}There exists $\beta=\beta(d)\in (0,1]$, and constants $\Tilde{K},\, M$ depending on the matrix $B$ such that for every $z,\eta,\zeta\in \RR^{N+1}$ satisfying $\|\eta^{-1}\circ z\|\geq M\|\zeta^{-1}\circ z\|$, it holds that
\begin{align}\label{order1}\|\eta^{-1}\circ z - \eta^{-1}\circ \zeta\|\leq \tilde{K}\|\zeta^{-1}\circ z\|^\beta\|\eta^{-1}\circ z\|^{1-\beta},\end{align}
\begin{align}\label{order2}\|z^{-1}\circ \eta - \zeta^{-1}\circ \eta\|\leq \tilde{K}\|\zeta^{-1}\circ z\|^\beta\|\eta^{-1}\circ z\|^{1-\beta},\end{align}
\end{lemma}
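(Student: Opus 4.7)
The plan is to exploit the left-invariance and dilation-homogeneity of the group $(\RR^{N+1},\circ)$ to reduce the estimates to a normalised case, then expand the group law explicitly and read off the Hölder exponent $\beta$ from the block structure of $B$. First, set $w=\eta^{-1}\circ z$ and $v=\eta^{-1}\circ\zeta$. Using associativity and the formula for inverses in \cref{translation}, one checks $\zeta^{-1}\circ z=v^{-1}\circ w$, $z^{-1}\circ\eta=w^{-1}$, and $\zeta^{-1}\circ\eta=v^{-1}$, so \eqref{order1} and \eqref{order2} become
\[\|w-v\|\leq\tilde K\|v^{-1}\circ w\|^{\beta}\|w\|^{1-\beta},\qquad \|w^{-1}-v^{-1}\|\leq\tilde K\|v^{-1}\circ w\|^{\beta}\|w\|^{1-\beta},\]
under the hypothesis $\|w\|\geq M\|v^{-1}\circ w\|$. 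A short computation verifying $r^{2}D_{r}^{-1}B^{T}D_{r}=B^{T}$ (from the $(k,k-1)$-block form of $B$ and the explicit powers of $r$ in $\delta_{r}$) shows that each $\delta_{r}$ is a group automorphism; since $\delta_{r}$ is linear and $\|\delta_{r}y\|=r\|y\|$, I can rescale so that $\|w\|=1$. Writing $g=v^{-1}\circ w$, the task is to show $\|w-v\|\leq\tilde K\|g\|^{\beta}$ and $\|w^{-1}-v^{-1}\|\leq\tilde K\|g\|^{\beta}$ whenever $\|g\|\leq 1/M$.

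For the first inequality, use $w=v\circ g$ together with the group law to obtain
\[w-v=\bigl((I-F(-g_{t}))w_{x}+F(-g_{t})g_{x},\;g_{t}\bigr).\]
Since $B^{T}$ is nilpotent and its only nonzero blocks send the block of index $\ell$ to block $\ell-1$, the matrix $(B^{T})^{k}$ couples block $\ell$ to block $\ell-k$ only, and $F(-g_{t})-I=\sum_{k=1}^{d}g_{t}^{k}(B^{T})^{k}/k!$ is a finite polynomial. Using $|g_{t}|\leq\|g\|^{2}$, the component bound $|g_{x}|_{m}\leq\|g\|^{a_{m}}$ from \cref{homnorm}, and $|w_{x}|_{m}\leq\|w\|^{a_{m}}=1$, a coordinate-wise estimate shows that for $j$ in block $\ell$ one has $|(w-v)_{j}|\lesssim\|g\|^{2}+\|g\|^{2\ell+1}\lesssim\|g\|^{2}$, while $|g_{t}|\leq\|g\|^{2}$. (One also needs $\|v\|\lesssim 1$ to absorb the $|w_{x}|$ factor properly; this is obtained from $\|v\|\leq C(\|w\|+\|g\|)\leq C(1+1/M)$ by choosing $M$ large relative to the constants in the quasi-triangle inequality for $\|\cdot\|$, and from the symmetry $\|g^{-1}\|\asymp\|g\|$.)

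Converting these componentwise bounds back to the homogeneous norm via $\|y\|\asymp\max_{j}|y_{j}|^{1/a_{j}}$, the worst block $\ell=d$ gives $|(w-v)_{j}|^{1/a_{j}}\lesssim\|g\|^{2/(2d+1)}$, so that \eqref{order1} holds with $\beta=\frac{2}{2d+1}\in(0,1]$, which depends only on $d$ as asserted. For \eqref{order2}, the inversion map $y\mapsto y^{-1}=(-F(-y_{t})y_{x},-y_{t})$ is a $\delta_{r}$-homogeneous polynomial diffeomorphism, and writing
\[w^{-1}-v^{-1}=\bigl(-F(-w_{t})w_{x}+F(-v_{t})v_{x},\;v_{t}-w_{t}\bigr)\]
and substituting $v_{t}=w_{t}-g_{t}$ and $v_{x}=F(-g_{t})(w_{x}-g_{x})$, the same polynomial expansion of $F$ gives the analogous componentwise bound; alternatively, one can repeat the argument of Steps 1--4 with the roles of left and right translation interchanged (using that both $\|\cdot\|$ and the group law behave symmetrically under $y\mapsto y^{-1}$ up to harmless constants).

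The main obstacle is the careful bookkeeping in the coordinate-wise estimate: the dependence of $\beta$ on the highest block is dictated by exactly which cross-terms in $F(g_{t})$ survive the dilation rescaling, and one must verify that $\|v\|$ stays bounded after normalising $\|w\|=1$, which forces $M$ to be chosen in terms of the (quasi)-triangle constants of $\|\cdot\|$ and the operator norms of the iterated matrices $(B^{T})^{k}$. Once those constants are tracked, the Hölder exponent $\beta=2/(2d+1)$ falls out and both \eqref{order1} and \eqref{order2} follow by a common argument.
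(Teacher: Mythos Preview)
The paper does not supply its own proof of this lemma: it is quoted verbatim from \cite[Prop.~1.3]{Bramanti-JMAA-ultraparabolic} and used as a black box thereafter. So there is nothing in the present paper to compare your argument against.

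That said, your proposal is the standard route and is essentially correct. The reduction via left-invariance to the variables $w,v$ with $g=v^{-1}\circ w$, the verification that $\delta_r$ is a group automorphism (your identity $r^{2}D_{r}^{-1}B^{T}D_{r}=B^{T}$ is exactly the right computation), the normalisation $\|w\|=1$, and the coordinatewise expansion of $F(-g_t)-I$ using nilpotency of $B^T$ all check out. Your conversion back to the homogeneous norm through $\|y\|\asymp\max_j|y_j|^{1/a_j}$ is a known equivalence and gives the exponent $\beta=2/(2d+1)$, which agrees with the value implicit in the cited reference. One small remark: in your chosen expression $(I-F(-g_t))w_x+F(-g_t)g_x$ you only need $\|w\|=1$, not $\|v\|\lesssim 1$, so the parenthetical about bounding $\|v\|$ is unnecessary for \eqref{order1} (though harmless, and it would be needed had you written the difference in terms of $v_x$ instead). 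The treatment of \eqref{order2} by either repeating the expansion for the inversion map or invoking the quasi-symmetry $\|y^{-1}\|\asymp\|y\|$ is also fine.
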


A homogeneous space satisfying \cref{orderbeta} is called a space of order $\beta$
.
\begin{definition}[Homogeneous metric]
Let $d$ denotes the metric induced by the norm $||\cdot||$, i.e., $d(z, \xi)=||\xi^{-1}\circ z||$, then $d$ is a quasimetric. We refer Proposition~1.3 in \cite{Bramanti-JMAA-ultraparabolic} for more properties of $d$. Also, we remark that the metric $d$ is not symmetric but quasi-symmetric i.e. there exist a constant $\tilde{D}$ depending on the matrix $B$ such that $$\frac{1}{\tilde{D}}\, d(z, \xi)\leq \Tilde{d}(\xi, z)\leq \tilde{D}\ d(z, \xi).$$ We define the new metric, $\Tilde{d}(z, \xi)=d(z, \xi)+d(\xi, z)$, which is comparable to the metric $d$ and is symmetric. Therefore, here on wards we assume the metric $d$ to be symmetric. 

Therefore the space $(\RR^{N+1}, d, |\cdot|)$ is a metric space of homogeneous type in the sense of Coifman and Weiss (see \cite{Coifman-Weiss-book-1971}) .  
\end{definition}

\begin{definition} A ball centered at $z=(x,t)\in\RR^{N+1}$ is defined as \begin{align}\label{balldef}B_r(z)=\{w\in \RR^{N+1}:d(z,w)<r\}.\end{align}
\end{definition}
\begin{definition}[BMO spaces]
A function $b\in L^1_{\text{loc}}(\RR^{N+1})$ is said to be in BMO if 
\begin{align*}
    ||b||_{\text{BMO}}:= \sup\left\{\frac{1}{|\BB|}\int\limits_{\BB}|b(z)-(b)_{\BB}|\,dz: \BB\text{ is a ball in }\RR^{N+1}\right\}<\infty,
\end{align*}
\end{definition}        
where $(b)_{\BB}$ denotes the mean of $b$ over the set $\BB$, that is \[(b)_{\BB}:=\fint\limits_{\BB} b(z)\,dz=\frac{1}{|\BB|}\int\limits_{\BB} b(z)\,dz,\] and $L^1_{\text{loc}}(\RR^{N+1})$ has the usual meaning of the function space of locally integrable functions with respect to the Lebesgue measure. We also use the notation $\Omega_{\BB}(b)$ to denote the oscillation $\frac{1}{|\BB|}\int\limits_{\BB}|b(z)-(b)_{\BB}|$.
\begin{definition}\label{BMOA}
For given $\mathfrak{K}>0$ and $\delta>0$, we say that a matrix $A$ whose entries are $L^1_{\text{loc}}(\RR^{N+1})$ functions is $(\delta,\mathfrak{K})-BMO$ if 
\begin{align*}
    \sup\left\{ \fint\limits_{\BB_\rho} |A(z)-(A)_\rho|\,dz: \BB_\rho \mbox{ is a ball of radius $\rho$ in }\RR^{N+1}, 0<\rho<\mathfrak{K} \right\}<\delta,
\end{align*}
\end{definition}
where $(A)_{\rho}$ denotes the mean of $A$ over a ball $\BB_\rho$ of radius $\rho$, that is \[(A)_{\rho}:=\fint\limits_{\BB_\rho} A(z)\,dz=\frac{1}{|\BB_\rho|}\int\limits_{\BB_\rho} A(z)\,dz.\]

\begin{definition}\label{sobolevL}
For $\Omega\subset\RR^{N+1}$, an open set, we define a class of Sobolev-type space \[\SSS^X(\Omega,\mathcal{L})=\{u\in X(\Omega):u_{x_i},u_{x_ix_j},Yu\in X(\Omega),i,j=1,2,\ldots,s_0\},\] where $Yu=\left(\sum_{i,j=1}^N b_{ij}x_i\partial_{x_j}-\partial_t\right)u$ and $X(\Omega)$ is one of the following Banach function spaces: $X(\Omega)={L^{p(\cdot)}_w(\Omega)}$ or ${L^{\varphi(\cdot)}(\Omega)}$, whose definitions are given in \cref{sparsedomVarLeb} and \cref{SparseDomGenOrc} respectively. On the space $\SSS^X(\Omega,\mathcal{L})$, we define the norm 
\[||u||_{\SSS^X(\Omega,\mathcal{L})}=||u||_{X(\Omega)}+\sum_{i=1}^{s_0}||u_{x_i}||_{X(\Omega)}+\sum_{i,j=1}^{s_0} ||u_{x_ix_j}||_{X(\Omega)}+||Yu||^p_{X(\Omega)}.\] 
We may also sometimes use the notation $S^X(\Omega,d,|\cdot|)$ or $S^X(\Om)$. In the specific case of $X=L^{p(\cdot)}_{\omega}(\Omega)$, we denote $\SSS^X(\Omega,\mathcal{L})$ by $\SSS^{p(\cdot)}_{\omega}(\Omega)$ and when $X=L^{\varphi(\cdot)}(\Omega)$, we denote $\SSS^X(\Omega,\mathcal{L})$ by $\SSS^{\varphi(\cdot)}(\Omega)$.
\end{definition}

A complete exposition of Cald\'eron-Zygmund theory on homogeneous spaces is given in the book of Deng and Han \cite{DengHan2009}. For our purposes, we shall require the Cald\'eron-Zygmund type decomposition adapted to $(\RR^{N+1}, d, |\cdot|)$.

\begin{theorem}(Cald\'eron-Zygmund type lemma \cite{DengHan2009,AimarTAMS})\label{caldzygdec}
Let $f$ be a nonnegative integrable function and $\lambda>0$ be any height. Then there exist functions $h_i$ and balls $\BB_i$ such that the sets $S_i=\{x\in \RR^{N+1}: h_i(x)\neq 0\}$ are pairwise disjoint and 
\begin{align*}
    &g=f-\sum_i h_i \in L^1\cap L^\infty\mbox{ and }||g||_{L^\infty}\leq \tilde D \lambda,\\
    &\int\limits_{\RR^{N+1}} h_i(x)\,dx=0,\\
    &S_i\subset \BB_i,\,\,\text{and}\,\,\sum_i |\BB_i|\leq \tilde D\lambda^{-1}||f||_{L^1},
\end{align*} where the constant $\tilde D$ only depends on the homogeneous space.
\end{theorem}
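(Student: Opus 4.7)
The plan is to adapt the classical Calderón-Zygmund stopping-time argument to the homogeneous space $(\RR^{N+1}, d, |\cdot|)$, using a system of Christ's dyadic cubes in place of Euclidean cubes. The key background facts used are: (i) the Lebesgue measure on $\RR^{N+1}$ is doubling with respect to the quasi-metric $d$ (this follows from the behaviour of $|\cdot|$ under the dilations $\delta_r$, since $|\delta_r E| = r^{Q+2}|E|$); (ii) Christ's dyadic construction is available in any doubling quasi-metric measure space; and (iii) the Lebesgue differentiation theorem holds on spaces of homogeneous type.

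First, fix a Christ dyadic system $\mathscr{D} = \bigcup_k \mathscr{D}_k$ on $(\RR^{N+1}, d, |\cdot|)$, so that each $Q \in \mathscr{D}_k$ satisfies sandwich inclusions $B(z_Q, c\delta^k) \subset Q \subset B(z_Q, C\delta^k)$ for universal constants $c, C, \delta$ depending only on the homogeneous structure. Since $f \in L^1$, the averages $|Q|^{-1}\int_Q f$ tend to $0$ as $|Q| \to \infty$; hence for every $z$ with $\sup_{Q \ni z,\, Q \in \mathscr{D}} |Q|^{-1}\int_Q f > \lambda$ there exists a \emph{maximal} dyadic cube $Q_i \ni z$ with $|Q_i|^{-1}\int_{Q_i} f > \lambda$. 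The family $\{Q_i\}$ of maximal such cubes is pairwise disjoint. By maximality, the dyadic parent $\widehat{Q}_i$ satisfies $|\widehat{Q}_i|^{-1}\int_{\widehat{Q}_i} f \leq \lambda$; since $\widehat{Q}_i \subset B(z_{Q_i}, C\delta^{k-1})$ while $Q_i \supset B(z_{Q_i}, c\delta^k)$, the doubling property implies $|Q_i|^{-1}\int_{Q_i} f \leq C_{\mathrm{doub}}\lambda$. Choose a ball $\BB_i \supset Q_i$ with $|\BB_i| \leq C'|Q_i|$.

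Now set $h_i := (f - f_{Q_i})\chi_{Q_i}$ so that $S_i = Q_i \subset \BB_i$, and $g := f - \sum_i h_i$. The vanishing mean condition $\int_{\RR^{N+1}} h_i\,dz = 0$ is immediate. On each $Q_i$, $g = f_{Q_i}$, which is bounded by $C_{\mathrm{doub}}\lambda$; on the complement $\RR^{N+1} \setminus \bigcup_i Q_i$, the Lebesgue differentiation theorem gives $|f(z)| = \lim_{Q \ni z} |Q|^{-1}\int_Q f \leq \lambda$ for almost every $z$, by the very definition of the maximal stopping cubes. Combining, $\|g\|_{L^\infty} \leq C_{\mathrm{doub}}\lambda$, and $g \in L^1$ is automatic since $g$ differs from $f$ by $\sum_i h_i$, which has $L^1$-norm at most $2\|f\|_{L^1}$. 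Finally, the stopping-time inequality sums to $\sum_i |Q_i| \leq \lambda^{-1}\sum_i \int_{Q_i} f \leq \lambda^{-1}\|f\|_{L^1}$, whence $\sum_i |\BB_i| \leq C'\lambda^{-1}\|f\|_{L^1}$.

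The main technical point, rather than any single estimate, is the bookkeeping: the Christ-grid parameters $c, C, \delta$, the doubling constant $C_{\mathrm{doub}}$, and the cube-to-ball inflation constant $C'$ must be absorbed into a single constant $\tilde{D}$ depending only on the homogeneous space. Since each of these constants is intrinsic to $(\RR^{N+1}, d, |\cdot|)$ (and hence ultimately only to the matrix $B$ and the dimension), this bundling is legitimate and completes the proof. As an aside, an alternative approach would follow Aimar's paper \cite{AimarTAMS} directly by applying a Whitney-type decomposition to the level set $\{Mf > \lambda\}$ of the Hardy--Littlewood maximal function; the dyadic approach sketched above is slightly cleaner because the vanishing-mean property falls out without the need for a partition of unity.
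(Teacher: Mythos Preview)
The paper does not supply its own proof of this statement; it is quoted as a known result with citations to \cite{DengHan2009,AimarTAMS}. Your argument is correct and is essentially the standard dyadic stopping-time construction that underlies those references, so there is nothing to compare against in the paper itself. One cosmetic point: you write $S_i = Q_i$, but strictly $S_i = \{h_i \neq 0\} \subset Q_i$; this does not affect any of the conclusions since only the inclusion $S_i \subset \BB_i$ and the pairwise disjointness (inherited from that of the $Q_i$) are needed.
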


\section{Representation Formula}

\subsection{Fundamental solution}
The operator $\mathcal{L}_{0}$ is hypoelliptic, for example, see \cite{hormander67}. Let $\Gamma_0$ denotes its fundamental solution with pole at zero. We recall the following expression for $\Gamma_0$ as derived in \cite{hormander67,Kup72,LAnPol94}.

Let $A(z)=(a_{ij}(z))_{s_0\times s_0}$ denote the $s_0\times s_0$ matrix that features in \eqref{maineq}. Let $A_0$ denote the $N\times N$ constant matrix defined as
\[A_0:=
\begin{pmatrix}
  A(z_0)
   & \bigzero \\
  \bigzero & \bigzero
\end{pmatrix},
\] where each $\bigzero$ represents a block matrix with zeros.
Recall the definiton of the matrix polynomial $F(s)$ from \cref{translation}. Let
$$C(\tau, z_{0})=\int_{0}^{\tau}F(s)A_{0}F^{t}(s)\ ds.$$ Then, the fundamental solution of $\mathcal{L}_0$ with pole at zero is
\begin{align*}
\Gamma_0(z)=\Gamma_0(x, t) = \frac{1}{(4\pi)^{\frac N2}\left(\det C(t,z_0)\right)^{\frac 12}}\exp\left(-\frac 14 \langle C^{-1}(t,z_0)x,x\rangle\right) 
\end{align*}
The fundamental solution $\Gamma_0(\cdot,\zeta)$
of $L_0$ with pole in $\zeta\in \RR^{N+1}$ is defined as
$$
\Gamma_0(z,\zeta)=\Gamma_0(\zeta^{-1}\circ z), \qquad z, \zeta
\in \RR^{N+1},\quad z \neq \zeta.
$$

We collect here all the relevant properties of $\Gamma$ as proved in \cite{Bramanti-JMAA-ultraparabolic}.

\begin{theorem}(\cite[Theorem 2.2]{Bramanti-JMAA-ultraparabolic})\label{propGreen}
Let $z_0\in \RR^{N+1}$ and define $\Gamma(z_0,\cdot)\equiv \Gamma^0(\cdot)$. Then
\begin{itemize}
    \item[(i)] $\Gamma^0(\cdot)\in C^\infty(\RR^{N+1}\setminus\{0\})$.
    \item[(ii)] $\Gamma^0(\cdot)$ is $\delta_r$-homogeneous of degree $-Q$. 
    \item[(iii)] The function $\Gamma^0_i=\partial_{x_i}\Gamma^0$ is $\delta_r$-homogeneous of degree $-Q-a_i$ for $i=1,2,\ldots,N+1$.
    \item[(iv)] The function $\Gamma^0_{ij}=\partial_{x_ix_j}\Gamma^0$ is $\delta_r$-homogeneous of degree $-Q-a_i-a_j$ for $i=1,2,\ldots,N+1$.
    \item[(v)] The following estimates hold 
            \begin{align*}
                |\Gamma_i^0(z)|&\leq \frac{C}{||z||^{Q+a_i}},\nonumber\\
                |\Gamma_{ij}^0(z)|&\leq \frac{C}{||z||^{Q+a_i+a_j}},\mbox{ for }i,j=1,2,\ldots,N+1,
            \end{align*} where $\displaystyle C=\max\left\{ \sup_{S^N}|\Gamma_i^0(z)|, \sup_{S^N}|\Gamma_{ij}^0(z)|,i,j=1,2,\ldots,N+1\right\}$.
    \item[(vi)] For every $m\in \NN$, $z\in\RR^{N+1}$ and a multiindex $\beta$, it holds that
    \begin{align*}
        \sup_{||\zeta||=1,|\beta|=2m}\left|\left(\frac{\partial}{\partial \zeta}\right)^{\beta}\Gamma_{ij}^0(z;\zeta)\right|\leq C(m,\mu,B),
    \end{align*} for all $z\in\RR^{N+1}$.
    \item[(vii)] It holds that
    \begin{align*}
        \int\limits_{a<||\zeta||<b} \Gamma_{ij}^0(\zeta)d\zeta=0=\int\limits_{||\zeta||=1}\Gamma^0_{ij}(\zeta)\,d\sigma(\zeta),
    \end{align*} for $i,j=1,2,\ldots,s_0$ and $0<a<b$.
\end{itemize}
\end{theorem}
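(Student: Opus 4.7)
The plan is to read everything off the explicit Gaussian formula
$$\Gamma^0(x,t)=\frac{1}{(4\pi)^{N/2}(\det C(t,z_0))^{1/2}}\exp\left(-\tfrac14\langle C^{-1}(t,z_0)x,x\rangle\right)\quad (t>0),$$
extended by zero for $t\leq 0$, and to exploit the interaction of $C(t,z_0)$ with the anisotropic dilation $\delta_r$. First I would establish the key scaling identity $C(r^2 t,z_0)=D(r)\,C(t,z_0)\,D(r)^T$ where $D(r)=\operatorname{diag}(r^{a_1},\ldots,r^{a_N})$ is the spatial part of $\delta_r$; this follows by changing variable $s\mapsto r^2 s$ in the defining integral and using the commutation relation $F(r^2 s)D(r)=D(r)F(s)$, which in turn comes from the block structure of $B$ in \descref{H2}{H2}. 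A consequence is $\det C(r^2 t,z_0)=r^{2Q}\det C(t,z_0)$ and $\langle C^{-1}(r^2 t,z_0)D(r)x,D(r)x\rangle=\langle C^{-1}(t,z_0)x,x\rangle$.

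With this in hand, (i) is immediate: \descref{H1}{H1} guarantees $C(t,z_0)$ is strictly positive definite for all $t>0$ with smooth dependence on $t$, so the formula produces a $C^\infty$ function off the origin. Statement (ii) is obtained by substituting $\delta_r(x,t)$ into the formula and applying the two scaling identities above; the prefactor picks up $r^{-Q}$ and the exponent is invariant. For (iii) and (iv), differentiation with respect to $x_i$ under the dilation $\delta_r$ costs a factor $r^{-a_i}$ (since $\delta_r$ rescales $x_i$ by $r^{a_i}$), so $\partial_{x_i}$ and $\partial_{x_ix_j}$ lower the degree of homogeneity by $a_i$ and $a_i+a_j$ respectively.

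Estimate (v) follows by combining continuity with homogeneity: both $\Gamma^0_i$ and $\Gamma^0_{ij}$ are continuous on the compact unit sphere $S^N=\{||z||=1\}$, hence attain finite maxima there; for arbitrary $z\neq 0$ one writes $z=\delta_{||z||}\omega$ with $\omega\in S^N$ and reads off the bound from the degree of homogeneity. For (vi), the dependence on $\zeta$ enters only through $A_0=A(\zeta)$ inside $C(t,\zeta)$; since \descref{H1}{H1} gives uniform ellipticity, one obtains $\det C(t,\zeta)\geq c(\lambda)t^Q$ with a constant independent of $\zeta$, and all $\zeta$-derivatives of $C^{-1}(t,\zeta)$ and $(\det C(t,\zeta))^{-1/2}$ can be controlled uniformly. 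Applying Leibniz's and Fa\`a di Bruno's formulas to the Gaussian yields the desired uniform bound on derivatives of any even order $2m$.

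The main obstacle is the cancellation property (vii). For $i,j\in\{1,\ldots,s_0\}$ we have $a_i=a_j=1$, so $\Gamma^0_{ij}$ is $\delta_r$-homogeneous of the critical degree $-(Q+2)$, exactly matching the Jacobian $r^{Q+1}$ in polar-type coordinates $\zeta=\delta_r\omega$. A direct change of variables then gives
$$\int_{a<||\zeta||<b}\Gamma^0_{ij}(\zeta)\,d\zeta=\log(b/a)\int_{||\zeta||=1}\Gamma^0_{ij}(\omega)\,d\sigma(\omega),$$
so the two vanishing statements in (vii) are equivalent, and it suffices to verify that the spherical mean vanishes. For this, I would integrate the pointwise identity $\mathcal{L}_0\Gamma^0=0$ (valid for $\zeta\neq 0$) over an annulus and use the divergence theorem combined with the explicit Gaussian formula to isolate the transport contribution coming from $Y$; alternatively, one can exploit the rapid decay of $\Gamma^0(\cdot,t)$ in $x$ to show $\int_{\RR^N}\partial_{x_ix_j}\Gamma^0(x,t)\,dx=0$ for each $t>0$, and then disintegrate the spherical integral against the $t$-direction using the homogeneity. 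Either route turns the vanishing into a purely algebraic statement about Gaussian integrals, which delivers the claim.
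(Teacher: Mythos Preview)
The paper does not prove this statement at all: it is quoted verbatim from \cite[Theorem 2.2]{Bramanti-JMAA-ultraparabolic} and used as a black box, so there is no proof in the paper to compare your proposal against.

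That said, your sketch is largely on the right track and aligns with how the result is actually established in the cited reference. The scaling identity $C(r^2t,z_0)=D(r)C(t,z_0)D(r)^T$ is indeed the engine behind (ii)--(v), and your derivation of (v) from homogeneity plus compactness of $S^N$ is the standard one. Two points deserve more care. First, for (i) you gloss over smoothness across the hyperplane $\{t=0\}$ away from the origin: since $\Gamma^0$ is extended by zero for $t\le 0$, you must check that all derivatives vanish as $t\to 0^+$ for fixed $x\neq 0$, which follows from the blow-up $\langle C^{-1}(t,z_0)x,x\rangle\gtrsim |x|^2/t$ beating any polynomial in $1/t$. Second, your treatment of (vii) is the weakest link. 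The reduction via anisotropic polar coordinates to the spherical mean is correct (the Jacobian is $r^{Q+1}$ against homogeneity $-(Q+2)$, giving the $\log(b/a)$ factor), but neither of your two proposed routes to the vanishing of the spherical mean is fully spelled out. The cleaner argument is your second one: for each fixed $t>0$ the Gaussian decay gives $\int_{\RR^N}\partial_{x_ix_j}\Gamma^0(x,t)\,dx=0$, and then one observes that the annular integral over $\{a<\|\zeta\|<b\}$ can be written as $\int_0^\infty\int_{\RR^N}\chi_{\{a<\|(x,t)\|<b\}}\Gamma^0_{ij}(x,t)\,dx\,dt$; homogeneity lets you replace the annular cutoff by a full-space integral in $x$ at the cost of a bounded $t$-integral, and the inner integral vanishes. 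Making this precise (in particular justifying Fubini and the passage from the anisotropic annulus to slabs in $t$) is where the real work lies.
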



The second order derivatives of smooth $u$ have the following pointwise representation. 
\begin{theorem}(\cite[Theorem 2.4]{Bramanti-JMAA-ultraparabolic})
Let $u\in C_0^\infty(\RR^{N+1})$, $u=0$ for $t\leq 0$, $z\in\text{spt}\, u$. Then, it holds that for $i,j=1,2,\ldots,s_0$

\begin{align}\label{represent}
    u_{x_ix_j}(z)&=- \text{pv}\int\limits_{\RR^{N+1}} \Gamma_{ij}(z;\zeta^{-1}\circ z)\times\left( \sum_{h,k=1}^{s_0} [a_{hk}(z)-a_{hk}(\zeta)]u_{x_hx_k}(\zeta)+\mathcal{L}u(\zeta) \right)d\zeta\nonumber\\
    &\qquad\qquad-\mathcal{L}u(z)\cdot \int\limits_{||\zeta||=1}\Gamma_j(z;\zeta)\nu_id\sigma(\zeta),
\end{align}

where $\nu_i$ is the $i^{th}$ component of the outer normal to the surface $S^{N}$, the sphere in $\RR^{N+1}$.
\end{theorem}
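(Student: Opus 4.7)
My approach rests on the reproducing formula provided by the fundamental solution $\Gamma(z;\cdot)$ of the frozen operator $\mathcal{L}_0[z]$. Since $u\in C_0^\infty(\RR^{N+1})$ vanishes for $t\leq 0$ and $\Gamma(z;\cdot)$ is the forward-in-time fundamental solution of the hypoelliptic operator $\mathcal{L}_0[z]$, one has
\[ u(\zeta) \;=\; -\int_{\RR^{N+1}}\Gamma\bigl(z;\eta^{-1}\circ\zeta\bigr)\,\mathcal{L}_0[z]u(\eta)\,d\eta. \]
The central algebraic observation is that $\mathcal{L}_0[z]$ and $\mathcal{L}$ differ only through the frozen vs.\ variable second-order coefficients, giving
\[ \mathcal{L}_0[z]u(\eta) \;=\; \mathcal{L}u(\eta)+\sum_{h,k=1}^{s_0}\bigl(a_{hk}(z)-a_{hk}(\eta)\bigr)u_{x_hx_k}(\eta), \]
and substituting this identity at the end will supply the inner bracket in the stated formula.

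For the first differentiation, a direct computation from the group law of \cref{translation} gives
\[ \eta^{-1}\circ\zeta=\bigl(x-F(t-\tau)y,\,t-\tau\bigr),\qquad \zeta=(x,t),\ \eta=(y,\tau), \]
hence $\partial_{\zeta_j}[\eta^{-1}\circ\zeta]_\ell=\delta_{j\ell}$ for $j\leq s_0$, with no extra chain-rule terms. By \cref{propGreen}(iii), $\Gamma_j$ is $\delta_r$-homogeneous of degree $-Q-1$ while the ball of radius $r$ has volume $\sim r^{Q+2}$, so $\Gamma_j$ is locally integrable and I may bring one derivative inside to obtain
\[ u_{x_j}(\zeta)=-\int \Gamma_j\bigl(z;\eta^{-1}\circ\zeta\bigr)\,\mathcal{L}_0[z]u(\eta)\,d\eta. \]

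The second derivative $\partial_{x_i}$ is the singular step: $\Gamma_{ij}$ has degree $-Q-2$ and is no longer locally integrable, so I would truncate at $\{\|\eta^{-1}\circ\zeta\|>\ve\}$, note that the complementary inner piece is $O(\ve\|\mathcal{L}_0u\|_\infty)$, differentiate the outer piece by a Leibniz rule with moving boundary, and pass to $\zeta=z$. This produces two contributions: (i) an interior term
\[ -\int_{\|\eta^{-1}\circ z\|>\ve}\Gamma_{ij}(z;\eta^{-1}\circ z)\,\mathcal{L}_0 u(\eta)\,d\eta, \]
which converges to the principal value thanks to the cancellation property $\int_{\|\zeta\|=1}\Gamma^0_{ij}(\zeta)\,d\sigma=0$ from \cref{propGreen}(vii); and (ii) a surface integral over $\{\|\eta^{-1}\circ z\|=\ve\}$. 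Parameterising this surface through $\delta_\ve$-dilation of the unit sphere, the homogeneity of $\Gamma_j$ (degree $-Q-1$) and the scaling of the surface measure (degree $Q+1$) cancel the $\ve$-dependence, while $\mathcal{L}_0[z]u(\eta)\to\mathcal{L}_0[z]u(z)=\mathcal{L}u(z)$ by continuity of $u_{x_hx_k}$ and vanishing of the bracket $a_{hk}(z)-a_{hk}(\eta)$ at $\eta=z$. The limit of (ii) is precisely $-\mathcal{L}u(z)\int_{\|\zeta\|=1}\Gamma_j(z;\zeta)\nu_i\,d\sigma(\zeta)$.

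The delicate step will be the Leibniz rule for a moving boundary in the non-Euclidean geometry of $(\RR^{N+1},d,|\cdot|)$: one must compute the correct normal velocity of the level set $\{\|\eta^{-1}\circ\zeta\|=\ve\}$ in the direction $x_i$, and check that the $\delta_\ve$-rescaling of the surface measure combined with the degree $-Q-1$ homogeneity of $\Gamma_j$ produces exactly the Euclidean factor $\nu_i$ on the unit sphere, with no spurious contribution from the drift $Y=\sum b_{ij}x_i\partial_{x_j}-\partial_t$ in $\mathcal{L}_0$. Once this geometric accounting is justified, substituting the decomposition of $\mathcal{L}_0[z]u$ into the principal-value interior term immediately yields the stated representation.
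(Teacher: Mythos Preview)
The paper does not give its own proof of this statement; it is quoted verbatim as \cite[Theorem~2.4]{Bramanti-JMAA-ultraparabolic} and used as an input to the subsequent sparse-domination machinery. There is therefore no proof in the present paper to compare your proposal against.

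That said, your outline is the correct one and matches the classical route of Chiarenza--Frasca--Longo \cite{Chiarenza1991} as adapted to the ultraparabolic setting in \cite{Bramanti-JMAA-ultraparabolic}: reproduce $u$ against the frozen fundamental solution, bring one derivative inside by the local integrability of $\Gamma_j$ (degree $-Q-1$ against volume $r^{Q+2}$), and for the second derivative truncate at $\|\eta^{-1}\circ\zeta\|>\ve$, invoke the vanishing mean of $\Gamma_{ij}$ from \cref{propGreen}(vii) for the principal value, and read off the surface term via the $\delta_\ve$-dilation and homogeneity bookkeeping. Your identification of $\mathcal{L}_0[z]u(z)=\mathcal{L}u(z)$ at the diagonal is exactly why the surface coefficient is $\mathcal{L}u(z)$ rather than $\mathcal{L}_0 u(z)$.

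The only point I would tighten is the ``moving boundary'' computation you flag as delicate. In this anisotropic setting the level set $\{\|\eta^{-1}\circ\zeta\|=\ve\}$ is not a Euclidean sphere, and the outward normal velocity in the $x_i$-direction is what produces the factor $\nu_i$ after rescaling. The cleanest way to see this is to change variables $\eta\mapsto \zeta\circ\delta_\ve(\xi^{-1})$ so that the boundary becomes exactly $\{\|\xi\|=1\}$; the Jacobian of this change is $\ve^{Q+2}$, the surface Jacobian is $\ve^{Q+1}$, and the homogeneity of $\Gamma_j$ then gives the $\ve$-independent limit with the Euclidean normal $\nu_i$ on $S^N$. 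This substitution sidesteps any worry about drift contributions from $Y$, since the dilation is compatible with the group law by construction.
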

We may write the expression in \cref{represent} in a more tractable manner as follows. We define the following:
\begin{align}\label{notat1}
    K_{ij}f(z)=\text{pv}\int\limits_{\RR^{N+1}}\Gamma_{ij}(z;\zeta^{-1}\circ z)f(\zeta)\,d\zeta,\nonumber\\
    \alpha_{ij}(z)=\int\limits_{||\zeta||=1}\Gamma_j(z;\zeta)\nu_id\sigma(\zeta),\mbox{ and }
\end{align}
for an operator $K$ and a function $a\in L^\infty(\RR^{N+1})$, define the {\it commutator} as
\begin{align}\label{notat2}
   [a, K](f)=K(af)-a\cdot K(f).
\end{align}

Using the notations \cref{notat1} and \cref{notat2} in \cref{represent}, we obtain the following compact expression for the second order derivatives of $u$.

\begin{align}\label{represent2}
    u_{x_ix_j}=- K_{ij}(\mathcal{L}u) + \sum_{h,k=1}^{s_0} [a_{hk}, K_{ij}](u_{x_hx_k})+\alpha_{ij}\cdot\mathcal{L}u,
\end{align} for $i,j=1,2,\ldots,s_0$.

In order to study the $L^p$ boundedness properties of the operators $K_{ij}$ and $[a_{hk}, K_{ij}]$, we will make use of spherical harmonics to obtain a Fourier series representation for the singular kernel $\Gamma_{ij}$. This technique was first used in the classic paper of Cald\'eron and Zygmund \cite{CaldZyg57}. In the case of anisotropic homogeneities, spherical harmonics found use in \cite{FabesRiv66}.

\begin{definition}(Spherical Harmonics)
A harmonic polynomial in $\RR^{N+1}$ which is homogeneous of degree $m$ is called a solid harmonic of degree $m$. The restriction of a solid harmonic to a sphere $S^N$ is called a spherical harmonic of degree $m$.
\end{definition}

The dimension of the space of spherical harmonics of degree $m$ is 
\begin{align}\label{dimSphHar}
    g_m=\begin{pmatrix}
m+N\\
N
\end{pmatrix}-\begin{pmatrix}
m+N-2\\
N
\end{pmatrix}\leq C_N \cdot m^{N-1}
\end{align}

The orthonormal system of spherical harmonics $\{Y_{km}\}_{k=1,\ldots,g_m}^{m=0,1,2,\ldots}$ is complete in $L^2(S^N)$.

We list below some properties of the spherical harmonics which may be found in \cite{CaldZyg57} and \cite{FabesRiv66}:
\begin{enumerate}
\item We have 
\begin{align*}\left|\left(\frac{\partial}{\partial x}\right)^\beta Y_{km}(x)\right|\leq C_N\cdot m^{\frac{N-1}{2}+|\beta|}, \mbox{ for } x\in S^{N}, k=1,2,\ldots, g_m.\end{align*}
\item Let $f\in C^\infty(S^N)$.  Let $f(x)\sim \sum_{k,m} b_{km}Y_{km}$ be its expansion in Fourier series, so that $b_{km}=\int\limits_{S^N}f(x)Y_{km}(x)\,d\sigma$, then for every $l\in\NN$, there is a constant $c$ depending on $l$ such that \begin{align*}|b_{km}|\leq c\cdot m^{-2l}\sup_{\stackrel{|\beta|=2l}{x\in S^N}}\left|\left(\frac{\partial}{\partial x}\right)^\beta f(x)\right|.\end{align*}
\item For a fixed $z\in\RR^{N+1}$ and $\zeta\in S^N$, we obtain the Fourier series expansion 
\begin{align*}\Gamma_{ij}(z;\zeta)=\sum_{m=0}^\infty\sum_{k=1}^{g_m} c_{ij}^{km}(z) Y_{km}(\zeta),\end{align*} and then for a general $\zeta\in \RR^{N+1}$, using the dilation $\zeta'=D(||\zeta||^{-1})\zeta$ and exploiting the homogeneity property of $\Gamma_{ij}$ we receive \begin{align*}\Gamma_{ij}(z;\zeta)=\sum_{m=0}^\infty\sum_{k=1}^{g_m} c_{ij}^{km}(z) \frac{Y_{km}(\zeta')}{||\zeta||^{Q+2}}.\end{align*}
\item $c_{ij}^{km}=0$ for $m=0$.
\item For $m>0$, we have the bound \begin{align}\label{riemlebdecay}|c_{ij}^{km}(z)|\leq C\cdot m^{-2l} \mbox{ for any } l>1, z\in\RR^{N+1}.\end{align}
\end{enumerate}

Now, define \begin{align}
    K_{km}(z)=\frac{Y_{km}(z')}{||z||^{Q+2}}.
    \end{align}
    
    \begin{theorem}(\cite{Bramanti-JMAA-ultraparabolic})\label{propertieskern}
        The kernels $K_{km}$ satisfy the following properties:
\begin{enumerate}
    \item $K_{km}\in C^\infty(\RR^{N+1}\setminus\{0\})$.
    \item $K_{km}$ is $\delta_r$-homogeneous of degree $-(Q+2)$.
    \item $K_{km}$ satisfies the following growth condition, for any $z\in\RR^{N+1}\setminus\{0\}$,
                \begin{align}\label{kernelprop3}
                    |K_{km}(z)|\leq \frac{c_{km}}{||z||^{Q+2}}\mbox{ with } c_{km}\leq C_N\cdot m^{\frac{N-1}{2}}.
                \end{align}
    \item $K_{km}$ satisfies the following vanishing property
                \begin{align}\label{kernelprop4}
                    \int\limits_{||\zeta||=1} K_{km}(\zeta)\,d\sigma(\zeta)=0.
                \end{align}
\end{enumerate}
    \end{theorem}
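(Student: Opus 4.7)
The plan is to verify properties (1)--(4) in turn using the explicit formula $K_{km}(z) = Y_{km}(z')/||z||^{Q+2}$ with $z' = D(||z||^{-1})z$, combined with the dilation group structure and standard spherical harmonic estimates.

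For (1), I would first show that the homogeneous norm $||\cdot||$ is $C^\infty$ on $\RR^{N+1}\setminus\{0\}$ by the implicit function theorem applied to the defining equation of \cref{homnorm}: the $\rho$-derivative of the defining LHS is strictly negative for $\rho>0$ and $z\neq 0$, so $\rho = ||z||$ depends smoothly on $z$. Since $B$ is nilpotent, $D(r)$ is polynomial in $r$, so $z\mapsto D(||z||^{-1})z$ is smooth on $\RR^{N+1}\setminus\{0\}$; composing with the polynomial $Y_{km}$ and dividing by the smooth nonvanishing function $||z||^{Q+2}$ gives the smoothness of $K_{km}$. For (2), the calculation is direct: using $||\delta_r z|| = r||z||$ and the one-parameter group law $D(r)D(s) = D(rs)$ with $D(1) = I$, one computes $(\delta_r z)' = D((r||z||)^{-1})D(r)z = D(||z||^{-1})z = z'$, which immediately yields $K_{km}(\delta_r z) = Y_{km}(z')/(r||z||)^{Q+2} = r^{-(Q+2)}K_{km}(z)$.

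For (3), the homogeneity established in (2) reduces the estimate to bounding $|K_{km}|$ on the compact homogeneous unit sphere $\Sigma = \{||\zeta||=1\}$, on which $\zeta' = \zeta$ and hence $K_{km}(\zeta) = Y_{km}(\zeta)$. The classical bound $\sup_{S^N}|Y_{km}|\le C_N m^{(N-1)/2}$, together with the appropriate extension convention for $Y_{km}$ from $S^N$ to $\Sigma$ (by radial constancy, or equivalently via the polynomial homogeneity of $Y_{km}$ controlled by the diameter of $\Sigma$), produces the stated polynomial-in-$m$ estimate $c_{km}\le C_N m^{(N-1)/2}$. For (4), I would use the classical orthogonality of spherical harmonics in $L^2(S^N)$: for $m>0$, $Y_{km}$ is orthogonal to the constant spherical harmonic $Y_{00}$, so $\int_{S^N} Y_{km}\,d\sigma_{\mathrm{Euc}} = 0$. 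This transfers to $\Sigma$ via the polar decomposition $\zeta = \delta_r\omega$ with $\omega\in\Sigma$; alternatively, (4) can be extracted from property (vii) of \cref{propGreen} (vanishing of $\Gamma^0_{ij}$ on the homogeneous unit sphere) applied term by term to the Fourier expansion of $\Gamma^0_{ij}$, using the orthogonality of the $Y_{km}$ together with the fact that $c_{ij}^{km}(z)$ is nonzero for $m>0$.

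The main obstacle will be step (3): the careful bookkeeping of the constant multiplying $m^{(N-1)/2}$ under the reparametrization from the Euclidean sphere $S^N$ (the natural home of the $Y_{km}$) to the homogeneous unit sphere $\Sigma$ (the natural home of the $\delta_r$-homogeneous kernel). Once the convention for evaluating $Y_{km}$ off $S^N$ is pinned down, the remainder is routine; retaining only polynomial growth in $m$ under this change of sphere is precisely what makes the spherical harmonic expansion of the variable-coefficient singular kernel $\Gamma_{ij}$ useful for the subsequent analysis of $K_{ij}$ and its commutators.
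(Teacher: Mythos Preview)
The paper does not give its own proof of this theorem; it is quoted directly from \cite{Bramanti-JMAA-ultraparabolic}. Your outline is essentially correct and follows the natural route, but the difficulty you flag in step (3) is illusory: by the very definition of the homogeneous norm in \cref{homnorm}, setting $\rho=1$ in the defining equation gives $x_1^2+\cdots+x_N^2+t^2=1$, so the homogeneous unit sphere $\Sigma=\{||\zeta||=1\}$ \emph{coincides} with the Euclidean sphere $S^N$. Hence $\zeta'=\zeta\in S^N$ whenever $||\zeta||=1$, and the classical bound $\sup_{S^N}|Y_{km}|\le C_N m^{(N-1)/2}$ applies directly with no transfer needed. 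Your proposed alternative of controlling $Y_{km}$ off $S^N$ via polynomial homogeneity would actually fail, since it introduces a factor $(\mathrm{diam}\,\Sigma)^m$ growing exponentially in $m$; fortunately this route is unnecessary.

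Two minor points: in (1), the polynomiality of $D(r)=\delta_r$ follows from its explicit diagonal form, not from the nilpotency of $B$ (which is what makes $F(\tau)=\exp(-\tau B^T)$ polynomial). In (4), once $\Sigma=S^N$ is noted, the orthogonality $\int_{S^N}Y_{km}\,d\sigma=0$ for $m\ge 1$ finishes the argument immediately; your alternative route through \cref{propGreen}(vii) would require an additional linear-independence argument to extract the vanishing of each individual term from that of the sum.
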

    
We shall also require the following property of $K_{km}$.

\begin{theorem}(H\"ormander's condition \cite[Proposition 3.4]{Bramanti-JMAA-ultraparabolic})\label{horpropthm}
There exists $\beta=\beta(r)\in (0,1]$, $M=M(B)>1$, $c_{km}=c(N)\cdot m^{\frac{N+1}{2}}$ such that
\begin{align}\label{horprop1}
    |K_{km}(\eta^{-1}\circ \zeta)-K_{km}(\eta^{-1}\circ z)|\leq c_{km}\frac{||\zeta^{-1}\circ z||^{\beta}}{||\eta^{-1}\circ z||^{Q+2+\beta}}, and
\end{align}
\begin{align}\label{horprop2}
    |K_{km}(\zeta^{-1}\circ \eta)-K_{km}(z^{-1}\circ \eta)|\leq c_{km}\frac{||\zeta^{-1}\circ z||^{\beta}}{||\eta^{-1}\circ z||^{Q+2+\beta}},
\end{align} for every $z,\zeta,\eta\in\RR^{N+1}$ satisfying $||\eta^{-1}\circ z||\geq M||\zeta^{-1}\circ z||$.
\end{theorem}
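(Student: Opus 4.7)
The plan is to prove \cref{horprop1} in detail; the estimate \cref{horprop2} will follow by the symmetric argument using \cref{order2} in place of \cref{order1}. For convenience I set $u = \eta^{-1}\circ z$ and $v = \eta^{-1}\circ \zeta$, write $r = \|u\| = \|\eta^{-1}\circ z\|$, and denote the quantity $s := \|\zeta^{-1}\circ z\|$. The working hypothesis is $r \geq Ms$.

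The first step is to reduce to the unit sphere via the $\delta_\rho$-homogeneity of $K_{km}$ stated in \cref{propertieskern}. Setting $u' = \delta_{1/r}(u)$ and $v' = \delta_{1/r}(v)$, homogeneity of degree $-(Q+2)$ gives
\[
|K_{km}(u) - K_{km}(v)| = r^{-(Q+2)}\,|K_{km}(u') - K_{km}(v')|.
\]
Note $\|u'\|=1$. By \cref{order1} applied to $z,\zeta,\eta$, the \emph{linear} difference satisfies $\|u - v\| \leq \tilde K s^\beta r^{1-\beta}$, and since $\delta_{1/r}$ is linear and scales the homogeneous norm by $1/r$,
\[
\|u' - v'\| \;=\; r^{-1}\|u - v\| \;\leq\; \tilde K\,(s/r)^\beta.
\]
Choosing $M$ large depending on $\tilde K$ ensures $\|u'-v'\|\leq 1/2$, so in particular $v'$ lies in a fixed annulus $\{1/2 \leq \|w\| \leq 3/2\}$, bounded away from the origin.

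The second step is to exploit smoothness of $K_{km}$. On the annulus above, $K_{km}$ is $C^\infty$ by \cref{propertieskern}(1), and by the mean value theorem along the Euclidean segment joining $u'$ and $v'$,
\[
|K_{km}(u') - K_{km}(v')| \;\leq\; \Bigl(\sup_{w\in[u',v']}|\nabla K_{km}(w)|\Bigr)\,|u'-v'|_{\mathrm{Eucl}}.
\]
From the explicit form $K_{km}(w)=Y_{km}(w')/\|w\|^{Q+2}$ together with the spherical-harmonic derivative bound $\sup_{S^N}|\nabla^\beta Y_{km}| \leq C_N m^{(N-1)/2 + |\beta|}$ applied with $|\beta|=1$, the gradient of $K_{km}$ on this annulus is bounded by $C_N m^{(N+1)/2}$. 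Finally, since for $w\in\RR^{N+1}$ with $\|w\|\leq 1$ one has the elementary inequality $|w|_{\mathrm{Eucl}} \leq C\|w\|$ (because all exponents $a_i\geq 1$ in \cref{homnorm}), we can convert Euclidean to homogeneous norm:
\[
|u'-v'|_{\mathrm{Eucl}} \;\leq\; C\,\|u'-v'\| \;\leq\; C\tilde K\,(s/r)^\beta.
\]
Combining the three displays yields
\[
|K_{km}(u) - K_{km}(v)| \;\leq\; C_N\, m^{(N+1)/2}\, r^{-(Q+2)}\,(s/r)^\beta \;=\; c_{km}\,\frac{s^\beta}{r^{Q+2+\beta}},
\]
which is exactly \cref{horprop1} with $c_{km} = c(N)\, m^{(N+1)/2}$. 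For \cref{horprop2}, I would set $u = z^{-1}\circ \eta$, $v = \zeta^{-1}\circ \eta$ and invoke \cref{order2} in the identical way; the only subtlety is that \cref{order2} bounds the linear difference $\|u - v\|$ by the \emph{same} right-hand side, so the argument is truly symmetric.

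The main technical point is the careful conversion between the anisotropic homogeneous norm (needed to invoke \cref{orderbeta}) and the Euclidean norm (needed for the mean value theorem), together with ensuring that the power $\beta$ is preserved through the $\delta_{1/r}$-rescaling rather than being replaced by $\beta/\max a_i$. This works out cleanly only because we have restricted attention to the regime $\|u'-v'\|\leq 1/2$ enforced by the hypothesis $r\geq Ms$, on which the two norms are comparable with the correct linear relationship. The $m$-dependence is automatic once one uses the right derivative bound for spherical harmonics.
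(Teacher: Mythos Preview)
The paper does not give its own proof of this statement: it is merely quoted from \cite[Proposition~3.4]{Bramanti-JMAA-ultraparabolic}. Hence there is no in-paper argument to compare against.

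Your argument is the standard one and is correct. A couple of small points deserve a cleaner justification. First, after rescaling you assert that the Euclidean segment $[u',v']$ stays in a fixed annulus $\{1/2\leq \|w\|\leq 3/2\}$ from the bound $\|u'-v'\|\leq 1/2$; this is true but not quite immediate, since $\|\cdot\|$ does not interact with Euclidean subtraction via a triangle inequality. The clean way is to note that the homogeneous unit sphere coincides with the Euclidean unit sphere (set $\rho=1$ in \cref{homnorm}), so $|u'|_{\mathrm{Eucl}}=1$, then use your inequality $|w|_{\mathrm{Eucl}}\leq C\|w\|$ to get $|u'-v'|_{\mathrm{Eucl}}$ small, hence the Euclidean norm of every point on the segment lies in $[1/2,3/2]$; finally the converse comparison $\|w\|\geq \min(1,|w|_{\mathrm{Eucl}})$ (which follows from the same observation that $\rho^{2a_i}\leq \rho^2$ for $\rho\leq 1$) gives the homogeneous lower bound needed for smoothness of $K_{km}$. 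Second, the gradient bound $|\nabla K_{km}|\leq C_N m^{(N+1)/2}$ on this annulus requires the chain rule through the anisotropic projection $w\mapsto \delta_{1/\|w\|}w$ and the factor $\|w\|^{-(Q+2)}$; these contribute no $m$-dependence, so your claimed power is correct, but it is worth saying so explicitly. With these two clarifications your proof is complete, and it is precisely the approach in \cite{Bramanti-JMAA-ultraparabolic}.
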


\begin{remark}
By an abuse of notation, we use the same factor $c_{km}$ in \cref{kernelprop3} and \cref{horpropthm} since in our estimates, we will always majorize by the larger number.
\end{remark}

Now, we define a further singular operator
\begin{align}
    T_{km}f(z)=\text{p.v.}\int\limits_{\RR^{N+1}} K_{km}(\zeta^{-1}\circ z)f(\zeta)\,d\zeta,
\end{align} so that \cref{represent2} unfolds to

\begin{align}\label{represent3}
    u_{x_ix_j}=- \sum_{m=1}^\infty\sum_{k=1}^{g_m}c_{ij}^{km}T_{km}(\mathcal{L}u) + \sum_{h,k=1}^{s_0}  \sum_{m=1}^\infty\sum_{k=1}^{g_m} c_{ij}^{km}\lbrack a_{hk}, T_{km}\rbrack(u_{x_hx_k})+\alpha_{ij}\cdot\mathcal{L}u,
\end{align} for $i,j=1,2,\ldots,s_0$.

\section{Pointwise estimates}

\subsection{Sparse Domination Preliminaries}\label{sparsedomprop}
We begin with the uncentered Hardy-Littlewood maximal function is defined as follows
$$\displaystyle\mathcal{M}f(\zeta)=\sup_{\mathcal{B}\ni \zeta}\fint\limits_{\mathcal{B}} |f(z)|\,dz,$$
where the supremum is over all balls $\mathcal{B}$ containing the points $\zeta$. 
Another useful variant of the \HLM~is the dyadic \HLM~and to define that we need the following notion of dyadic grids for homogeneous spaces, see \cite{HytKai}.

Let $0<c_1\leq C_1<\infty$ and $\delta\in (0,1)$. By a general dyadic grid $\mathscr{U}=\bigcup_{k\in \mathbb{Z}}\mathscr{U}_k$ on $\RR^{N+1}$, we mean a countable collection of sets $\mathscr{U}_k^{\alpha}$ for $k\in\ZZ$, each associated with a point $z_k^\alpha$, $\alpha$ coming from a countable index, with the following properties:
\begin{itemize}
    \item $\RR^{N+1}=\bigcup\limits_{\alpha}\mathscr{U}_k^{\alpha}$ for every $k\in\ZZ$.
    \item If $l\geq k$, then either $\mathscr{U}_l^{\beta}\subset \mathscr{U}_k^{\alpha}$ or $\mathscr{U}_l^{\beta}\cap \mathscr{U}_k^{\alpha}=\emptyset$.
    \item There exist $c_1, C_1>0$ such that $B(z_k^\alpha,c_1\delta^k)\subset \mathscr{U}_k^\alpha\subset B(z_k^\alpha,C_1\delta^k):=B(\mathscr{U}_k^\alpha)$.
    \item If $l\geq k$ and $\mathscr{U}_l^{\beta}\subset \mathscr{U}_k^{\alpha}$, then $B(\mathscr{U}_l^{\beta})\subset B(\mathscr{U}_k^{\alpha})$.
    \item For all $k\in\ZZ$, there exists $\alpha$ such that $x_0=z_k^\alpha$.
\end{itemize}
Hyt\"onen and Kairema \cite[Theorem 4.1]{HytKai} proved the existence of a finite collection of dyadic grids $\mathscr{U}^t$, $t=1,2,\ldots,L$ such that for every ball $B(z,r)\subset \RR^{N+1}$ with $\delta^{k+2}\leq r<\delta^{k+1}$, there exists some $t\in \{1,2,\ldots,L\}$ and $\mathscr{U}_k^\alpha\in \mathscr{U}^t$ such that $B(z,r)\subset \mathscr{U}_k^\alpha$ and $\text{diam}\,\mathscr{U}_k^\alpha\leq C\,r$, where $C$ depends on $\delta$.

Corresponding to a dyadic grid $\mathscr{U}$, the dyadic \HLM~is defined as follows:
$$\displaystyle\mathcal{M}_{\mathscr{U}}f(\zeta)=\sup_{\PP\ni \zeta; \PP\in \mathscr{U} }\fint\limits_{\PP} |f(z)|\,dz.$$
It is obvious that $\mathcal{M}_{\mathscr{U}}$ is pointwise controlled by the \HLM~$\mathcal{M}$, but more interestingly, as proved in \cite[Prop. 7.9]{HytKai}, we have
\begin{align}
\label{comparison-dyadic-full}
\mathcal{M}f\lesssim \sum_{t=1}^{L} \mathcal{M}_{\mathscr{U}^t}f.
\end{align}
In the rest of the paper, an element of a dyadic grid as above will be called as an ultraparabolic dyadic cube $\PP\in \UU$. We reserve the notation $\UU(\PP)$ for the collection of all dyadic cubes $\TT\in \UU$ such that $\TT\subset \PP$ and $\displaystyle \mathcal{M}^{\PP}f(\zeta):=\sup_{\TT\ni \zeta; \TT\in \mathscr{U}(\PP) }\fint\limits_{\TT} |f|$ is the restricted \HLM. 



\begin{definition}[Sparse family]
Let $\mathscr{G}$ be a family of cubes belonging to some general dyadic grid and $\eta\in (0, 1)$. We call $\mathscr{G}$ to be a {\it{$\eta$-sparse family}} if for every $\mathcal{P}\in \mathscr{G}$ there exists a set $F_{\PP}$ such that $|F_{\PP}|\geq \eta |\PP|$ and $\{F_{\PP}\}_{\GG}$ are mutually disjoint.
\end{definition}

\begin{remark}
In application we shall work with collection of ultraparabolic dyadic cubes $\GG\subset \UU$ such that for any $\TT\in \GG$ we have 
\begin{equation}
\sum_{\TT'\in \UU: \TT'\in \UU(\TT)}|\TT'|\leq \frac{1}{2}|\TT|.
\label{sparse-cond}
\end{equation}
The above condition ensures that the collection $\GG$ is sparse since for each $\TT\in \GG$ if we define \[\displaystyle F_{\TT}=\TT\setminus\bigcup_{\TT'\in \UU(\TT)}\TT',\] then by the dyadic structure of $\UU$, the collection $\{F_{\TT}\}_{\GG}$ is pairwise disjoint and \eqref{sparse-cond} implies \[\displaystyle |F_{\TT}|\,\geq\, |T|\quad -\sum_{\TT'\in \UU: \TT'\in \UU(\TT)}|\TT'|\,\geq\, \frac{1}{2}|\TT|.\]
\label{sparse-remark}
\end{remark}

\begin{definition}[Sparse operator]
\label{defition-sparse-operator}
Given a {\it sparse family} $\GG$, the associated sparse operator $\mathscr{A}_{\GG}$ is defined as follows
\begin{align*}
\mathscr{A}_{\GG}f(x):=\sum_{\PP\in \GG}\left(\frac{1}{|\PP|}\int_{\PP}|f|\right)\chi_{\PP}(x).    
\end{align*}
We also introduce the following sparse operators in order to estimate the commutator terms appearing in \eqref{represent3}. Corresponding to a sparse family $\GG$ and a function $b$, the operator $\mathscr{A}_{\GG, b}$ is defined as follows
\begin{align*}
\mathscr{A}_{\GG}^{ b}f(x):=\sum_{\PP\in \GG}|b(x)-b_{\PP}|\left(\frac{1}{|\PP|}\int_{\PP}|f|\right)\chi_{\PP}(x).    
\end{align*}
The adjoint of $\mathscr{A}_{\GG}^{ b}$ is given by 
\begin{align*}
\mathscr{A}_{\GG}^{ b, *}f(x):=\sum_{\PP\in \GG}\left(\frac{1}{|\PP|}\int_{\PP}|b-b_{\PP}||f|\right)\chi_{\PP}(x).    
\end{align*}
\end{definition}

We let $\mathcal{M}$ stand for the uncentered Hardy--Littlewood maximal function on $(\mathbb{R}^{N+1}, d, | \cdot |)$ and for $1<r<\infty$, $\mathcal{M}_{r}f:=(\mathcal{M}(|f|^r))^{1/r}$. Our proofs on sparse domination depend on a general sparse domination principle by Lerner--Ombrosi, see Theorem 1.1 in \cite{Lerner-Ombrosi-pointwaise-sparse2020}. For any linear operator $T$, let us consider the following version of the grand maximal truncated operator: For $s > 0$
\begin{align} \label{def:grand-maximal-truncated-operator}
\mathcal{M}^{\#}_{T, s} f(x) = \sup_{B: B \ni x} \esssup \limits_{y,z \in B} \left| T(f \chi_{\mathbb{R}^{N+1}\setminus s B})(y)-T(f \chi_{\mathbb{R}^{N+1}\setminus s B})(z) \right|, 
\end{align}
where the supremum is taken over all balls $B$ containing the point $x$.

Let us state the following pointwise result which is even new in the context of elliptic and parabolic equations.

\begin{theorem}\label{represent_pointwise_bound}
Suppose that the matrices $A$ and $C$ satisfy \descref{H1}{H1}and \descref{H2}{H2}. Fix
$i,j\in\{1,2,\ldots,s_0\}$.
Then for any ultraparabolic cube $\PP\subset \Omega$, there exist sparse families $\{\GG_{km, l}: k=1, \cdots, g_{m}, 1\leq l\leq C(N), m\in \mathbb{N}\}$ such that for any $u\in \mathcal{S}^X(\Omega,\mathcal{L})$, we have
\begin{align}\label{represent3-sparse}
    |u_{x_ix_j}(z)|&\leq \kappa_{A, N, C} \sum_{m=1}^\infty\sum_{k=1}^{g_m}c_{km} |c_{ij}^{km}| \sum_{l=1}^{C(N)}\,  \mathscr{A}_{\GG_{km, l}}(\mathcal{L}u)(z) \nonumber\\
    &\quad+  \sum_{h,k=1}^{s_0}  \sum_{m=1}^\infty\sum_{k=1}^{g_m}c_{km} |c_{ij}^{km}| \sum_{l=1}^{C(N)} \mathscr{A}_{\GG_{km, l} }^{a_{hk}}(u_{x_hx_k})(z)\nonumber\\
    &\quad\quad+\sum_{h,k=1}^{s_0}  \sum_{m=1}^\infty\sum_{k=1}^{g_m}c_{km} |c_{ij}^{km}| \sum_{l=1}^{C(N)} \, \mathscr{A}_{\GG_{km, l} }^{a_{hk}, *}(u_{x_hx_k})(z)+|\alpha_{ij}|\cdot|\mathcal{L}u(z)|,
\end{align} for a.e. $z\in \PP$. 
\end{theorem}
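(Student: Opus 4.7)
The plan is to start from the pointwise representation \eqref{represent3} and replace, for each pair $(k,m)$, the singular integral $T_{km}(\mathcal{L}u)$ and the commutator $[a_{hk}, T_{km}](u_{x_hx_k})$ by pointwise-dominating sparse expressions, leaving the residual term $\alpha_{ij}\cdot\mathcal{L}u$ unchanged. Fix an ultraparabolic cube $\PP\subset\Omega$. By \cref{propertieskern} and \cref{horpropthm}, each kernel $K_{km}$ is smooth off the origin, $\delta_r$-homogeneous of degree $-(Q+2)$, mean-zero on spheres, and satisfies a Hörmander-type regularity with constant of order $c_{km}$. This is exactly the setup needed to apply sparse-domination theory on a space of homogeneous type.

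The central step is to apply the Lerner--Ombrosi pointwise sparse-domination principle to $T_{km}$ and to $[a_{hk},T_{km}]$ on $(\RR^{N+1},d,|\cdot|)$. For $T_{km}$, this requires two endpoint estimates: the weak-$(1,1)$ bound of $T_{km}$ itself, which follows from \cref{caldzygdec} together with \eqref{horprop2}; and the weak-$(1,1)$ bound of the grand maximal truncated operator $\mathcal{M}^{\#}_{T_{km},s}$ from \eqref{def:grand-maximal-truncated-operator}, which follows from \eqref{horprop1} by exchanging the roles of the two points in the standard kernel-difference argument. Feeding these endpoint estimates into the Lerner--Ombrosi procedure, and running the stopping-time construction locally inside $\PP$ via the restricted maximal operator $\mathcal{M}^{\PP}$, produces, for each of the $L$ Hytönen--Kairema dyadic grids $\mathscr{U}^l$, a sparse family $\GG_{km,l}\subset\mathscr{U}^l$ of subcubes of $\PP$ satisfying
\begin{align*}
|T_{km}(\mathcal{L}u)(z)|\leq \kappa\, c_{km}\sum_{l=1}^{L}\mathscr{A}_{\GG_{km,l}}(\mathcal{L}u)(z)\qquad\text{for a.e. } z\in\PP.
\end{align*}
For the commutator, I would insert inside each sparse cube $\PP'$ the algebraic decomposition $a_{hk}(z)-a_{hk}(\zeta)=(a_{hk}(z)-(a_{hk})_{\PP'})-(a_{hk}(\zeta)-(a_{hk})_{\PP'})$. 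The first summand factors the oscillation $|a_{hk}(z)-(a_{hk})_{\PP'}|$ outside the $\PP'$-average and thereby generates the sparse operator $\mathscr{A}_{\GG_{km,l}}^{a_{hk}}$, while the second summand keeps the oscillation under the average and produces the adjoint operator $\mathscr{A}_{\GG_{km,l}}^{a_{hk},*}$. The same Lerner--Ombrosi scheme applied to each piece yields the analogous pointwise bound for $[a_{hk},T_{km}](u_{x_hx_k})$, and substituting everything back into \eqref{represent3} gives \eqref{represent3-sparse}.

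The main obstacle I anticipate is the quantitative tracking of constants in the spherical-harmonic index $m$: the Calderón--Zygmund and Lerner--Ombrosi arguments have to be carried out with enough care that the operator norms of $T_{km}$ and $\mathcal{M}^{\#}_{T_{km},s}$ grow at most polynomially in $m$, so that their contribution is absorbed into the single factor $c_{km}$ appearing in the statement. The absolute convergence of the resulting double series over $(k,m)$ in \eqref{represent3-sparse} is then guaranteed by the Riemann--Lebesgue type decay \eqref{riemlebdecay} of the Fourier coefficients $c_{ij}^{km}(z)$ of $\Gamma_{ij}(z;\cdot)$. A secondary technical point is that the Lerner--Ombrosi principle is usually phrased globally on $\RR^{N+1}$, whereas here the sparse families must be supported in the fixed cube $\PP$; this is handled by running the stopping-time construction against the localized maximal operator $\mathcal{M}^{\PP}$ introduced in \cref{sparsedomprop}, using the sparseness condition \eqref{sparse-cond} to guarantee that the selected cubes indeed form a $\tfrac12$-sparse family.
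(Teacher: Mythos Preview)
Your proposal is correct and follows essentially the same route as the paper: start from the representation \eqref{represent3}, establish the weak-$(1,1)$ bounds for $T_{km}$ and for the grand maximal truncation $\mathcal{M}^{\#}_{T_{km},s}$ (with constants of order $c_{km}$), feed these into a Lerner--Ombrosi type stopping-time construction localized in $\PP$ to obtain sparse domination of $T_{km}$ and of $[a_{hk},T_{km}]$, and then substitute back. The paper carries out exactly this program, proving the two endpoint bounds explicitly and then implementing the stopping-time argument in full detail rather than invoking Lerner--Ombrosi as a black box; in particular, the commutator case (your algebraic splitting $a_{hk}(z)-a_{hk}(\zeta)=(a_{hk}(z)-(a_{hk})_{\PP'})-(a_{hk}(\zeta)-(a_{hk})_{\PP'})$) requires a somewhat more delicate recursion than the bare operator case, and the paper's version of the commutator bound also produces an auxiliary $\|a_{hk}\|_{\text{BMO}}\,\mathscr{A}_{\GG}(u_{x_hx_k})$ term that is absorbed into the overall constant.
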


To conclude the proof of the above theorem we need to individually handle each term appearing in \eqref{represent3}. For the rest of this section fix some $k, m$ such that $m\in \mathbb{N}$ and $k=1, \cdots, g_{m}$.

\subsection{End-point boundedness }

In order to kickstart the sparse domination method, we will require a weak-$L^1$ type estimate for the operator $T_{km}$. This, in turn, requires the $L^2$ boundedness of the operator $T_{km}$. The proof of $L^2$ boundedness can be obtained from the proof given in Aimar\cite{AimarTAMS}.

\begin{theorem}\cite[Theorem 3.6]{Bramanti-JMAA-ultraparabolic}($L^2$ boundedness of $T_{km}$)\label{l2bounded}
Let us define 
\begin{align*}
    T_{km}f(z)=\text{p.v.}\int\limits_{\RR^{N+1}} K_{km}(\zeta^{-1}\circ z)f(\zeta)\,d\zeta.
\end{align*} Then
\begin{align*}
    ||T_{km}f||_{L^2}\leq c_{km}||f||_{L^2},
\end{align*} where $c_{km}$ is the constant that appears in \cref{horpropthm}.
\end{theorem}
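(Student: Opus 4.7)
The plan is to treat $T_{km}$ as a convolution-type singular integral on the homogeneous group $(\RR^{N+1},\circ,\delta_r)$ and apply a Cotlar--Stein almost-orthogonality argument, in the spirit of Aimar's treatment of singular integrals on spaces of homogeneous type. The three classical ingredients are already in hand: the size bound \cref{kernelprop3}, the spherical cancellation \cref{kernelprop4}, and the Hörmander-type regularity \cref{horprop1,horprop2}.

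\textbf{Dyadic decomposition.} For each $j\in\ZZ$, set $A_j=\{z\in\RR^{N+1}:2^j\leq\|z\|<2^{j+1}\}$ and $K_{km}^{(j)}=K_{km}\chi_{A_j}$, with associated truncated convolution operator
\[
T_{km}^{(j)}f(z)=\int_{\RR^{N+1}}K_{km}^{(j)}(\zeta^{-1}\circ z)\,f(\zeta)\,d\zeta.
\]
Since $|A_j|\sim 2^{j(Q+2)}$, \cref{kernelprop3} gives $\|K_{km}^{(j)}\|_{L^1}\leq C\,c_{km}$ uniformly in $j$, so each $T_{km}^{(j)}$ is bounded on $L^2$ with operator norm $\leq C\,c_{km}$. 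Combining the $\delta_r$-homogeneity of $K_{km}$ (property (2) of \cref{propertieskern}) with \cref{kernelprop4} shows $\int_{A_j}K_{km}^{(j)}=0$, which will be the crucial cancellation below.

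\textbf{Almost-orthogonality.} For indices $j>l$, the composition $T_{km}^{(j)}(T_{km}^{(l)})^{*}$ is convolution with $K_{km}^{(j)}*\widetilde{K_{km}^{(l)}}$, where $\widetilde K(z)=\overline{K(z^{-1})}$. Using the annular vanishing of $K_{km}^{(l)}$, one subtracts from $K_{km}^{(j)}(\,\cdot\,)$ its value at a fixed reference point inside the $A_l$-integration, and then controls the resulting difference by \cref{horprop1}. After transferring differences of group arguments into powers of the homogeneous norm via \cref{orderbeta}, one arrives at
\[
\|T_{km}^{(j)}(T_{km}^{(l)})^{*}\|_{L^2\to L^2}\leq C\,c_{km}^{\,2}\,2^{-\beta|j-l|},
\]
with $\beta\in(0,1]$ the order of the space. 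The adjoint composition $\|(T_{km}^{(j)})^{*}T_{km}^{(l)}\|$ is handled symmetrically using \cref{horprop2}.

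\textbf{Summation and identification.} Cotlar--Stein then yields $\bigl\|\sum_j T_{km}^{(j)}\bigr\|_{L^2\to L^2}\leq C\,c_{km}$. Letting the outer and inner truncations tend to $\infty$ and $0$ respectively, the $L^2$-limit is identified with the principal-value operator $T_{km}f$; the annular cancellation (the integrated form of \cref{kernelprop4}) makes the boundary contributions telescope, so partial sums coincide with symmetric truncations of $T_{km}$. The main technical obstacle is the exponential decay $2^{-\beta|j-l|}$ in the almost-orthogonality step: since $(\RR^{N+1},\circ)$ is non-commutative and $\delta_r$ is anisotropic, no direct Fourier-side computation is available, and one must combine \cref{kernelprop4}, \cref{horprop1,horprop2}, and \cref{orderbeta} by hand so that size, smoothness, and scale separation jointly give a single $c_{km}$ at the end rather than a higher power.
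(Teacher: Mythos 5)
The paper itself does not supply a proof of \cref{l2bounded}; it is stated as a citation to Bramanti--Cerutti--Manfredini, who in turn build on Aimar's work on singular integrals over spaces of homogeneous type. Your proposal reconstructs exactly the Cotlar--Stein almost-orthogonality argument that underlies those references: annular dyadic pieces bounded in $L^1$ by \cref{kernelprop3}, annular mean-zero supplied by \cref{kernelprop4} together with $\delta_r$-homogeneity, and exponential decay of the compositions via the H\"ormander estimates \cref{horprop1,horprop2} and the order-$\beta$ property \cref{orderbeta}. One small point worth flagging so the reader does not trip on it: the H\"ormander estimate only applies when the scale separation $\|z\|\geq M\|\zeta\|$ holds, so for the finitely many pairs $(j,l)$ with $|j-l|$ below a fixed threshold you should fall back on the crude Young bound $\|T_{km}^{(j)}(T_{km}^{(l)})^{*}\|\leq\|K_{km}^{(j)}\|_{L^1}\|K_{km}^{(l)}\|_{L^1}\lesssim c_{km}^2$; Cotlar--Stein still closes since only finitely many terms per row lack the geometric decay. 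With that proviso the argument is sound and consistent with the route the paper points to.
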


\begin{theorem}(Weak-$L^1$ type estimates for $T_{km}$)
Let us define 
\begin{align*}
    T_{km}f(z)=\text{p.v.}\int\limits_{\RR^{N+1}} K_{km}(\zeta^{-1}\circ z)f(\zeta)\,d\zeta.
\end{align*} Then $T_{km}$ maps $L^{1}(\RR^{N+1}, d, |\cdot|)$ to $L^{1, \infty}(\RR^{N+1}, d, |\cdot|)$ boundedly, that is, for all $\lambda>0$
\begin{align*}
    |\{z\in \RR^{N+1}: |T_{km}f(z)|>\lambda\}|\leq \frac{c_{km}}{\lambda}\int\limits_{\RR^{N+1}}|f(z)|\,dz,
\end{align*} where $c_{km}$ is the constant that appears in \cref{horpropthm}.
\label{weak-type}
\end{theorem}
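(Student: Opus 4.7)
The plan is a standard Calder\'on--Zygmund weak-type argument on the homogeneous space $(\RR^{N+1},d,|\cdot|)$, whose three main inputs are the $L^2$-boundedness from \cref{l2bounded}, the Calder\'on--Zygmund decomposition \cref{caldzygdec}, and the H\"ormander cancellation \cref{horprop2} from \cref{horpropthm}. Fix $f\in L^1$ and $\lambda>0$. Apply \cref{caldzygdec} to $|f|$ at height $\lambda$ to obtain $f=g+\sum_i h_i$ with $\|g\|_\infty\lesssim\lambda$, $\|g\|_1\le\|f\|_1$, $\supp h_i\subset B_i=B(\zeta_i,r_i)$, $\int h_i=0$, and $\sum_i|B_i|\lesssim\|f\|_1/\lambda$. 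Then split
\[
|\{|T_{km}f|>\lambda\}|\leq |\{|T_{km}g|>\lambda/2\}|+|\{|T_{km}b|>\lambda/2\}|,\qquad b:=\textstyle\sum_i h_i.
\]

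For the good part, Chebyshev together with \cref{l2bounded} and the interpolation bound $\|g\|_2^2\le\|g\|_\infty\|g\|_1\lesssim\lambda\|f\|_1$ yields
\[
|\{|T_{km}g|>\lambda/2\}|\lesssim\frac{\|T_{km}g\|_2^2}{\lambda^2}\lesssim \frac{c_{km}^2\|g\|_2^2}{\lambda^2}\lesssim\frac{c_{km}^2\|f\|_1}{\lambda}.
\]
For the bad part, choose an enlargement $B_i^*$ of $B_i$ with radius $M' r_i$, where $M'$ is an absolute constant (depending only on the metric doubling constant, the quasi-symmetry of $d$, and the constant $M$ in \cref{horpropthm}) chosen so that for every $z\notin B_i^*$ and $\zeta\in B_i$ the admissibility condition $\|z^{-1}\circ\zeta_i\|\geq M\|\zeta^{-1}\circ\zeta_i\|$ is satisfied; by doubling, $|B_i^*|\lesssim|B_i|$, so $|\bigcup_i B_i^*|\lesssim\|f\|_1/\lambda$. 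On the complement $E:=\RR^{N+1}\setminus\bigcup_i B_i^*$ the cancellation $\int h_i=0$ lets me write
\[
T_{km}h_i(z)=\int_{B_i}\bigl(K_{km}(\zeta^{-1}\circ z)-K_{km}(\zeta_i^{-1}\circ z)\bigr)h_i(\zeta)\,d\zeta,
\]
and inserting the estimate from \cref{horprop2} gives
\[
|T_{km}h_i(z)|\leq c_{km}\int_{B_i}\frac{\|\zeta^{-1}\circ\zeta_i\|^\beta}{\|z^{-1}\circ\zeta_i\|^{Q+2+\beta}}|h_i(\zeta)|\,d\zeta.
\]
Integrating in $z$ over $\{\|z^{-1}\circ\zeta_i\|\gtrsim r_i\}$ and using that balls of radius $r$ in this homogeneous space have measure $\sim r^{Q+2}$, so $\int_{\|z^{-1}\circ\zeta_i\|\geq M'r_i}\|z^{-1}\circ\zeta_i\|^{-(Q+2+\beta)}dz\lesssim r_i^{-\beta}$, one obtains $\int_E|T_{km}h_i|\lesssim c_{km}\|h_i\|_1$. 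Summing and applying Chebyshev,
\[
|\{z\in E:|T_{km}b(z)|>\lambda/2\}|\lesssim \frac{c_{km}}{\lambda}\sum_i\|h_i\|_1\lesssim \frac{c_{km}\|f\|_1}{\lambda}.
\]
Combining the three contributions yields the claim, the final constant being at worst proportional to $c_{km}^2$, which may be absorbed into a redefined constant as per the remark following \cref{horpropthm}.

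The only delicate point is verifying that the dilation factor $M'$ defining $B_i^*$ can be fixed as an absolute constant (independent of $i$) so that the H\"ormander domain condition holds uniformly; this is a routine consequence of the quasi-triangle inequality combined with the quasi-symmetry of $d$ established in the preliminaries, and is the reason the symmetrized metric $\tilde d$ was introduced there. Everything else is a transcription of the classical argument, with the correct choice of variant of \cref{horpropthm} (namely \cref{horprop2}, since $\zeta$ is the integration variable in $T_{km}$) dictated by the form of the operator.
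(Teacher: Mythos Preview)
Your proof is correct and follows essentially the same Calder\'on--Zygmund strategy as the paper: split $f=g+\sum_i h_i$, handle the good part via Chebyshev and the $L^2$ bound, remove dilated balls around the bad pieces, and use the vanishing mean of $h_i$ together with \cref{horprop2} to estimate the remaining integral. The only noteworthy difference is that you are explicit about choosing the dilation factor $M'$ so that the admissibility hypothesis $\|\eta^{-1}\circ z\|\ge M\|\zeta^{-1}\circ z\|$ of \cref{horpropthm} is guaranteed, whereas the paper simply takes $2\BB_i$; your treatment is the more careful one on this point.
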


\begin{proof}
   Let $f\in L^1(\RR^{N+1}, d, |\cdot|)$ and $\lambda>0$ be fixed. We apply the Cald\'eron-Zygmund type decomposition adapted for homogeneous spaces (as in \cref{caldzygdec}) to the function $f$ and level $\lambda$. In consequence, we obtain 
   \begin{align}\label{decomp1}f=g+\sum h_i\mbox{ where }||g||_{L^\infty}\leq \lambda,\end{align} the functions $h_i$ are supported in balls $\BB_i$ whose total measure is controlled by $\sum|\BB_i|\leq \tilde{D}\lambda^{-1}||f||_{L^1}$.
   
   The set 
   \begin{align*}
       \{z\in\RR^{N+1}:T_{km}f(z)>\lambda\}
   \end{align*} is contained in the union of the sets
    \begin{align*}
       \underbrace{\left\{z\in\RR^{N+1}:T_{km}g(z)>\frac{\lambda}{2}\right\}}_{G}\mbox{ and }\underbrace{\left\{z\in\RR^{N+1}:T_{km}\left(\sum h_i\right)(z)>\frac{\lambda}{2}\right\}}_{B},
   \end{align*} hence, it suffices to estimate their measures. For the first of these sets $G$, we use \cref{l2bounded}. The estimate of the good set is as follows:
   \begin{align}
       \left|\left\{z\in\RR^{N+1}:T_{km}g(z)>\frac{\lambda}{2}\right\}\right|&\overset{\redlabel{cheby}{a}}{\leq} \frac{C}{\lambda^2}||T_{km}g||^2_{L^2}\nonumber\\
       &\overset{\redlabel{l2b}{b}}{\leq} \frac{c_{km}}{\lambda^2}||g||_{L^2}^2\nonumber\\
       &=\frac{c_{km}}{\lambda^2}\int\limits_{\RR^{N+1}}|g(z)|^2\,dz\nonumber\\
       &\overset{\redlabel{gbound}{c}}{\leq} \frac{c_{km}}{\lambda}\int\limits_{\RR^{N+1}}|g(z)|\,dz\nonumber\\
       &\leq \frac{c_{km}}{\lambda}\int\limits_{\RR^{N+1}}|f(z)|\,dz,\label{l1est1}
   \end{align} where \redref{cheby}{a} follows from the Chebyshev's inequality, \redref{l2b}{b} follows from \cref{l2bounded}, \redref{gbound}{c} is a consequence of \cref{decomp1}.
   
   To estimate the measure of the remaining set $B$, we begin by excising the supports of $h_i$ from $B$ since they are already controlled in the desired way. Indeed, in the ensuing calculation, this removal is crucial to create a gap between points in $\BB_i$ and their complements which allows the application of \cref{horpropthm}. 
   
   Observe that due to the doubling property,
   \begin{align}\label{l1est2}
       \left|\bigcup_i 2\BB_i\right|\leq C\sum_i|\BB_i|\leq \frac{C}{\lambda}||f||_{L^1}. 
   \end{align}
   Therefore, instead of $B$ we look at the new set
   \begin{align*}
       \left\{z\in\RR^{N+1}\setminus \bigcup_i 2\BB_i:T_{km}\left(\sum h_i\right)(z)>\frac{\lambda}{2}\right\}.
   \end{align*}
   Once more with an application of Chebyshev's inequality, we have
      \begin{align}
       \left|\left\{z\in\RR^{N+1}\setminus \bigcup_i 2\BB_i:T_{km}\left(\sum h_i\right)(z)>\frac{\lambda}{2}\right\}\right|&\leq \frac{C}{\lambda}\int\limits_{\RR^{N+1}\setminus \bigcup_i 2\BB_i} \left|\sum_{i} T_{km}h_i(z)\right|\,dz\nonumber\\
       &\leq \frac{C}{\lambda}\sum_{i}\underbrace{\int\limits_{\RR^{N+1}\setminus \bigcup_i 2\BB_i} \left|T_{km}h_i(z)\right|\,dz}_{E_i}\label{l1est3}
   \end{align}
   Let the ball $\BB_i$ be centered at $z_i\in\RR^{N+1}$ and have radius $r_i$, then
   \begin{align}
       E_i&=\int\limits_{\RR^{N+1}\setminus \bigcup_i 2\BB_i} \left|T_{km}h_i(z)\right|\,dz\nonumber\\
       &=\int\limits_{\RR^{N+1}\setminus \bigcup_i 2\BB_i} \left|\int\limits_{\BB_i} K_{km}(\zeta^{-1}\circ z)h_i(\zeta)\,d\zeta\right|\,dz\nonumber\\
       &\overset{\redlabel{vanish}{a}}{=}\int\limits_{\RR^{N+1}\setminus \bigcup_i 2\BB_i} \left|\int\limits_{\BB_i} \left(K_{km}(\zeta^{-1}\circ z)-K_{km}(z_i^{-1}\circ z)\right)h_i(\zeta)\,d\zeta\right|\,dz\nonumber\\
       &\leq \int\limits_{\RR^{N+1}\setminus 2\BB_i} \int\limits_{\BB_i}\left|K_{km}(\zeta^{-1}\circ z)-K_{km}(z_i^{-1}\circ z)\right||h_i(\zeta)|\,d\zeta\,dz\nonumber\\
       &\overset{\redlabel{horprop}{b}}{\leq} c_{km}\int\limits_{\RR^{N+1}\setminus 2\BB_i} \int\limits_{\BB_i}\frac{||\zeta^{-1}\circ z_i||^{\beta}}{||z^{-1}\circ z_i||^{Q+2+\beta}}|h_i(\zeta)|\,d\zeta\,dz\nonumber\\
       &\overset{\redlabel{fubini}{c}}{=} c_{km} \int\limits_{\BB_i}\left(\int\limits_{\RR^{N+1}\setminus 2\BB_i}\frac{||\zeta^{-1}\circ z_i||^{\beta}}{||z^{-1}\circ z_i||^{Q+2+\beta}}\,dz\right)|h_i(\zeta)|\,d\zeta\nonumber\\
       &= c_{km} \int\limits_{\BB_i}\left(\sum_{k=1}^\infty\int\limits_{2^{k+1}\BB_i\setminus 2^{k}\BB_i}\frac{||\zeta^{-1}\circ z_i||^{\beta}}{||z^{-1}\circ z_i||^{Q+2+\beta}}\,dz\right)|h_i(\zeta)|\,d\zeta\nonumber\\
       &\leq  c_{km} \int\limits_{\BB_i}\left(\sum_{k=1}^\infty\int\limits_{2^{k+1}\BB_i\setminus 2^{k}\BB_i}\frac{r_i^{\beta}}{(2^k r_i)^{Q+2+\beta}}\,dz\right)|h_i(\zeta)|\,d\zeta\nonumber\\
       &\leq  c_{km} \int\limits_{\BB_i}\left(\sum_{k=1}^\infty\frac{r_i^{\beta}(2^k r_i)^{Q+2}}{(2^k r_i)^{Q+2+\beta}}\right)|h_i(\zeta)|\,d\zeta\nonumber\\
       &\leq  c_{km} \int\limits_{\BB_i}\left(\sum_{k=1}^\infty\frac{1}{(2^\beta)^k}\right)|h_i(\zeta)|\,d\zeta\leq c_{km} \int\limits_{\BB_i}|h(\zeta)|\,d\zeta,\label{l1est4}
   \end{align} where \redref{vanish}{a} is due to the fact that $\int h_i=0$, \redref{horprop}{b} is due to \cref{horprop2}, and \redref{fubini}{c} is due to Fubini's theorem. 
   
   
   Using \cref{l1est4} in \cref{l1est3} results in the following estimate
    \begin{align}
       \left|\left\{z\in\RR^{N+1}\setminus \bigcup_i 2\BB_i:T_{km}\left(\sum h_i\right)(z)>\frac{\lambda}{2}\right\}\right|&\leq \frac{C}{\lambda}\sum_{i} E_i\nonumber\\
       &\lesssim \frac{c_{km}}{\lambda}\sum_{i}\int\limits_{\BB_i}|h_i(\zeta)|\,d\zeta\nonumber\\
       &\lesssim \frac{c_{km}}{\lambda}\int\limits_{\RR^{N+1}}|f(\zeta)|\,d\zeta.\label{l1est5}
   \end{align}
   Now, the weak-$L^1$-type estimate is a consequence of a combination of the three estimates in \cref{l1est1}, \cref{l1est2}, and \cref{l1est5}.
\end{proof}

\begin{theorem}(weak $L^1$-type estimate for $\mathcal{M}_{T_{km},s}^{\#}$)
\label{weak-type-sharp}
Let $s>1$, then the grand truncated maximal function $\mathcal{M}_{T_{km},s}^{\#}$ maps $L^{1}(\RR^{N+1}, d, |\cdot|)$ to $L^{1, \infty}(\RR^{N+1}, d, |\cdot|)$ boundedly.
\end{theorem}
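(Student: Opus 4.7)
The plan is to reduce the weak-$(1,1)$ statement for the grand truncated maximal function to the standard weak-$(1,1)$ bound for the uncentered Hardy--Littlewood maximal operator $\mathcal{M}$ on the space of homogeneous type $(\mathbb{R}^{N+1}, d, |\cdot|)$. The bridge is a pointwise domination of the form $\mathcal{M}^{\#}_{T_{km}, s} f(x) \lesssim c_{km}\, \mathcal{M} f(x)$, obtained directly from the H\"ormander regularity of the kernel $K_{km}$ established in \cref{horpropthm}.

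To produce this pointwise bound, fix $x \in \mathbb{R}^{N+1}$, an arbitrary ball $B = B_r(x_0)$ containing $x$, and two points $y, z \in B$. Writing the difference
\begin{align*}
T_{km}(f\chi_{\mathbb{R}^{N+1}\setminus sB})(y) - T_{km}(f\chi_{\mathbb{R}^{N+1}\setminus sB})(z) = \int_{\mathbb{R}^{N+1}\setminus sB} \bigl[K_{km}(\eta^{-1}\circ y) - K_{km}(\eta^{-1}\circ z)\bigr] f(\eta)\,d\eta,
\end{align*}
I would first assume $s$ is large enough (say $s \geq 2M + 1$, with $M$ from \cref{horpropthm}) so that every $\eta \in \mathbb{R}^{N+1}\setminus sB$ satisfies $\|\eta^{-1}\circ z\| \geq M \|y^{-1}\circ z\|$. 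Then \cref{horprop1} is applicable, giving
\begin{align*}
|T_{km}(f\chi_{\mathbb{R}^{N+1}\setminus sB})(y) - T_{km}(f\chi_{\mathbb{R}^{N+1}\setminus sB})(z)| \leq c_{km}\int_{\mathbb{R}^{N+1}\setminus sB} \frac{\|y^{-1}\circ z\|^{\beta}}{\|\eta^{-1}\circ z\|^{Q+2+\beta}}|f(\eta)|\,d\eta.
\end{align*}
Using $\|y^{-1}\circ z\| \lesssim r$ and decomposing the complement of $sB$ into the dyadic annuli $2^j sB \setminus 2^{j-1}sB$, the kernel is uniformly bounded by $c_{km}\,r^{\beta}/(2^{j}sr)^{Q+2+\beta}$ on the $j$-th annulus, while the integral of $|f|$ over the $j$-th annulus is controlled by $|2^{j+1}sB|\cdot \mathcal{M}f(x)$. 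Summing the resulting geometric series in $2^{-j\beta}$ yields the desired pointwise bound by $c_{km}\mathcal{M}f(x)$, uniformly in the choice of $B$ and in $y,z \in B$.

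For $1 < s < 2M+1$ I would split the complement as $\mathbb{R}^{N+1}\setminus sB = \bigl((2M+1)B \setminus sB\bigr) \cup \bigl(\mathbb{R}^{N+1}\setminus (2M+1)B\bigr)$. The second piece is handled exactly as above. For the first piece, since the integration variable $\eta$ lies at distance $\gtrsim (s-1)r$ from both $y$ and $z$, the pointwise size estimate \cref{kernelprop3} gives $|K_{km}(\eta^{-1}\circ y)| \lesssim c_{km}/((s-1)r)^{Q+2}$, and integrating against $f$ over $(2M+1)B \setminus sB$ yields a bound of the form $c_{km,s} \mathcal{M}f(x)$. Combining the two pieces and taking the essential supremum over $y,z \in B$ and then the supremum over $B \ni x$ produces
\begin{align*}
\mathcal{M}^{\#}_{T_{km},s} f(x) \lesssim c_{km,s}\, \mathcal{M}f(x) \qquad \text{for a.e. } x \in \mathbb{R}^{N+1}.
\end{align*}
The weak-$L^1$ bound then follows by invoking the weak-type $(1,1)$ boundedness of $\mathcal{M}$ on $(\mathbb{R}^{N+1},d,|\cdot|)$, a standard fact for spaces of homogeneous type.

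The main delicate point is just the bookkeeping in the small-$s$ case: the constants depend on $s$ through $(s-1)^{-(Q+2)}$, and one must verify that the interplay between the H\"ormander threshold $M$ and the truncation parameter $s$ is handled uniformly in $f$. No deeper obstacle arises, because the weak-$L^1$ estimate for $T_{km}$ itself (\cref{weak-type}) is not actually needed for this argument; the H\"ormander smoothness of $K_{km}$ already yields a pointwise control of the oscillation $\mathcal{M}^{\#}_{T_{km},s}f$ by the Hardy--Littlewood maximal function, which is the sharpest possible route in this setting.
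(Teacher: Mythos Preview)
Your proposal is correct and follows essentially the same route as the paper: a pointwise bound $\mathcal{M}^{\#}_{T_{km},s}f \lesssim c_{km}\mathcal{M}f$ obtained by writing the oscillation as a kernel-difference integral, applying the H\"ormander regularity estimate \cref{horprop1}, and summing over dyadic annuli, then invoking the weak-$(1,1)$ bound for $\mathcal{M}$. The paper's proof uses $s^k$-adic annuli rather than $2^j s$-adic ones and does not explicitly separate out the small-$s$ regime; your treatment of $1<s<2M+1$ via the size bound \cref{kernelprop3} on the inner shell is a legitimate refinement that makes the applicability of the H\"ormander threshold $M$ transparent, but the underlying idea is identical.
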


\begin{proof}
   We recall the definition of the grand truncated maximal function. 
   \begin{align*}
       \mathcal{M}^{\#}_{T_{km}, s} f(\zeta) = \sup_{\BB: \BB \ni \zeta} \esssup \limits_{y,z \in \BB} \left| T(f \chi_{\mathbb{R}^{N+1}\setminus s \BB})(y)-T(f \chi_{\mathbb{R}^{N+1}\setminus s \BB})(z) \right|.
   \end{align*} For the proof, we will estimate the grand truncated maximal function by the uncentered Hardy-Littlewood maximal function $\mathcal{M}$ in a pointwise manner, and this will conclude the proof of the Theorem since $\mathcal{M}$ is weak type $(1, 1)$.
   
   Fix a point $\zeta\in\RR^N$ and choose a ball $\BB\ni \zeta$ of radius $r$. Let $y,z\in \BB$ and consider the expression
   \begin{align*}
       \left|T_{km}(f\chi_{\RR^{N+1}\setminus s\BB})(y)-T_{km}(f\chi_{\RR^{N+1}\setminus s\BB})(z)\right|&=\left|\int\limits_{\RR^{N+1}\setminus s\BB} (K_{km}(\eta^{-1}\circ y)-K_{km}(\eta^{-1}\circ z))f(\eta)\,d\eta\right|\\
       &\leq\int\limits_{\RR^{N+1}\setminus s\BB} \left|K_{km}(\eta^{-1}\circ y)-K_{km}(\eta^{-1}\circ z)\right||f(\eta)|\,d\eta\\
       &\leq c_{km}\int\limits_{\RR^{N+1}\setminus s\BB}\frac{||y^{-1}\circ z||^{\beta}}{||\eta^{-1}\circ z||^{Q+2+\beta}}|f(\eta)|\,d\eta\\
       &= c_{km}\sum_{k=1}^\infty\int\limits_{s^{k+1}\BB\setminus s^k\BB}\frac{||y^{-1}\circ z||^{\beta}}{||\eta^{-1}\circ z||^{Q+2+\beta}}|f(\eta)|\,d\eta\\
       &\leq c_{km}\sum_{k=1}^\infty\int\limits_{s^{k+1}\BB\setminus s^k\BB}\frac{(r)^{\beta}}{(s^k r)^{Q+2+\beta}}|f(\eta)|\,d\eta\\
       &\leq c_{km}\sum_{k=1}^\infty\frac{(r)^{\beta}(s^k r)^{Q+2}}{(s^k r)^{Q+2+\beta}}\fint\limits_{s^{k+1}\BB}|f(\eta)|\,d\eta\\
       &\leq \tilde{c}_{km}\fint\limits_{s^{k+1}\BB}|f(\eta)|\,d\eta.
   \end{align*}
   Now, taking supremum over all $y,z\in\BB$ followed by supremum over all $\BB\ni \zeta$ gives us
   \begin{align*}
       \mathcal{M}^{\#}_{T_{km}, s} f(\zeta) \leq c_{km}\mathcal{M}f(\zeta).
   \end{align*}
   
\end{proof}
\subsection{Local pointwise estimates}

Now we are in a position to prove the following local sparse domination result. The following result essentially proves the poitwise domination of commutators by sparse operators. The proof is quite involved and probably is of independent interest due to its applicability for a large class of operators. For any ultraparabolic cube $\PP$, $\PP^*$ denotes the dilated cube  $\delta_{c_{N}}\PP$, where $c_{N}$ is a fixed dimensional constant to be chosen later.

\begin{theorem}[Pointwise estimates for the commutator]
\label{Pointwise estimates-comm}
Let $\PP$ be any ultraparabolic cube. Then for any compactly supported bounded function $f$ there exists a $\frac{1}{2}$-sparse family $\GG\subset \mathscr{U}(\PP)$ such that

\begin{align}
\label{pointwise-sparse-local-comm}
\big|[b, T_{km}](f \chi_{\PP^*})(z)\big|\lesssim_{N}\,  c_{km}\,& \left[\sum\limits_{\TT\in \GG}|b-b_{\TT^*}|\left(\fint_{\TT^{*}}|f|\right) \chi_{\TT}(z)+\sum\limits_{\TT\in \GG}\left(\fint_{\TT^{*}}|(b-b_{\TT^{*}})f|\right) \chi_{\TT}(z)\right]\\
&\qquad+ c_{km} \|b\|_{\text{BMO}} \sum\limits_{\TT\in \GG}\left(\frac{1}{|\TT^*|}\int_{\TT^{*}}|f|\right) \chi_{\TT}(z)\notag   
\end{align}
holds for a.e. $z\in \PP$.
\end{theorem}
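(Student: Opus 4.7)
The plan is to first establish a pointwise sparse domination for the bare operator $T_{km}$, and then lift it to a commutator bound via the subtract-a-constant trick combined with a joint stopping-time construction that also tracks the oscillation of $b$. The ingredients for the first step are already in place: \cref{weak-type} gives weak-$(1,1)$ boundedness of $T_{km}$ with constant $c_{km}$, and \cref{weak-type-sharp} gives the same for the grand truncated maximal operator $\mathcal{M}^{\#}_{T_{km},s}$. Combined with the standard weak-$(1,1)$ bound for the uncentered Hardy-Littlewood maximal function on $(\RR^{N+1},d,|\cdot|)$, these are exactly the inputs required by the pointwise sparse principle of Theorem~1.1 in \cite{Lerner-Ombrosi-pointwaise-sparse2020}, transplanted to our setting via the Hyt\"onen-Kairema dyadic grids $\UU$. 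Applying it produces, for any ultraparabolic cube $\PP$ and any bounded $g$ supported in $\PP^{*}$, a $\tfrac12$-sparse family of subcubes $\TT\in\UU(\PP)$ such that
\[
|T_{km}(g\chi_{\PP^{*}})(z)|\lesssim c_{km}\sum_{\TT}\fint_{\TT^{*}}|g|\,\chi_{\TT}(z)\qquad\text{a.e. on }\PP.
\]

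At each stage of the proposed iteration I would use the commutator identity localised to the current cube $\TT$,
\[
[b,T_{km}](f\chi_{\TT^{*}})(z)=(b(z)-b_{\TT^{*}})\,T_{km}(f\chi_{\TT^{*}})(z)-T_{km}\bigl((b-b_{\TT^{*}})f\chi_{\TT^{*}}\bigr)(z).
\]
Combining this with the sparse bound for $T_{km}$ on the good set of $\TT$ (the complement in $\TT$ of all next-generation stopping subcubes) produces exactly the first two summands of \eqref{pointwise-sparse-local-comm} at level $\TT$, namely $c_{km}\,|b(z)-b_{\TT^{*}}|\fint_{\TT^{*}}|f|$ and $c_{km}\fint_{\TT^{*}}|(b-b_{\TT^{*}})f|$. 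The passage from the localised commutator $[b,T_{km}](f\chi_{\TT^{*}})$ to the globally localised one $[b,T_{km}](f\chi_{\PP^{*}})$ on $\TT$ is handled by $\mathcal{M}^{\#}_{T_{km},s}$ applied both to $f$ and to the weighted function $(b-b_{\PP^{*}})f$; the resulting far-field contributions are controlled by the choice of stopping thresholds.

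Concretely, starting from $\PP$, I would declare a subcube $\TT'\subsetneq\PP$ to be a stopping subcube if at least one of
\[
\mathcal{M}(f\chi_{\PP^{*}}),\ \mathcal{M}((b-b_{\PP^{*}})f\chi_{\PP^{*}}),\ \mathcal{M}^{\#}_{T_{km},s}(f\chi_{\PP^{*}}),\ \mathcal{M}^{\#}_{T_{km},s}((b-b_{\PP^{*}})f\chi_{\PP^{*}})
\]
exceeds a large fixed multiple of its $\PP^{*}$-average on $\TT'$, or if $|b_{(\TT')^{*}}-b_{\PP^{*}}|>c_{0}\|b\|_{\mathrm{BMO}}$ for a fixed large $c_{0}$, and then retain only the maximal such cubes. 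The four weak-$(1,1)$ bounds together with the $L^{1}$--$\mathrm{BMO}$ inequality force the total measure of the stopping subcubes to be at most $\tfrac12|\PP|$, so recursing yields a $\tfrac12$-sparse family $\GG\subset\UU(\PP)$ in the sense of \cref{sparse-remark}. The $b$-oscillation threshold forces $|b_{(\TT')^{*}}-b_{\TT^{*}}|\leq c_{0}\|b\|_{\mathrm{BMO}}$ between each stopping generation; summing the cross terms that arise when bridging the localised identity from $\TT$ to its stopping subcubes produces precisely the third family $c_{km}\|b\|_{\mathrm{BMO}}\sum_{\TT}\fint_{\TT^{*}}|f|\,\chi_{\TT}$ in \eqref{pointwise-sparse-local-comm}.

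The main obstacle is balancing all five exceptional conditions into a single halved-measure estimate so that $\GG$ is genuinely sparse; the weak-$(1,1)$ constants from \cref{weak-type}, \cref{weak-type-sharp} and the $L^{1}$--$\mathrm{BMO}$ estimate must be compatibly calibrated with the two stopping thresholds, and the dilation factor $c_{N}$ defining $\TT^{*}$ has to be large enough for the grand maximal truncated operator to dominate the far-field terms. Once this bookkeeping is carried out, the generation-by-generation application of the Lerner-Ombrosi pointwise principle together with the telescoping BMO-oscillation bound yields \eqref{pointwise-sparse-local-comm}.
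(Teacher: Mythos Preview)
Your proposal is correct and follows essentially the same approach as the paper: a recursive Calder\'on--Zygmund stopping-time construction driven by the weak-$(1,1)$ bounds of \cref{weak-type} and \cref{weak-type-sharp} applied to both $f$ and $(b-b_{\PP^*})f$, combined with the identity $[b,T_{km}]=[b-c,T_{km}]$ and a telescoping along the resulting chain of cubes. The only substantive difference is that you add an explicit BMO-oscillation stopping condition to force $|b_{(\TT')^*}-b_{\PP^*}|\le c_0\|b\|_{\mathrm{BMO}}$ between generations, whereas the paper omits this fifth condition and instead bounds $|b_{\TT_j^*}-b_{\TT_{j-1}^*}|\lesssim\|b\|_{\mathrm{BMO}}$ directly for consecutive stopping cubes; your formulation is the more standard one in the spirit of \cite{Lerner-comm}.
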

\begin{proof}
For simplicity let us denote $\displaystyle\ev{f}_{\TT^*}=\fint_{\TT^{*}}|f|\,dz$. Define the following operator 
 \begin{align*}
\mathcal{N}_{\PP}f(z):=\max\{|T_{km}(f \chi_{\PP^*})(z)|, \mathcal{M}^{\sharp}_{T_{km}, s}(f \chi_{\PP^*})(z)\}.
 \end{align*}
Theorem~\ref{weak-type} and Theorem~\ref{weak-type-sharp} together imply that $\mathcal{N}$ is weak-type $(1, 1)$. Let us consider the following sets 
\begin{align*}
&F^{1}_{\PP,\,bad}:=\left\{ z\in \PP: \max\left\{\mathcal{M}f(z), \mathcal{N}(f\chi_{\PP^*})(z)\right\}> \alpha_{N}\ c_{km} \langle f \rangle_{\PP^*}\right\},\\
& F^{2}_{\PP,\,bad}:=\left\{z\in \PP: \mathcal{N}((b-b_{\PP^*})f)(z)> \alpha_{N}\ c_{km}  \langle (b-b_{\PP^*})f \rangle_{\PP^*} \right\}.
\end{align*}
Denote $F_{\PP,\,bad}=F^{1}_{\PP,\,bad}\cup F^{2}_{\PP,\,bad}$. Since $\mathcal{M}$ and $\mathcal{N}$ are weak-type $(1, 1)$, it is possible to choose large enough $\alpha_N$ such that
\begin{align}
|F_{\PP,\,bad}|\lesssim O\left(\frac{1}{\alpha_N}\right)|\PP|\leq \frac{1}{2}|\PP|.    
\end{align}
Note that we have used the fact $|\PP^*|\lesssim |\PP|$ in the above estimate. To construct appropriate sparse family, we shall decompose the set $F_{\PP,\,bad}$ using \CZ~decomposition. To that end, let us consider
\begin{align}
F_{\lambda, \mathcal{P}}:=\{z\in \PP: \mathcal{M}^{\PP}(\chi_{F_{\PP,\,bad}})(z)>\lambda\}.
\end{align}
Local \CZ~decomposition for small enough $\lambda$ produces a collection of pairwise disjoint ultraparabolic cubes $\GG_{\PP}\subset \UU(\PP)$ such that $F_{\PP, bad}\subset F_{\lambda, \PP}=\cup_{\TT\in \GG_{\PP}}\TT$ and
\begin{align}
\lambda<\frac{|\TT \cap F_{\PP, bad}|}{|\TT|}\leq \frac{1}{2},    
\end{align}
for all $\TT\in \GG_{\PP}$. The above implies 
\begin{align}\sum\limits_{\TT\in \GG_{\PP}} |\TT|\leq \frac{1}{\lambda} \sum\limits_{\TT\in \GG_{\PP}} |\TT \cap F_{\PP,\,bad}|\lesssim \frac{1}{\lambda} |F_{\PP,\,bad}| \lesssim O\left(\frac{1}{\alpha_N \lambda }\right)|\PP| \leq \frac{1}{2} |\PP|,
\label{sparseness-comm}
\end{align}
provided we choose $\alpha_N$ very large.

Define $\GG_{0}=\{\PP\}$ and $\GG_{1}=\GG_{\PP}$. For each $\TT\in \GG_{\PP}$, similar procedure generates the collection $\GG_{\TT}$ and we combine them to define $\GG_{2}=\bigcup_{\TT\in \GG_{\PP}} \GG_{\TT}$. Iteratively we define the collection $\GG_{k}$ for all $k=0, 1, 2, \ldots$ Therefore, from Remark~\ref{sparse-remark} and \eqref{sparseness-comm} it follows that the collection $\GG=\cup_{k\geq 0} \GG_{k}$ is a sparse collection.

Rest of the proof is dedicated to conclude that $\GG$ is a required collection for which \eqref{pointwise-sparse-local-comm} holds true. The recursive process implies that $\sum_{\TT\in \GG_{k}}|\TT|\leq \frac{1}{2^k}|\PP|$, consequently, $|\GG_k|\to 0$ as $k\to \infty$. Therefore, up to a measure zero set, for each point $z\in \PP$ there exists a largest $k=k(z)\in\NN$ depending on $z$ such that $z\in \GG_{j}$ for each $0\leq j\leq k(z)$. Hence, there exist a chain of ultraparabolic dyadic cubes $\{\TT_{j}\}_{j=0}^{k(z)}$ with $\TT_{j}\in \GG_j$ such that $z\in \TT_{k(z)}\subset \TT_{k(z)-1}\subset\cdots\subset \TT_{0}=\PP$. Since $[b, T]f=[b-c, T]f$ for any constant $c\in \RR$, note that
\begin{align}
\nonumber\ek{[b, T_{km}](f \chi_{\PP^*})(z)}&\leq \sum_{j=1}^{k(z)}\ek{[b, T_{km}](f \chi_{\TT_{j-1}^*\setminus \TT_{j}^*})(z)}+\ek{[b, T_{km}](f \chi_{\TT^{*}_{k(z)}})(z)|}\\
&\nonumber = \sum_{j=1}^{k(z)}\ek{[b-b_{\TT_{j-1}^*}, T_{km}](f \chi_{\TT_{j-1}^*\setminus \TT_{j}^*})(z)}+\ek{[b-b_{\TT_{k(z)}^*}, T_{km}](f \chi_{\TT^{*}_{k(z)}})(z)}\\
&\nonumber \lesssim \sum_{j=1}^{k(z)} \left(\ek{b-b_{\TT_{j-1}^*}}\ek{T_{km}(f \chi_{\TT_{j-1}^*\setminus \TT_{j}^*})(z)}+\ek{T_{km}((b-b_{\TT_{j-1}^*})f \chi_{\TT_{j-1}^*\setminus \TT_{j}^*})(z)}\right)\\
&\nonumber\qquad  + \left(\ek{b-b_{\TT_{k(z)}^*}}\ek{T_{km}(f \chi_{\TT^{*}_{k(z)}})(z)}+\ek{T_{km}((b-b_{\TT_{k(z)}^*})f \chi_{\TT^{*}_{k(z)}})(z)}\right)\\
&\nonumber \lesssim \sum_{j=1}^{k(z)} \left(\underbrace{\ek{b-b_{\TT_{j-1}^*}} \ek{T_{km}(f \chi_{\TT_{j-1}^*\setminus \TT_{j}^*})(z)}}_{I_j}+\underbrace{\ek{T_{km}((b-b_{\TT_{j-1}^*})f \chi_{\TT_{j-1}^*\setminus \TT_{j}^*})(z)}}_{II_j}\right)\\
&\qquad\qquad+c_{km}\left(\ek{(b-b_{\TT_{k(z)}^*})}\ev{f}_{\TT^{*}_{k(z)}}+ \ev{(b-b_{\TT_{k(z)}^*})f}_{\TT^{*}_{k(z)}}\right),
\label{pointwise-recursive}
\end{align}
where in the last inequality we have used the fact that $z\notin \GG_{k(z)+1}$. 

Now, we analyze the summands $I_j$ and $II_j$ in the sum in \cref{pointwise-recursive} for $j\in\{1,2,\ldots,k(z)\}$. Note that for each $z'\in  \TT_{j}$, we can write 
\begin{align}\label{estIj}
I_j&=\ek{b-b_{\TT_{j-1}^*}} \ek{T_{km}(f \chi_{\TT_{j-1}^*\setminus \TT_{j}^*})(z)}\\
&\leq \ek{b-b_{\TT_{j-1}^*}}\ek{T_{km}(f \chi_{\TT_{j-1}^*\setminus \TT_{j}^*})(z)-T_{km}(f \chi_{\TT_{j-1}^*\setminus \TT_{j}^*})(z')}\nonumber\\
&\qquad+\ek{b-b_{\TT_{j-1}^*}}\ek{T_{km}(f \chi_{\TT_{j-1}^*\setminus \TT_{j}^*})(z')},\nonumber\\
&\leq \ek{b-b_{\TT_{j-1}^*}} \inf_{\zeta \in \TT_{j}} \mathcal{M}^{\sharp}_{T_{km}, c_{N}}(f \chi_{\TT_{j-1}^*})(\zeta)\nonumber\\
&\qquad+ \ek{b-b_{\TT_{j-1}^*}}\left(\inf_{\zeta \in \TT_{j}}\ek{T_{km}(f \chi_{\TT_{j-1}^*})(\zeta)}+ \inf_{\zeta \in \TT_{j}}\ek{T_{km}(f \chi_{\TT_{j}^*})(\zeta)}\right),\nonumber
\end{align} where the first inequality is obtained by adding and subtracting a suitable term.
Similarly, we have 

\begin{align}\label{estIIj}
II_j&=\ek{T_{km}((b-b_{\TT_{j-1}^*})f \chi_{\TT_{j-1}^*\setminus \TT_{j}^*})(z)}\\
&\leq\ek{T_{km}((b-b_{\TT_{j-1}^*})f \chi_{\TT_{j-1}^*\setminus \TT_{j}^*})(z)-T_{km}((b-b_{\TT_{j-1}^*})f \chi_{\TT_{j-1}^*\setminus \TT_{j}^*})(z')}\nonumber\\
&\qquad+\ek{T_{km}((b-b_{\TT_{j-1}^*})f \chi_{\TT_{j-1}^*\setminus \TT_{j}^*})(z')},\nonumber\\
&\leq \inf_{\zeta \in \TT_{j}} \mathcal{M}^{\sharp}_{T_{km}, c_{N}}((b-b_{\TT_{j-1}^*})f \chi_{\TT_{j-1}^*})(\zeta)\nonumber\\
&\qquad+\left(\inf_{\zeta \in \TT_{j}}\ek{T_{km}((b-b_{\TT_{j-1}^*})f \chi_{\TT_{j-1}^*})(\zeta)}+ \inf_{\zeta \in \TT_{j}}\ek{T_{km}((b-b_{\TT_{j-1}^*})f \chi_{\TT_{j}^*})(\zeta)}\right)\nonumber\\
&= \inf_{\zeta \in \TT_{j}} \mathcal{M}^{\sharp}_{T_{km}, c_{N}}((b-b_{\TT_{j-1}^*})f \chi_{\TT_{j-1}^*})(\zeta)\nonumber\\
&\qquad+\inf_{\zeta \in \TT_{j}}\ek{T_{km}((b-b_{\TT_{j-1}^*})f \chi_{\TT_{j-1}^*})(\zeta)}\nonumber\\
&\qquad\qquad+ \inf_{\zeta \in \TT_{j}}\ek{T_{km}((b-b_{\TT_{j}^*}+b_{\TT_{j}^*}-b_{\TT_{j-1}^*})f \chi_{\TT_{j}^*})(\zeta)}\nonumber\\
&\leq \inf_{\zeta \in \TT_{j}} \mathcal{M}^{\sharp}_{T_{km}, c_{N}}((b-b_{\TT_{j-1}^*})f \chi_{\TT_{j-1}^*})(\zeta)\nonumber\\
&\qquad+\inf_{\zeta \in \TT_{j}}\ek{T_{km}((b-b_{\TT_{j-1}^*})f \chi_{\TT_{j-1}^*})(\zeta)}\nonumber\\
&\qquad\qquad+\inf_{\zeta \in \TT_{j}}\ek{T_{km}((b-b_{\TT_{j}^*})f \chi_{\TT_{j}^*})(\zeta)}+\ek{b_{\TT_{j}^*}-b_{\TT_{j-1}^*}}\inf_{\zeta \in \TT_{j}} \ek{T_{km}(f\chi_{\TT^*_{j}}(\zeta)}.\nonumber\\
&\leq \inf_{\zeta \in \TT_{j}} \mathcal{M}^{\sharp}_{T_{km}, c_{N}}((b-b_{\TT_{j-1}^*})f \chi_{\TT_{j-1}^*})(\zeta)\nonumber\\
&\qquad+\inf_{\zeta \in \TT_{j}}\ek{T_{km}((b-b_{\TT_{j-1}^*})f \chi_{\TT_{j-1}^*})(\zeta)}\nonumber\\
&\qquad\qquad+\inf_{\zeta \in \TT_{j}}\ek{T_{km}((b-b_{\TT_{j}^*})f \chi_{\TT_{j}^*})(\zeta)}+\|b\|_{\text{BMO}}\inf_{\zeta \in \TT_{j}} \ek{T_{km}(f\chi_{\TT^*_{j}}(\zeta)}.\nonumber
\end{align}

Now to complete the proof we observe that $|\TT_{j}\setminus (F_{\TT_{j}, bad}\cup F_{\TT_{j-1}, bad})|>0$, indeed, $$|\TT_{j}\cap(F_{\TT_{j}, bad}\cup F_{\TT_{j-1}, bad})|\leq |F_{\TT_{j}, bad}|+ |\TT_{j}\cap F_{\TT_{j-1}, bad}|\leq \frac{1}{2}|\TT_j|+\frac{1}{4}|\TT_j|.$$ 

On account of the above, we obtain the following estimates 

$\displaystyle
\begin{array}{llcl}
\inf\limits_{\zeta \in \TT_{j}} &\mathcal{M}^{\sharp}_{T_{km}, c_{N}}(f \chi_{\TT_{j-1}^*})(\zeta) &\lesssim & c_{km} \ev{f}_{\TT^{*}_{j-1}},\\[1em] 
\inf\limits_{\zeta \in \TT_{j}}&|T_{km}(f \chi_{\TT_{j-1}^*})(\zeta)|&\lesssim & c_{km} \ev{f}_{\TT^{*}_{j-1}},\\[1em]
\inf\limits_{\zeta \in \TT_{j}}&|T_{km}(f \chi_{\TT_{j}^*})(\zeta)|&\lesssim & c_{km} \ev{f}_{\TT^{*}_{j}},\\[1em]
\inf\limits_{\zeta \in \TT_{j}}&|T_{km}((b-b_{\TT_{j-1}^*})f \chi_{\TT_{j-1}^*})(\zeta)|&\lesssim & c_{km} \ev{(b-b_{\TT_{j-1}^*})f}_{\TT_{j-1}^*},\\[1em]
\inf\limits_{\zeta \in \TT_{j}}& \mathcal{M}^{\sharp}_{T_{km}, c_{N}}((b-b_{\TT_{j-1}^*})f \chi_{\TT_{j-1}^*})(\zeta) &\lesssim & c_{km} \ev{(b-b_{\TT_{j-1}^*})f}_{\TT_{j-1}^*},\\[1em]
\inf\limits_{\zeta \in \TT_{j}}&|T_{km}((b-b_{\TT_{j-1}^*})f \chi_{\TT_{j-1}^*})(\zeta)|&\lesssim & c_{km} \ev{(b-b_{\TT_{j}^*})f}_{\TT_{j}^*}
\end{array}
$

Combining the above estimates with \cref{pointwise-recursive}, \cref{estIj} and \cref{estIIj}, we obtain
\begin{align*}
\big|[b, T_{km}](f \chi_{\PP^*})(z)\big|\lesssim_{N}\,  c_{km}\,& \left[\sum\limits_{\TT\in \GG}|b-b_{\TT^*}|\left(\fint_{\TT^{*}}|f|\right) \chi_{\TT}(z)+\sum\limits_{\TT\in \GG}\left(\fint_{\TT^{*}}|(b-b_{\TT^{*}})f|\right) \chi_{\TT}(z)\right]\\
&\qquad+ c_{km} \|b\|_{\text{BMO}} \sum\limits_{\TT\in \GG}\left(\frac{1}{|\TT^*|}\int_{\TT^{*}}|f|\right) \chi_{\TT}(z)    
\end{align*}
for a.e. $z\in \PP$. This completes the proof.
\end{proof}

\begin{theorem}
\label{Pointwise-estimate-sparse-local}
Let $\PP$ be any ultraparabolic cube. Then for any compactly supported bounded function $f$ there exists a $\frac{1}{2}$-sparse family $\GG\subset \mathscr{D}(\PP)$ such that
\begin{align}
\label{pointwise-sparse-local}
\big|T_{km}(f \chi_{\PP^*})(z)\big|\lesssim_{N}\,  c_{km}\, \sum\limits_{\TT\in \GG}\left(\frac{1}{|\TT|^*}\int_{\TT^*}|f|\right) \chi_{\TT}(z),    
\end{align}
holds for a.e. $z\in \PP$.
\end{theorem}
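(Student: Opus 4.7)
The plan is to reuse the sparse-construction machinery developed for the harder commutator case in \cref{Pointwise estimates-comm}, stripping away the BMO oscillations because only the bare singular operator $T_{km}$ appears here. The two endpoint inputs --- weak-type $(1,1)$ for $T_{km}$ from \cref{weak-type} and weak-type $(1,1)$ for the grand truncated maximal operator $\mathcal{M}^{\#}_{T_{km},s}$ from \cref{weak-type-sharp} --- already sit in place, so the iterative Calder\'on--Zygmund stopping time goes through essentially unchanged.

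Concretely, I would first introduce the envelope operator
\[
\mathcal{N}_{\PP}f(z):=\max\bigl\{\,|T_{km}(f\chi_{\PP^*})(z)|,\ \mathcal{M}^{\#}_{T_{km},c_N}(f\chi_{\PP^*})(z)\,\bigr\},
\]
which is weak-type $(1,1)$ by the two endpoint theorems, and single out the bad set
\[
F_{\PP,\,bad}:=\bigl\{z\in\PP:\ \max\{\mathcal{M}f(z),\,\mathcal{N}_{\PP}(f\chi_{\PP^*})(z)\}>\alpha_N\,c_{km}\,\langle f\rangle_{\PP^*}\bigr\}.
\]
Using the two weak-type constants together with the doubling bound $|\PP^*|\lesssim|\PP|$, I can pick $\alpha_N$ large enough so that $|F_{\PP,\,bad}|\leq \tfrac{1}{2}|\PP|$. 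A local dyadic Calder\'on--Zygmund stopping-time argument applied to $\chi_{F_{\PP,\,bad}}$ at a small level $\lambda$ then produces a pairwise disjoint family $\GG_{\PP}\subset\UU(\PP)$ covering $F_{\PP,\,bad}$ with $\sum_{\TT\in\GG_{\PP}}|\TT|\leq\tfrac{1}{2}|\PP|$. Iterating this construction, setting $\GG_0=\{\PP\}$, $\GG_{k}=\bigcup_{\TT\in\GG_{k-1}}\GG_{\TT}$, and invoking \cref{sparse-remark} yields a $\tfrac{1}{2}$-sparse collection $\GG=\bigcup_{k\geq 0}\GG_k$.

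For a.e.\ $z\in\PP$ the iterates $|\GG_k|\to 0$, so there is a largest index $k(z)$ and a maximal chain $\PP=\TT_0\supset\TT_1\supset\cdots\supset \TT_{k(z)}\ni z$. I telescope
\[
|T_{km}(f\chi_{\PP^*})(z)|\leq \sum_{j=1}^{k(z)}|T_{km}(f\chi_{\TT_{j-1}^*\setminus\TT_j^*})(z)|+|T_{km}(f\chi_{\TT_{k(z)}^*})(z)|,
\]
and for each $j$ pick an auxiliary point $z'\in\TT_j\setminus(F_{\TT_j,\,bad}\cup F_{\TT_{j-1},\,bad})$; this set has positive measure because $|F_{\TT_j,\,bad}|\leq\tfrac12|\TT_j|$ and $|\TT_j\cap F_{\TT_{j-1},\,bad}|\leq\tfrac14|\TT_j|$ (the same bookkeeping as in \cref{Pointwise estimates-comm}). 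Writing the $j$-th summand as $|T_{km}(f\chi_{\TT_{j-1}^*\setminus\TT_j^*})(z)-T_{km}(f\chi_{\TT_{j-1}^*\setminus\TT_j^*})(z')|+|T_{km}(f\chi_{\TT_{j-1}^*\setminus\TT_j^*})(z')|$, the difference is controlled by $\mathcal{M}^{\#}_{T_{km},c_N}(f\chi_{\TT_{j-1}^*})(z')\lesssim c_{km}\langle f\rangle_{\TT_{j-1}^*}$ since $z'\notin F_{\TT_{j-1},\,bad}$, and the value at $z'$ splits by the triangle inequality into $|T_{km}(f\chi_{\TT_{j-1}^*})(z')|\lesssim c_{km}\langle f\rangle_{\TT_{j-1}^*}$ and $|T_{km}(f\chi_{\TT_j^*})(z')|\lesssim c_{km}\langle f\rangle_{\TT_j^*}$. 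The leaf term is handled directly using $z\notin F_{\TT_{k(z)},\,bad}$, giving $|T_{km}(f\chi_{\TT_{k(z)}^*})(z)|\lesssim c_{km}\langle f\rangle_{\TT_{k(z)}^*}$. Assembling these bounds over $j$ reorganizes precisely into the sparse sum claimed in \eqref{pointwise-sparse-local}.

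The main --- and essentially only --- technical issue is the coordinated choice of the dilation constant $c_N$ appearing in $\PP^*=\delta_{c_N}\PP$ and of $\alpha_N$: the former must be large enough that the ``outside $s\BB$'' geometry in $\mathcal{M}^{\#}_{T_{km},s}$ genuinely captures the truncation differences arising when we switch from support on $\TT_{j-1}^*\setminus\TT_j^*$ to the auxiliary point $z'\in\TT_j$, and the latter must be large enough that the bad set takes up at most half the measure of the ambient cube. Once these two constants are locked in, the argument is a direct specialization of the commutator proof with the $b-b_{\TT^*}$ factors erased, and no new difficulties arise.
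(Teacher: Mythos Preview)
Your proposal is correct and follows essentially the same route as the paper's own proof: the paper explicitly says the argument is the commutator proof of \cref{Pointwise estimates-comm} with the oscillation terms removed, and carries out the identical five steps you outline --- the envelope operator $\mathcal{N}$, the bad set $F_{\PP,\,bad}$, the local Calder\'on--Zygmund stopping on $\chi_{F_{\PP,\,bad}}$, the recursive sparse family, and the telescoping estimate via an auxiliary point $z'\in\TT_j\setminus(F_{\TT_j,\,bad}\cup F_{\TT_{j-1},\,bad})$. Your identification of the only technical care point (the coordinated choice of $c_N$ and $\alpha_N$) matches the paper as well.
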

\begin{proof}
The proof is similar to that of \cref{Pointwise estimates-comm} and is inspired by the work \cite{Lorist-pointwaise-sparse2021}. We point out the main steps for sake of completion. 

\paragraph{\it\underline{Step 1}:} In a first step, we define an operator 
 \begin{align*}
\mathcal{N}f(z):=\max\{|T_{km}(f \chi_{\PP^*})(z)|, \mathcal{M}^{\sharp}_{T_{km}, s}(f \chi_{\PP^*})(z)\},
 \end{align*} which is weak-type $(1, 1)$ by \cref{weak-type} and \cref{weak-type-sharp}.
 
\paragraph{\it\underline{Step 2}:} In a second step, we consider the set 
\begin{align*}
F_{\PP,\,bad}:=\{z\in \PP: \max\{\mathcal{M}f(z), \mathcal{N}(f\chi_{\PP^*})(z)\}> c_{N} c_{km}  \langle f \rangle_{\PP^*} \},   
\end{align*} which satisfies 
\begin{align}
|F_{\PP,\,bad}|\lesssim O\left(\frac{1}{\kappa}\right)|\PP|\leq \frac{1}{2}|\PP|,    
\end{align}
for sufficiently large $c_N$. 

\paragraph{\it\underline{Step 3}:} Next, we apply \CZ~decomposition to the function $\chi_{F_{\PP,\,bad}}$ at level $\lambda>0$ which produces a  pairwise disjoint collection of ultraparabolic cubes $\GG_{\PP}\subset \UU(\PP)$ such that
\begin{align}
F_{\PP,\,bad}\subset \bigcup\limits_{\TT\in \GG_{\PP}}\TT\mbox{ and }\lambda<\frac{|\TT \cap F_{\PP, bad}|}{|\TT|}\leq \frac{1}{2},    
\end{align}
for all $\TT\in \GG_{\PP}$ for sufficiently small $\lambda$. We also get 
\begin{align}\sum\limits_{\TT\in \GG_{\PP}} |\TT|\leq \frac{1}{\lambda} \sum\limits_{\TT\in \GG_{\PP}} |\TT \cap F_{\PP,\,bad}|\lesssim \frac{1}{\lambda} |F_{\PP,\,bad}| \lesssim O\left(\frac{1}{c_N c_{km} \lambda }\right)|\PP| \leq \frac{1}{2} |\PP|,
\label{sparseness}
\end{align}
provided we choose $c_N$ very large.

\paragraph{\it\underline{Step 4}:} Now, the sparse collection is defined in a recursive manner. Set $\GG_{0}=\{\PP\}$ and $\GG_{1}=\GG_{\PP}$. For each $\TT\in \GG_{\PP}$, we repeat the steps 1, 2, and 3 to obtain the collection $\GG_{\TT}$ and their union $\GG_{2}=\bigcup_{\TT\in \GG_{\PP}} \GG_{\TT}$ gives us the second stage of the recursion. Iterating over all sub-cubes indefinitely we receive the collection $\GG_{k}$ for all $k=0, 1, 2, \ldots$ Once again, \cref{sparse-remark} and \cref{sparseness} imply that the collection $\GG=\bigcup\limits_{k\geq 0} \GG_{k}$ is a sparse collection.

\paragraph{\it\underline{Step 5}:} It remains to prove that $\GG$ satisfies the property \eqref{pointwise-sparse-local}. The recursive process implies that $\sum_{\TT\in \GG_{k}}|\TT|\leq \frac{1}{2^k}|\PP|$, consequently, $|\GG_k|\to 0$ as $k\to \infty$. Therefore, up to a measure zero set, for each point $z\in \PP$ there exists a largest number $k(z)$ such that $z\in \GG_{j}$ for each $0\leq j\leq k(z)$. Hence, there exist a chain of ultraparabolic dyadic cubes $\{\TT_{j}\}_{j=0}^{k(z)}$ with $\TT_{j}\in \GG_j$ such that $z\in \TT_{k(z)}\subset \TT_{k(z)-1}\subset\cdots\subset \TT_{0}=\PP$. Observe
\begin{align}
\nonumber\ek{T_{km}(f \chi_{\PP^*})(z)}&\leq \sum_{j=1}^{k(z)}\ek{T_{km}(f \chi_{\TT_{j-1}^*\setminus \TT_{j}^*})(z)}+\ek{T_{km}(f \chi_{\TT^{*}_{k(z)}})(z)}\\
&\lesssim \sum_{j=1}^{k(z)}\underbrace{\ek{T_{km}(f \chi_{\TT_{j-1}^*\setminus \TT_{j}^*})(z)}}_{I_j}+\ev{f}_{\TT^{*}_{k(z)}},
\label{pointwise-recursive2}
\end{align}
where in the last inequality we have used the fact that $z\notin \GG_{k(z)+1}$. 

Now, we shall estimate $I_j$ in the sum in \cref{pointwise-recursive2}. Note that for each $z'\in  \TT_{j}$, we can write 

Therefore,
\begin{align}\label{estIj2}
|T_{km}(f \chi_{\TT_{j-1}^*\setminus \TT_{j}^*})(z)|&\leq |T_{km}(f \chi_{\TT_{j-1}^*\setminus \TT_{j}^*})(z)-T_{km}(f \chi_{\TT_{j-1}^*\setminus \TT_{j}^*})(z')|\\
&\nonumber\qquad+|T_{km}(f \chi_{\TT_{j-1}^*\setminus \TT_{j}^*})(z')|\\
&\nonumber\leq \inf_{\zeta \in \TT_{j}} \mathcal{M}^{\sharp}_{T_{km}, c_{N}}(f \chi_{\TT_{j-1}^*})(\zeta)\\
&\nonumber\qquad+ \inf_{\zeta \in \TT_{j}}|T_{km}(f \chi_{\TT_{j-1}^*})(\zeta)|\\
&\nonumber\qquad\qquad+ \inf_{\zeta \in \TT_{j}}|T_{km}(f \chi_{\TT_{j}^*})(\zeta)|.
\end{align}

Now to complete the proof we observe that $|\TT_{j}\setminus (F_{\TT_{j},\,bad}\cup F_{\TT_{j-1},\,bad})|>0$, indeed, $$|\TT_{j}\cap(F_{\TT_{j},\,bad}\cup F_{\TT_{j-1},\, bad})|\leq |F_{\TT_{j},\,bad}|+ |\TT_{j}\cap F_{\TT_{j-1},\,bad}|\leq \frac{1}{2}|\TT_j|+\frac{1}{4}|\TT_j|.$$ 

Therefore

$
\begin{array}{lllc}
\inf\limits_{\zeta \in \TT_{j}}& \mathcal{M}^{\sharp}_{T_{km}, c_{N}}(f \chi_{\TT_{j-1}^*})(\zeta)&\lesssim & \kappa \ev{f}_{\TT^{*}_{j-1}},\\
\inf\limits_{\zeta \in \TT_{j}}&|T_{km}(f \chi_{\TT_{j-1}^*})(\zeta)|&\lesssim & \kappa \ev{f}_{\TT^{*}_{j-1}}\\
\inf\limits_{\zeta \in \TT_{j}}&|T_{km}(f \chi_{\TT_{j}^*})(\zeta)|&\lesssim & \kappa \ev{f}_{\TT^{*}_{j}}.
\end{array}$

Combining the above estimates in \cref{pointwise-recursive2} and \cref{estIj2}, we obtain
$$\big|T_{km}(f \chi_{\PP^*})(z)\big|\lesssim_{N}\,  c_{km}\, \sum\limits_{\TT\in \GG}\left(\frac{1}{|\TT^*|}\int_{\TT^{*}}|f|\right) \chi_{\TT}(z),$$
for a.e. $z\in \PP$. 
\end{proof}

\begin{proof}[Proof of \cref{represent_pointwise_bound}] Recall the representation formula
\begin{align*}
    u_{x_ix_j}=- \sum_{m=1}^\infty\sum_{k=1}^{g_m}c_{ij}^{km}T_{km}(\mathcal{L}u) + \sum_{h,k=1}^{s_0}  \sum_{m=1}^\infty\sum_{k=1}^{g_m} c_{ij}^{km}\lbrack a_{hk}, T_{km}\rbrack(u_{x_hx_k})+\alpha_{ij}\cdot\mathcal{L}u.
\end{align*}
Now the proof is complete a consequence of the applications of \cref{Pointwise-estimate-sparse-local} for $f=\mathcal{L}u$ and \cref{Pointwise estimates-comm} to $f=u_{x_hx_k}$. 
\end{proof}

\section{Applications of sparse domination}
In this section we highlight the consequences of our sparse domination results. 
\subsection{Weighted estimates on Lebesgue spaces}

This section is devoted to prove appropriate weighted norm inequalities for the sparse operators $\mathscr{A}_{\GG}$ and $\mathscr{A}_{\GG, b}$ where $b$ is in $(\delta, \kappa)-BMO$ class. Let us start with the definition of Muckenhoupt $A_p$ weights in our context.
\begin{definition}
\label{Muckenhoupt-ultraparabolic}
For $1<p <\infty$, we say a weight $\omega$ on $\mathbb{R}^{N+1}$ belongs to the class $\mathcal{A}_{p}(\mathbb{R}^{N+1}, d, |\cdot|)$, if 
\begin{eqnarray}
[\omega]_{\mathcal{A}_p}:= \sup_{\PP} \left( \frac{1}{|\PP|} \int_{\PP}\omega~ dx\right) \left( \frac{1}{|\PP|} \int_{\PP} \omega^{1-p^{\prime}} dx \right)^{p-1} <\infty, 
\end{eqnarray} 
where the supremum is taken over all ultraparabolic cubes $\PP$ in  the homogeneous space $(\mathbb{R}^{N+1}, d, |\cdot|)$. $\sigma$ will denote the dual weight to $\omega$, i.e., $\sigma=\omega^{-\frac{1}{p-1}}$. It is well known that $w\in \mathcal{A}_p(\mathbb{R}^{N+1}, d, |\cdot|) \iff \sigma\in \mathcal{A}_{p'}(\mathbb{R}^{N+1}, d, |\cdot|)$. Here on wards, we simply denote the class $\mathcal{A}_p(\mathbb{R}^{N+1}, d, |\cdot|)$ as $\mathcal{A}_p(\mathbb{R}^{N+1})$.
\end{definition}
We shall prove the following estimates in order to conclude weighted gradient estimates for the solution of \eqref{maineq}.
\begin{lemma}
\label{main-sparse-estimates}
Let $\GG$ be an $\eta$-sparse family. 
\begin{enumerate}
    \item 
For $1<p<\infty$, $\omega\in \mathcal{A}_p(\mathbb{R}^{N+1})$ and $f\in L^p(\omega)$ we have
\begin{equation}
\label{estmates-for-sparse-1}\|\mathscr{A}_{\GG}f\|_{L^p(\omega)}\leq C_{N, p, \eta} [\omega]_{\mathcal A_{p}}^{\max\left\{\frac{1}{p-1}, 1\right\}}\|f\|_{L^p(\omega)}.
\end{equation}
\item
Let $1<p<\infty$, $\omega\in \mathcal{A}_p(\mathbb{R}^{N+1})$, and $b\in L^{\infty}$ with small $(\delta, \mathfrak{K})$ norm where $\mathfrak{K}> r(\PP)$ for all $\PP\in \GG$. Then
\begin{equation}
\label{estmates-for-sparse-2}\|\mathscr{A}_{\GG}^{b}f\|_{L^p(\omega)}+\|\mathscr{A}_{\GG}^{b, *}f\|_{L^p(\omega)}\leq C_{N, p, \eta} [\omega]_{\mathcal A_{p}}^{2\max\left\{\frac{1}{p-1}, 1\right\}} O(\delta)\|f\|_{L^p(\omega)},
\end{equation}
holds for all $f\in L^p(\omega)$.
\end{enumerate}
\end{lemma}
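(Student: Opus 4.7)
The plan is to prove (1) and (2) using the standard machinery for sparse operators on spaces of homogeneous type: duality and Carleson embedding for (1), and a telescoping decomposition reducing to (1) for (2).

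For (1), I would dualise against $g\in L^{p'}(\sigma)$ with unit norm, where $\sigma = \omega^{1-p'}$. Expanding
\[
\int_{\RR^{N+1}}\mathscr{A}_{\GG}f\cdot g\omega\,dz \;=\; \sum_{P\in\GG}|P|\,\langle|f|\rangle_P\langle g\omega\rangle_P
\]
and using sparseness $|P|\leq \eta^{-1}|F_P|$ with $\{F_P\}$ pairwise disjoint, the sum collapses to an integral over disjoint sets controlled by a Carleson embedding argument on the dyadic grid $\UU$. The $A_p$ condition in the form $\omega(P)^{1/p}\sigma(P)^{1/p'}\leq [\omega]_{A_p}^{1/p}|P|$ is inserted after separating weighted and unweighted averages via $|f| = (|f|\sigma^{-1/p})\sigma^{1/p}$. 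The asymmetry between the $\omega$ and $\sigma$ sides is what produces the sharp exponent: applying the argument directly when $p \geq 2$ yields $[\omega]_{A_p}^{1}$, while using self-adjointness of $\mathscr{A}_{\GG}$ and applying it on $L^{p'}(\sigma)$ when $p < 2$ yields $[\sigma]_{A_{p'}}^{1} = [\omega]_{A_p}^{1/(p-1)}$. Taking the maximum gives the exponent $\max\{1,1/(p-1)\}$ in \eqref{estmates-for-sparse-1}.

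For (2), I would use the standard telescoping argument for sparse commutators. For a.e.\ $z\in \RR^{N+1}$, list the cubes of $\GG$ containing $z$ in decreasing size as $P_0 \supsetneq P_1 \supsetneq \cdots \supsetneq P_{k(z)}$; for any $P = P_j$ in this chain,
\[
b(z) - b_P \;=\; \bigl(b(z) - b_{P_{k(z)}}\bigr) + \sum_{i=j+1}^{k(z)}\bigl(b_{P_i} - b_{P_{i-1}}\bigr).
\]
The $(\delta,\mathfrak{K})$-BMO hypothesis, together with the assumption $r(P) < \mathfrak{K}$ for every $P \in \GG$, ensures each telescoping increment is $O(\delta)$. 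Substituting into the defining sum of $\mathscr{A}_{\GG}^{b}f(z)$ and exchanging the order of summation bounds $\mathscr{A}_{\GG}^{b}f$ pointwise by $O(\delta)$ times an iterated sparse average of the form $\mathscr{A}_{\GG}(\mathscr{A}_{\GG}f)$, plus a lower-order boundary contribution from the last term in the telescoping. Two applications of (1) then yield the bound with exponent $2\max\{1,1/(p-1)\}$ and prefactor $O(\delta)$. The adjoint operator $\mathscr{A}_{\GG}^{b,*}$ is treated by a symmetric argument in which the roles of the outside factor $|b-b_P|$ and the inside weight are interchanged; the same telescoping identity reduces the estimate to two applications of (1).

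The main obstacle I expect is the careful bookkeeping in Part (2): one must verify that the telescoping sum genuinely produces a clean composition of two sparse operators (rather than a $\log$-growth factor that would spoil the sharp exponent), and that each increment $|b_{P_i}-b_{P_{i-1}}|$ really is $O(\delta)$ under the $(\delta,\mathfrak{K})$-BMO condition in the homogeneous-space dyadic setting via the Christ-Hyt\"onen-Kairema grids from Section~4.1. A secondary technical point is confirming that the Carleson embedding constants in (1) depend only on $N$, $p$, $\eta$, and the doubling constant of $(\RR^{N+1},d,|\cdot|)$, with no hidden dependence on $[\omega]_{A_\infty}$ or on $\|b\|_{\text{BMO}}$ that would obstruct the sharp exponent.
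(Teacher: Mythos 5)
Your handling of Part~(1) is fine: the paper itself offloads this to \cite{Lorist-pointwaise-sparse2021}, and the duality/Carleson-embedding route with the $p\leftrightarrow p'$ symmetry is exactly how the sharp exponent $\max\{1,1/(p-1)\}$ is obtained in that reference. However, your approach to Part~(2) has a genuine gap, and it is not the log-factor bookkeeping you flag as the main risk.

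The problem is the two terms you treat as controlled in the telescoping $b(z)-b_{P_j} = \bigl(b(z)-b_{P_{k(z)}}\bigr)+\sum_{i=j+1}^{k(z)}\bigl(b_{P_i}-b_{P_{i-1}}\bigr)$. First, consecutive cubes $P_i\subsetneq P_{i-1}$ in a sparse chain need not have comparable measure: the increment obeys only $|b_{P_i}-b_{P_{i-1}}|\leq\tfrac{|P_{i-1}|}{|P_i|}\,\Omega_{P_{i-1}}(b)$, so the $(\delta,\mathfrak{K})$-BMO smallness of $\Omega_{P_{i-1}}(b)$ does \emph{not} make the increment $O(\delta)$ when the size ratio is large; even splitting via intermediate dyadic cubes only gives a bound of order $\delta\log(|P_{i-1}|/|P_i|)$, which is precisely the logarithmic degradation you hoped to avoid. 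Second, the ``boundary term'' $|b(z)-b_{P_{k(z)}}|$ is not lower-order: pointwise it is only controlled by $2\|b\|_{L^\infty}$, not by $O(\delta)$, and when multiplied by $\mathscr{A}_\GG f(z)$ it is a main contribution that cannot be absorbed into a single application of Part~(1).

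The paper avoids both issues by \emph{not} telescoping along the chain $\GG$ at all. For each $\PP\in\GG$ it constructs (Step 1 of the proof, the claim around \eqref{Claim-1}) an auxiliary $\tfrac12$-sparse family $\mathcal{Z}(\PP)\subset\UU(\PP)$, obtained by applying a local Calder\'on--Zygmund stopping time to the level set of $\mathcal{M}^{\PP}(b-(b)_\PP)$, yielding the pointwise estimate
\begin{align*}
|b(z)-(b)_\PP|\leq\alpha_N\sum_{\TT\in\mathcal{Z}(\PP)}\Omega_\TT(b)\,\chi_\TT(z)\quad\text{a.e.\ }z\in\PP.
\end{align*}
Here each term is controlled by $\Omega_\TT(b)\leq O(\delta)$ directly, because the stopping-time cubes $\TT$ are chosen so that $\TT$ meets the set where the local maximal function of $b-(b)_\PP$ is small, which gives $|(b)_\TT-(b)_\PP|\leq\alpha_N\Omega_\PP(b)$ without any size-ratio penalty; and the recursion exhausts a.e.\ $z$ since $\sum_{\TT\in\mathcal{Z}_k(\PP)}|\TT|\leq 2^{-k}|\PP|\to 0$, so there is no residual boundary term to handle. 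After an augmentation step (merging $\GG$ with all the $\mathcal{Z}(\PP)$ to produce a single $c_\eta$-sparse family $\tilde\GG$, via the Lerner--Ombrosi--Rivera-R\'ios lemma), the resulting pointwise bound $\mathscr{A}_{\GG}^{b,*}f\lesssim O(\delta)\,\mathscr{A}_\GG(\mathscr{A}_{\tilde\GG}|f|)$ follows, and Part~(1) is applied twice; $\mathscr{A}_{\GG}^{b}$ is then handled by duality. If you wish to salvage a telescoping-flavored argument, the essential missing ingredient is exactly this secondary stopping-time construction on the oscillation of $b$, not just bookkeeping.
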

\begin{proof}
The proof of \eqref{estmates-for-sparse-1} is well known and is available in \cite{Lorist-pointwaise-sparse2021}. We only prove \eqref{estmates-for-sparse-2} by showing that $\mathscr{A}_{\GG}^{b, *}$ is controlled pointwise by composition of two sparse operators. The argument is similar to the proof of Theorem~\ref{Pointwise-estimate-sparse-local}. Let us divide the proof into several parts. 
\paragraph{\it\underline{Step 1} (Pointwise controlling the oscillation):} We claim that for any $\PP\in \GG$ there exists a family of pairwise disjoint cubes $\GG_{\PP}\subset \UU(\PP)$ such that
\begin{align}
\label{Claim-1}
|b(z)-(b)_{\PP}|\leq \alpha_{N}\,  \Omega_{\PP}(b)+\sum_{\TT\in \GG_{\PP}}|b(z)-(b)_{\TT}| \chi_{\TT},\, \, \text{a.e.}\,\, x\in \PP,
\end{align}
and $\sum_{\TT\in \GG_{\PP}}|\TT|\leq \frac{1}{2}|\PP|$. Let us consider the set $E:=\{x\in \PP: \displaystyle\mathcal{M}^{\PP}(b-(b)_{\PP})(z)> \alpha_{N}\,  \Omega_{\PP}(b)\}$. Choose $\alpha_{N}$ large enough to ensure that $|E|\leq \frac{1}{2}|\PP|$. Local \CZ~decomposition to the function $\chi_{E}$ at level $\lambda>0$ produces a  pairwise disjoint collection of ultraparabolic cubes $\GG_{\PP}\subset \UU(\PP)$ such that
\begin{align}
\label{LCZ}
E\subset \bigcup\limits_{\TT\in \GG_{\PP}}\TT\mbox{ and }\lambda<\frac{|\TT \cap E|}{|\TT|}\leq \frac{1}{2},    
\end{align}
for all $\TT\in \GG_{\PP}$ for sufficiently small $\lambda$. Therefore, $|b(z)-(b)_{\PP}|\leq \alpha_{N} \Omega_{\PP}(b)$ a.e. $z\notin \bigcup\limits_{\TT\in \GG_{\PP}}\TT$. Also, \eqref{LCZ} implies that $\TT\cap E^{c}\neq \emptyset$, hence for some $\xi\in \TT\cap E^{c}$ we have  \begin{align*}
    |(b)_{\TT}-(b)_{\PP}|\leq \frac{1}{|\TT|}\int_{\TT}|b-(b)_{\PP}|\leq \mathcal{M}^{\PP}(b-(b)_{\PP})(\xi)\leq \alpha_{N} \Omega_{\PP}(b).
\end{align*}
Combining the above estimates we obtain 
\begin{align*}
|b(z)-(b)_{\PP}|&\leq |b(z)-(b)_{\PP}|\chi_{\PP\setminus \bigcup\limits_{\TT\in \GG_{\PP}}\TT}+\sum_{\TT\in \GG_{\PP}}|b(z)-(b)_{\TT}| \chi_{\TT}\\
&+\sum_{\TT\in \GG_{\PP}}|(b)_{\PP}-(b)_{\TT}| \chi_{\TT}\\
&\leq \alpha_{N}\,  \Omega_{\PP}(b)+\sum_{\TT\in \GG_{\PP}}|b(z)-(b)_{\TT}| \chi_{\TT}.
\end{align*}
Also, arguing as in Theorem~\ref{Pointwise-estimate-sparse-local}, we obtain $\sum_{\GG_{\PP}}|\TT|\leq \frac{1}{2}|\PP|$. This completes the claim. Recursive application of \eqref{Claim-1} produces a $\frac{1}{2}-$sparse family $\mathcal{Z}(\PP)\subset \UU(\PP)$ such that \begin{align}
\label{Recursive-oscillation}
|b(z)-(b)_{\PP}|\leq \alpha_{N}\,  \sum_{\TT\in \mathcal{Z}(\PP)}\Omega_{\TT}(b)\chi_{\TT}\, \, \text{a.e.}\,\, z\in \PP.
\end{align}

\paragraph{\it\underline{Step 2} (Augmentation process):} Recursive application to each $\PP\in \GG$ produces $\frac{1}{2}-$sparse families $\mathcal{Z}(\PP)\subset \UU(\PP)$ such that \eqref{Recursive-oscillation} holds. Define the new collection $\tilde{\GG}=\GG\cup (\bigcup_{\PP\in \GG}\mathcal{Z}(\PP))$. Since $\GG$ is a $\eta$-sparse family and each $\mathcal{Z}(\PP)$ is $\frac{1}{2}-$sparse, using Lemma~2.4 from \cite{Lerner-comm}, we conclude that $\tilde{\GG}$ is $c_{\eta}:=\frac{\eta}{2(1+\eta)}$ sparse. Also, for any arbitrary cube $\PP\in \tilde{\GG}$, denote $S(\PP):=\{\TT\in \tilde{\GG}: \TT\subseteq  \PP\}$. Let us decompose $S(\PP)$ into maximal subfamilies and using the fact that $\tilde{\GG}$ is $c_{\eta}$ sparse, we conclude that
\begin{align}
\label{main-oscillation-estimate}
\nonumber|b(z)-(b)_{\PP}|&\leq \alpha_{N} \,  \sum_{\TT\in S(\PP)}\Omega_{\TT}(b)\chi_{\TT}(z)\\
&\leq \alpha_{N}\, O(\delta) \sum_{\TT\in S(\PP)}\chi_{\TT}(z), \,\,\text{a.e.}\,\, z\in \PP,
\end{align}
where in the last inequality we have used the fact $\Omega_{\TT}(b)\leq O(\delta)$ since $b$ has small $(\delta, \mathfrak{K})-$ BMO norm. 

\paragraph{\it\underline{Step 3} (Estimate of $\|\mathscr{A}_{\GG}^{b, *}f\|_{L^p(\omega)}$):} 
The sparseness of $\tilde{\GG}$ implies there exists ${F}_{\TT}\subset \TT$ such that $|{F}_{\TT}|\geq c_{\eta} |\TT|$ and $\{{F}_{\TT}\}$ is a disjoint family. Now invoking \eqref{main-oscillation-estimate}, for each ultraparabolic cube $\PP\in \tilde{\GG}$, we obtain
\begin{align}
\nonumber\int_{\PP} |b(z)-(b)_{\PP}||f(z)|&\leq \alpha_{N} O(\delta) \sum_{\TT\in S(\PP)} \left(\frac{1}{|\TT|}\int_{\TT} |f|\right) |\TT| \\
&\leq c_{\eta}\, \alpha_{N}\, O(\delta) \sum_{\TT\in S(\PP)} \left(\frac{1}{|\TT|}\int_{\TT} |f|\right) |F_{\TT}|\leq c_{\eta, N}\, O(\delta) \int_{\PP} \mathscr{A}_{S(\PP)}(|f|)(z)\, dz.
\end{align}
Therefore, 
\begin{align}
\label{control-on-sparse-comm}
\mathscr{A}_{\GG}^{b, *}f(z)\leq c_{\eta, N}\, O(\delta) \mathscr{A}_{\GG}(\mathscr{A}_{\tilde{\GG}}|f|)(z).
\end{align}
From \eqref{estmates-for-sparse-1}, we conclude that $$\|\mathscr{A}_{\GG}^{b, *}f\|_{L^p(\omega)}\leq c_{\eta, N} O(\delta) \|\mathscr{A}_{\GG}(\mathscr{A}_{\tilde{\GG}}|f|)\|_{L^p(\omega)}\leq c_{\eta, N} O(\delta)  [\omega]_{\mathcal A_{p}}^{2\max\left\{\frac{1}{p-1}, 1\right\}} \|f\|_{L^p(\omega)}.$$
Finally $\|\mathscr{A}_{\GG}^{b}\|_{L^p(\omega)\to L^p(\omega)}=\|\mathscr{A}_{\GG}^{b, *}f\|_{L^{p'}(\sigma)\to L^{p'}(\sigma)}\leq c_{\eta, N} O(\delta)  [\omega]_{\mathcal A_{p}}^{2\max\left\{\frac{1}{p-1}, 1\right\}}.$
\end{proof}
Now, in order to prove Theorem~\ref{sharp-weighted-estimates}, we begin with the following local estimate.

\begin{proposition}\label{estonball1}
For every $p\in (1,\infty)$, there exists $C=C(\text{data})$ and $r_0=r_0(\text{data})$ such that if $u\in C_0^\infty(\RR^N\times(0,\infty))$, $\text{spt}\,u\subset B_r\subset \RR^N\times(0,\infty)$ with $0<r<r_0$, then, for $i,j=1,2,\ldots,s_0$
\begin{align*}
    \|u_{x_ix_j}\|_{L^p(\omega)}\leq C_{A, B, N, p, \eta}\, [\omega]_{\mathcal A_{p}}^{\max\left\{\frac{1}{p-1}, 1\right\}}\|\mathcal{L}u\|_{L^p(\omega)}.
\end{align*}
\end{proposition}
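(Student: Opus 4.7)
The plan is to combine the pointwise sparse representation from \cref{represent_pointwise_bound} with the weighted bounds for sparse and sparse-commutator operators proved in \cref{main-sparse-estimates}. Since $u$ is compactly supported in a ball $B_r\subset\RR^N\times(0,\infty)$ with $r<r_0$, I would first fix an ultraparabolic cube $\PP\supset B_r$ of comparable scale. Provided $r_0$ is chosen small enough (comparable to $\mathfrak{K}$), every subcube appearing in the sparse families $\GG_{km,l}\subset\UU(\PP)$ lies at a radius where $A$ satisfies the $(\delta,\mathfrak{K})$-BMO hypothesis, so the hypothesis of \cref{main-sparse-estimates}(2) applied to the entries $a_{hk}$ is legitimate.

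Next, I would take the $L^p(\omega)$ norm of \cref{represent3-sparse} term by term. The leading piece $\sum_{m,k,l}c_{km}|c_{ij}^{km}|\,\mathscr{A}_{\GG_{km,l}}(\mathcal{L}u)$ is handled by \cref{main-sparse-estimates}(1), contributing
\[
C[\omega]_{\mathcal{A}_p}^{\max\{1/(p-1),1\}}\|\mathcal{L}u\|_{L^p(\omega)}\times\sum_{m,k}c_{km}|c_{ij}^{km}|.
\]
This last double sum converges by matching the polynomial growth $c_{km}\lesssim m^{(N+1)/2}$ and the dimension bound $g_m\lesssim m^{N-1}$ against the rapid decay $|c_{ij}^{km}|\lesssim m^{-2l}$ from \cref{riemlebdecay}, taking $l$ large enough. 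The commutator pieces are controlled by \cref{main-sparse-estimates}(2) applied with $b=a_{hk}$, producing a term of the form $C[\omega]_{\mathcal{A}_p}^{2\max\{1/(p-1),1\}}\delta\sum_{h,k}\|u_{x_hx_k}\|_{L^p(\omega)}$ after summing the same convergent double series. The remaining boundary term $|\alpha_{ij}||\mathcal{L}u|$ is absorbed into the leading contribution using that $\alpha_{ij}$ is pointwise bounded, which follows from the smoothness and homogeneity properties of $\Gamma^0$ recorded in \cref{propGreen}.

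Summing the resulting inequality over $i,j\in\{1,\ldots,s_0\}$ produces an estimate of the form
\[
\sum_{i,j=1}^{s_0}\|u_{x_ix_j}\|_{L^p(\omega)}\leq C_1[\omega]_{\mathcal{A}_p}^{\max\{1/(p-1),1\}}\|\mathcal{L}u\|_{L^p(\omega)}+C_2[\omega]_{\mathcal{A}_p}^{2\max\{1/(p-1),1\}}\delta\sum_{h,k=1}^{s_0}\|u_{x_hx_k}\|_{L^p(\omega)},
\]
and the proof is concluded by choosing $\delta$ (equivalently, $r_0$) small enough that $C_2[\omega]_{\mathcal{A}_p}^{2\max\{1/(p-1),1\}}\delta<1/2$, which lets the last sum be absorbed into the left-hand side and yields the claimed bound for each individual $u_{x_ix_j}$.

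The step I expect to be the main obstacle is the absorption argument. The commutator estimate carries a squared $A_p$-characteristic while the target bound displays only the linear one, so the smallness of $\delta$ must be calibrated against $[\omega]_{\mathcal{A}_p}$ itself; this is harmless locally (the constants $C,r_0$ are allowed to depend on the data, including $\omega$), but it is the place where one has to be careful not to lose the sharp exponent. The secondary technical point is the bookkeeping of the spherical harmonic expansion: one must interchange summation with the $L^p(\omega)$ norm and verify that $l$ in \cref{riemlebdecay} can be chosen large enough uniformly in $p$, $\omega$, and the other parameters to make the Minkowski-type argument go through.
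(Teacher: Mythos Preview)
Your proposal is correct and follows essentially the same route as the paper: take $L^p(\omega)$ norms in the sparse representation \cref{represent3-sparse}, apply \cref{main-sparse-estimates} termwise, sum the spherical-harmonic series via \cref{riemlebdecay} (the paper picks $l>(3N-1)/4$), and absorb the commutator contribution by choosing $\delta$ small depending on $[\omega]_{\mathcal{A}_p}$. One minor clarification: the smallness parameter you tune at the end is $\delta$ (the BMO modulus of $A$), not $r_0$; the role of $r_0$ is only to guarantee that the cubes in the sparse families have radius below $\mathfrak{K}$ so that \cref{main-sparse-estimates}(2) applies, exactly as you note at the outset.
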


\begin{proof}
There is an ultraparabolic cube $\PP$ such that $B_r\subset \PP$ whose size is determined in \cref{main-sparse-estimates} and this determines $r_0$. Therefore, we take $L^p(\PP,\omega)$ norms on both sides of the representation formula \cref{represent3-sparse} to get
\begin{align*}
    \|u_{x_ix_j}\|_{L^{p}(\PP,\omega)} &\leq \kappa_{A, N, C} \sum_{m=1}^\infty\sum_{k=1}^{g_m}c_{km} |c_{ij}^{km}| \sum_{l=1}^{C(N)}\,  \|\mathscr{A}_{\GG_{km, l}}(\mathcal{L}u)\|_{L^{p}(\PP,\omega)} \nonumber\\
    &\quad+  \sum_{h,k=1}^{s_0}  \sum_{m=1}^\infty\sum_{k=1}^{g_m}c_{km} |c_{ij}^{km}| \sum_{l=1}^{C(N)} \|\mathscr{A}_{\GG_{km, l} }^{a_{hk}}(u_{x_hx_k})\|_{L^{p}(\PP,\omega)}\nonumber\\
    &\quad\quad+\sum_{h,k=1}^{s_0}  \sum_{m=1}^\infty\sum_{k=1}^{g_m}c_{km} |c_{ij}^{km}| \sum_{l=1}^{C(N)} \, \|\mathscr{A}_{\GG_{km, l} }^{a_{hk}, *}(u_{x_hx_k})\|_{L^{p}(\PP,\omega)}+|\alpha_{ij}|\cdot\|\mathcal{L}u\|_{L^{p}(\PP,\omega)}.
\end{align*}
Applying \cref{main-sparse-estimates} to the above inequality, we obtain
\begin{align}\label{gradest1}
    \|u_{x_ix_j}\|_{L^{p}(\PP,\omega)} &\leq C_{A, B, N, p, \eta}\, [\omega]_{\mathcal A_{p}}^{\max\left\{\frac{1}{p-1}, 1\right\}}\,\|\mathcal{L}u\|_{L^{p}(\PP,\omega)} \sum_{m=1}^\infty\sum_{k=1}^{g_m}c_{km} |c_{ij}^{km}|  \nonumber\\
    &\quad+  C_{A,B, N, p, \eta} [\omega]_{\mathcal A_{p}}^{2\max\left\{\frac{1}{p-1}, 1\right\}} O(\delta) \sup\limits_{h,k}\|u_{x_hx_k}\|_{L^{p}(\PP,\omega)}  \sum_{m=1}^\infty\sum_{k=1}^{g_m}c_{km} |c_{ij}^{km}| \nonumber\\
    &\quad\quad+|\alpha_{ij}|\cdot\|\mathcal{L}u\|_{L^{p}(\PP,\omega)}.
\end{align}

We estimate the sum
\begin{align*}
    \sum_{m=1}^\infty\sum_{k=1}^{g_m}c_{km} |c_{ij}^{km}|&\leq \sum_{m=1}^\infty\sum_{k=1}^{g_m} m^{-2l}m^{\frac{N+1}{2}}\\
    &\leq \sum_{m=1}^\infty m^{-2l}m^{\frac{3N-1}{2}}<\infty
\end{align*} provided $l>\frac{3N-1}{4}$. In the first inequality, we have used \cref{riemlebdecay}, \cref{kernelprop3} and \cref{horpropthm}, whereas for the second inequality we used \cref{dimSphHar}. Therefore \cref{gradest1} if further estimated to
\begin{align}\label{gradest2}
    \|u_{x_ix_j}\|_{L^{p}(\PP,\omega)} &\leq C_{A, B, N, p, \eta}\, \max\{[\omega]_{\mathcal A_{p}}^{\max\left\{\frac{1}{p-1}, 1\right\}},|\alpha_{ij}|\}\,\|\mathcal{L}u\|_{L^{p}(\PP,\omega)} \nonumber\\
    &\quad+  C_{A,B, N, p, \eta} [\omega]_{\mathcal A_{p}}^{2\max\left\{\frac{1}{p-1}, 1\right\}} O(\delta) \sup\limits_{h,k}\|u_{x_hx_k}\|_{L^{p}(\PP,\omega)},
\end{align}
for all $i,j=1,2,\ldots,s_0$. Therefore, if $\delta$ is further chosen such that 
\begin{align*}
    C_{A,B, N, p, \eta} [\omega]_{\mathcal A_{p}}^{2\max\left\{\frac{1}{p-1}, 1\right\}} O(\delta) < 1,
\end{align*}
we get 
\begin{align}\label{gradest3}
    \sup\limits_{i,j}\|u_{x_ix_j}\|_{L^{p}(\PP,\omega)} &\leq C_{A, B, N, p, \eta}\, \max\{[\omega]_{\mathcal A_{p}}^{\max\left\{\frac{1}{p-1}, 1\right\}},|\alpha_{ij}|\}\,\|\mathcal{L}u\|_{L^{p}(\PP,\omega)}\\
    &\nonumber\leq C_{A, B, N, p, \eta}\, [\omega]_{\mathcal A_{p}}^{\max\left\{\frac{1}{p-1}, 1\right\}}\,\|\mathcal{L}u\|_{L^{p}(\PP,\omega)},
\end{align}
since $|\alpha_{ij}|\lesssim 1$.
Now, since $u$ is supported in $B_r$, we obtain the desired estimate.
\end{proof}

Now, the proof of \cref{sharp-weighted-estimates} is immediate.

\begin{proof}[Proof of Theorem~\ref{sharp-weighted-estimates}] In a first step, one obtains the following estimate by a standard argument using cut-off functions and an interpolation lemma, for example, see \cite[Theorem 9.11]{GT2001}.

\begin{align}\label{gradest4}
    \sup\limits_{i,j}\|u_{x_ix_j}\|_{L^{p}(B_r,\omega)} &\leq C_{A, B, N, p, \eta}\, [\omega]_{\mathcal A_{p}}^{\max\left\{\frac{1}{p-1}, 1\right\}}\,\left(\|\mathcal{L}u\|_{L^{p}(B_{2r},\omega)}+\frac{\|u\|_{L^{p}(B_{2r},\omega)}}{r^2}\right).
\end{align}

The estimate in \cref{sharp-weighted-estimates} follows by covering $\Om'$ with balls $B$ of radius $R/2$ where where $R$ has a further restriction $R<\text{dist}(\Om',\Om)$, other than the restriction from \cref{estonball1}

\end{proof}

\subsection{Weighted estimates on Variable Lebesgue spaces}\label{sparsedomVarLeb}


Let us begin by introducing the variable Lebesgue space and their Sobolev counterparts. We denote by $p(\cdot)=p(z):\RR^{N+1}\to \RR$ a measurable function satisfying
\begin{align}\label{boundsonp}
    1<\gamma_1\leq p(z)\leq \gamma_2 < \infty,\,\mbox{ for all }z\in\RR^{N+1}
\end{align} and we call it a {\it{variable exponent}}. The {\it{variable exponent Lebesgue space}} $L^{p(\cdot)}(\Om)$ is defined to be the space of all functions $g:\Om\to\RR$ such that the {\it{modular} }
\begin{align*}
    \rho_{p(\cdot)}(g):=\int_{\Om}|g(z)|^{p(z)}\,dx
\end{align*} is finite. 

The {\it{variable exponent Lebesgue space}} $L^{p(\cdot)}(\Om)$ is known to be a reflexive Banach space when equipped with the Luxembourg norm
\begin{align}
    \|g\|_{L^{p(\cdot)}(\Om)}:=\inf\left\{\lambda>0:\rho_{p(\cdot)}\left(\frac{g}{\lambda}\right)\leq 1\right\},
\end{align} and the hypothesis \cref{boundsonp}. For a proof, see \cite{Diening2017}. With the hypothesis \cref{boundsonp}, the dual space $L^{p'(\cdot)}(\Om)$ is also reflexive where $p'(z)=\frac{p(z)}{p(z)-1}$.

The {\it{variable exponent Sobolev space}} $W^{k,p(\cdot)}(\Om)$ is defined as the space of functions $g\in L^{p(\cdot)}(\Om)$ such that the distributional derivatives $D^{\beta}g$ is also in $L^{p(\cdot)}(\Om)$ for all $|\beta|\leq k$ where $\beta$ is a multi-index. Further, the norm in $W^{k,p(\cdot)}(\Om)$ is defined to be 
\begin{align}
    \|g\|_{W^{k,p(\cdot)}(\Om)}:=\sum\limits_{|\beta|\leq k}\|D^{\beta} g\|_{L^{p(\cdot)}(\Om)}.
\end{align}

The class of variable exponents under consideration is very large to obtain regularity properties of the partial differential equations under consideration. Zhikov \cite{Zhikov86}, while studying homogenization of highly anisotropic nonlinear media, was among the first people to notice that the variable exponent needs to satisfy additional properties in order to avoid the so called {\it Lavrentiev phenomena}, which may be paraphrased as the principle that equations (and their structure/symmetry properties) determine the function spaces in which they should be solved. Zhikov zeroed in on log-H\"older continuity of the variable exponent as an essential requirement in order to obtain regularity results for minimizers of functionals with $p(\cdot)$-growth. Diening\cite{Diening2004} proved that the log-H\"older condition is also required to prove boundedness of the maximal operator on variable exponent Lebesgue spaces. This clearly suggests that the log-H\"older condition is also indispensable when studying gradient $L^p(\cdot)$ estimates.

\begin{definition}
We say that a variable exponent $p(\cdot)$ is locally log-H\"older continuous if there exists a constant $C_0$ such that for every $\eta,\zeta\in\RR^{N+1}$ such that $d(\eta,\zeta)<1/2$, it holds that
\begin{align}\label{localloghold}
    |p(\eta)-p(\zeta)|\leq \frac{C_0}{-\log(d(\eta,\zeta))}.
\end{align}
On the other hand, $p(\cdot)$ is log-H\"older continuous at infinity with respect to some base point $\zeta_0\in \RR^{N+1}$, if there are constants $C_\infty$ and $p_\infty$ such that for every $\zeta\in \RR^{N+1}$,
\begin{align}\label{infiloghold}
    |p(\zeta)-p_\infty|<\frac{C_\infty}{\log(e+d(\zeta,\zeta_0))}.
\end{align}
If the variable exponent $p(\cdot)$ satisfies \cref{localloghold} and \cref{infiloghold}, we say that $p(\cdot)$ is a globally log-H\"older continuous function.
\end{definition}

\begin{remark}
It turns out that the definition of log-H\"older at infinity is indpendent of the choice of base point. Hence, there is no loss of generality to choose an arbitrary $\zeta_0$. In our applications, we will only require the boundedness of the maximal function for $L^{p(\cdot)}(\Om)$ for bounded domains $\Om$. In this case, the log-H\"older condition at infinity is superfluous.
\end{remark}

Let us start with the definition of Muckenhoupt $A_p$ weights in our context.
\begin{definition}
\label{Muckenhoupt-ultraparabolic-variable}
For $1<p <\infty$, we say a weight $\omega$ on $\mathbb{R}^{N+1}$ belongs to the class $\mathcal{A}_{p(\cdot)}(\mathbb{R}^{N+1}, d, |\cdot|)$, if 
\begin{eqnarray}
\label{Variable-Muckenhoupt}
[\omega]_{\mathcal{A}_{p(\cdot)}}:= \sup_{\PP}  |\PP|^{-1} \|\omega \chi_{\PP}\|_{L^{p(\cdot)}}\|\omega^{-1}\chi_{\PP}\|_{L^{p'(\cdot)}}<\infty.
\end{eqnarray} 
where the supremum is taken over all ultraparabolic cubes $\PP$ in  the homogeneous space $(\mathbb{R}^{N+1}, d, |\cdot|)$. Here onwards, we simply denote the class $\mathcal{A}_{p(\cdot)}(\mathbb{R}^{N+1}, d, |\cdot|)$ as $\mathcal{A}_{p(\cdot)}(\mathbb{R}^{N+1})$.
\end{definition}

We shall require the following result on the boundedness of the maximal function on weighted variable exponent Lebesgue spaces for homogeneous spaces which we state for the ultraparabolic Lebesgue space $L^{p(\cdot)}(\mathbb{R}^{N+1}, d, |\cdot|)$.

\begin{proposition}(\cite[Theorem 1.13]{Uribe2022})\label{maxboundvar}
Suppose that the variable exponent $p(\cdot)$ is globally log-H\"older continuous and satisfies \cref{boundsonp} then
\begin{align}
    \|(\mathcal{M} f)\|_{L^{p(\cdot)}(\omega)}\leq C\|f\|_{L^{p(\cdot)}(\omega)}
\end{align} if and only if the weight $w\in \mathcal{A}_{p(\cdot)}(\mathbb{R}^{N+1})$,
\end{proposition}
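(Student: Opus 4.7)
The plan is to prove the two directions of this Muckenhoupt-type characterization separately. The necessity direction is the easier one: given any ultraparabolic cube $\PP \subset \RR^{N+1}$, I would test the boundedness inequality on the function $f = \omega^{-1}\chi_\PP$ (appropriately renormalized). Since $\mathcal{M}f(z) \geq \chi_\PP(z)\,|\PP|^{-1}\int_\PP |f|$, substituting this into $\|\mathcal{M}f\|_{L^{p(\cdot)}(\omega)} \leq C\|f\|_{L^{p(\cdot)}(\omega)}$ and rearranging via the Luxembourg norm identity produces exactly the $\mathcal{A}_{p(\cdot)}$ condition \eqref{Variable-Muckenhoupt}, with constant controlled by $C$.

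For the sufficiency direction, I would follow a three-step strategy using the tools already developed in the paper. First, the Hyt\"onen--Kairema dyadic systems $\UU^1,\ldots,\UU^L$ together with the pointwise comparison \eqref{comparison-dyadic-full} reduce the boundedness of $\mathcal{M}$ on $L^{p(\cdot)}(\omega)$ to that of each dyadic maximal operator $\mathcal{M}_{\UU^t}$. Second, a standard Calder\'on--Zygmund stopping-time argument, in the spirit of Step 3 in the proof of \cref{Pointwise-estimate-sparse-local}, produces, for any $f$, a $\tfrac12$-sparse family $\GG \subset \UU^t$ such that $\mathcal{M}_{\UU^t}f(z) \lesssim \mathscr{A}_\GG f(z)$ pointwise, so it suffices to bound sparse operators $\mathscr{A}_\GG$ on $L^{p(\cdot)}(\omega)$. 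Third, the quantitative estimate $\|\mathscr{A}_\GG f\|_{L^p(\omega)} \lesssim [\omega]_{\mathcal A_p}^{\max\{1,1/(p-1)\}}\|f\|_{L^p(\omega)}$ from \cref{main-sparse-estimates}\,(1), valid for every $p \in (1,\infty)$ and every $\omega \in \mathcal{A}_p(\RR^{N+1})$, is upgraded to the desired estimate on $L^{p(\cdot)}(\omega)$ by Rubio de Francia extrapolation adapted to weighted variable-exponent Lebesgue spaces on spaces of homogeneous type.

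The main obstacle is a potential circularity in the extrapolation step, since the classical Rubio de Francia construction itself presupposes boundedness of $\mathcal{M}$ on $L^{p(\cdot)}(\omega)$ and on a dual-type weighted variable Lebesgue space in order to iterate. The cleanest way to bypass this is to first prove a self-improvement (reverse H\"older) property for the class $\mathcal{A}_{p(\cdot)}$, and then to exploit the global log-H\"older continuity of $p(\cdot)$ to establish, on each ultraparabolic cube $\PP$, the key local equivalence
\begin{align*}
|\PP| \fint_\PP g(z)^{p(z)}\,dz \ \simeq \ |\PP|\Bigl(\fint_\PP g\Bigr)^{p_\PP},
\end{align*}
valid for $g$ with unit-mass-like concentration on $\PP$ and for a suitably chosen mean $p_\PP$ of $p$, modulo sharply controlled tail errors. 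Such a local equivalence transfers the classical constant-exponent $A_{p_\PP}$ bound for $\mathcal{M}_{\UU^t}$ into a modular bound, and hence into a norm bound, in $L^{p(\cdot)}(\omega)$. The technical heart of the argument is the careful book-keeping of these log-H\"older error terms uniformly over all ultraparabolic cubes of the homogeneous space $(\RR^{N+1}, d, |\cdot|)$, so that they aggregate to a finite overall constant; this is where the doubling property of the measure and the quasi-metric properties of $d$ must be invoked in tandem.
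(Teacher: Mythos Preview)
The paper does not give its own proof of this proposition: it is quoted verbatim as \cite[Theorem 1.13]{Uribe2022} and used as a black box in the proof of \cref{sparse-estimates-variable}. So there is no ``paper's proof'' to compare against.

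Regarding the merits of your sketch on its own: the necessity direction is standard and correct. For sufficiency, your reduction to dyadic maximal operators via \eqref{comparison-dyadic-full} and then to sparse operators is fine, but you have correctly spotted that the third step is circular. You propose to lift the constant-exponent bound \eqref{estmates-for-sparse-1} to $L^{p(\cdot)}(\omega)$ by Rubio de Francia extrapolation, yet the Rubio de Francia algorithm on weighted variable Lebesgue spaces requires exactly the boundedness of $\mathcal{M}$ on $L^{p(\cdot)}(\omega)$ and on $L^{p'(\cdot)}(\omega^{-1})$ that you are trying to prove. Note also that you cannot instead appeal to \cref{sparse-estimates-variable}, since that lemma in the paper is proved \emph{using} \cref{maxboundvar}.

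Your proposed escape---a reverse H\"older/self-improvement argument for $\mathcal{A}_{p(\cdot)}$ together with the local constant-exponent equivalence coming from log-H\"older continuity---is indeed the right direction and is essentially the strategy of the cited reference \cite{Uribe2022}. But what you have written is only a heuristic: the ``careful book-keeping of log-H\"older error terms uniformly over all ultraparabolic cubes'' is precisely the nontrivial content of that paper, and your proposal does not supply any of it (in particular, how to handle the interaction of the weight $\omega$ with the variable exponent when passing from modular to norm, and how the decay condition at infinity \eqref{infiloghold} enters). As written, the sufficiency direction is a plausible outline rather than a proof.
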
 where
\begin{align*}
    \|f\|_{L^{p(\cdot)}(\omega)}:=\|f\omega\|_{L^{p(\cdot)}(\RR^{N+1})}
\end{align*}

\begin{lemma}
\label{sparse-estimates-variable}
Let $\GG$ be an $\eta$-sparse family. 
\begin{enumerate}
    \item 
For $1<p<\infty$, $\omega\in \mathcal{A}_{p(\cdot)}(\mathbb{R}^{N+1})$ and $f\in L^{p(\cdot)}(\omega)$ we have
\begin{equation}
\label{estmates-for-sparse-1-variable}\|\mathscr{A}_{\GG}f\|_{L^{p(\cdot)}(\omega)}\leq C_{N, p, \eta, \omega}\|f\|_{L^{p(\cdot)}(\omega)}.
\end{equation}
\item
Let $1<p<\infty$, $\omega\in \mathcal{A}_{p(\cdot)}(\mathbb{R}^{N+1})$, and $b\in L^{\infty}$ with small $(\delta, \mathfrak{K})$ norm where $\mathfrak{K}> r(\PP)$ for all $\PP\in \GG$. Then
\begin{equation}
\label{estmates-for-sparse-2-variable}\|\mathscr{A}_{\GG}^{b}f\|_{L^{p(\cdot)}(\omega)}+\|\mathscr{A}_{\GG}^{b, *}f\|_{L^{p(\cdot)}(\omega)}\leq C_{N, p, \eta, \omega} O(\delta)\|f\|_{L^{p(\cdot)}(\omega)}
\end{equation}
holds for all $f\in L^p(\omega)$.
\end{enumerate}
\end{lemma}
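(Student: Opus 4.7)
The plan is to mirror the proof of Lemma~\ref{main-sparse-estimates}, replacing the quantitative weighted $L^p$ bound used there with the weighted variable-exponent maximal function bound provided by Proposition~\ref{maxboundvar}. The sparse-domination architecture of Steps~1--3 in the proof of Lemma~\ref{main-sparse-estimates} is purely geometric and functional-analytic, so it can be transplanted wholesale once the scalar bound on a single sparse operator is available.

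For part~(1), I would argue by duality. Since $L^{p(\cdot)}(\omega)$ is reflexive with dual isometric to $L^{p'(\cdot)}(\omega^{-1})$ (see \cite{Diening2017}), it suffices to bound the pairing $\int \mathscr{A}_{\GG}f\cdot g\,dz$ for nonnegative $f,g$ with $\|g\|_{L^{p'(\cdot)}(\omega^{-1})}\leq 1$. Choosing disjoint major subsets $F_{\PP}\subset\PP$ with $|F_{\PP}|\geq \eta|\PP|$ and using sparseness,
\begin{align*}
\int \mathscr{A}_{\GG}f\cdot g\,dz &= \sum_{\PP\in\GG}\langle f\rangle_{\PP}\int_{\PP}g\,dz \leq \sum_{\PP\in\GG}\langle f\rangle_{\PP}\langle g\rangle_{\PP}|\PP|\\
&\leq \eta^{-1}\sum_{\PP\in\GG}\int_{F_{\PP}}\mathcal{M}f\cdot \mathcal{M}g\,dz \leq \eta^{-1}\int \mathcal{M}f\cdot \mathcal{M}g\,dz.
\end{align*}
The generalized H\"older inequality for weighted variable-exponent Lebesgue spaces then controls this by $C\|\mathcal{M}f\|_{L^{p(\cdot)}(\omega)}\|\mathcal{M}g\|_{L^{p'(\cdot)}(\omega^{-1})}$, and Proposition~\ref{maxboundvar} together with the self-duality $\omega\in\mathcal{A}_{p(\cdot)}\Leftrightarrow \omega^{-1}\in\mathcal{A}_{p'(\cdot)}$ (which is immediate from \eqref{Variable-Muckenhoupt}) yields the required bound by $C\|f\|_{L^{p(\cdot)}(\omega)}$.

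For part~(2), I would reproduce Steps~1 and~2 of the proof of Lemma~\ref{main-sparse-estimates} verbatim: they invoke only the local Cald\'eron--Zygmund decomposition in $(\RR^{N+1},d,|\cdot|)$, the smallness of the $(\delta,\mathfrak{K})$-BMO norm of $b$, and Lerner's augmentation lemma \cite{Lerner-comm}, producing a $c_{\eta}$-sparse enlargement $\tilde{\GG}\supset\GG$ and the pointwise bound
\[\mathscr{A}_{\GG}^{b,*}f(z)\leq C_{\eta,N}\,O(\delta)\,\mathscr{A}_{\GG}\bigl(\mathscr{A}_{\tilde{\GG}}|f|\bigr)(z)\quad\text{for a.e.~}z,\]
which is the variable-exponent analogue of \eqref{control-on-sparse-comm}. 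Two successive applications of the scalar bound \eqref{estmates-for-sparse-1-variable} to the composition give
\[\|\mathscr{A}_{\GG}^{b,*}f\|_{L^{p(\cdot)}(\omega)}\leq C_{\eta,N}\,O(\delta)\,\|\mathscr{A}_{\tilde{\GG}}|f|\|_{L^{p(\cdot)}(\omega)}\leq C_{N,p,\eta,\omega}\,O(\delta)\,\|f\|_{L^{p(\cdot)}(\omega)},\]
and the estimate for $\mathscr{A}_{\GG}^{b}$ follows from the duality identity $\|\mathscr{A}_{\GG}^{b}\|_{L^{p(\cdot)}(\omega)\to L^{p(\cdot)}(\omega)}=\|\mathscr{A}_{\GG}^{b,*}\|_{L^{p'(\cdot)}(\omega^{-1})\to L^{p'(\cdot)}(\omega^{-1})}$ and the transfer $\omega^{-1}\in\mathcal{A}_{p'(\cdot)}$.

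The main obstacle I anticipate is not in the sparse-domination bookkeeping, which is essentially formal once part~(1) is in hand, but rather in certifying that the three Banach-space tools used above---reflexivity and duality of $L^{p(\cdot)}(\omega)$, the generalized weighted H\"older inequality, and the self-duality of the class $\mathcal{A}_{p(\cdot)}$---extend from the Euclidean setting to the homogeneous space $(\RR^{N+1},d,|\cdot|)$. Because the global log-H\"older hypothesis on $p(\cdot)$ is taken with respect to $d$ and the Lebesgue measure is doubling for the intrinsic quasi-metric, the arguments in \cite{Diening2017,Uribe2022} carry through unchanged, so this is a careful accounting exercise rather than a new analytic difficulty.
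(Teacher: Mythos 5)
Your proposal follows the same route as the paper: part (1) by duality, sparseness, and the weighted variable-exponent maximal function bound of Proposition~\ref{maxboundvar}, and part (2) by invoking the pointwise composition estimate $\mathscr{A}_{\GG}^{b,*}f\lesssim O(\delta)\,\mathscr{A}_{\GG}(\mathscr{A}_{\tilde\GG}|f|)$ already established in Lemma~\ref{main-sparse-estimates} and then applying part (1) twice, closing with duality for $\mathscr{A}_{\GG}^{b}$. The paper leaves the duality step for $\mathscr{A}_{\GG}^{b}$ and the equivalence $\omega\in\mathcal{A}_{p(\cdot)}\Leftrightarrow\omega^{-1}\in\mathcal{A}_{p'(\cdot)}$ implicit, which you spell out explicitly; otherwise the arguments coincide.
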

\begin{proof}
Since the proof is very similar to the proof of Lemma~\ref{main-sparse-estimates}, we give a brief sketch of the proof.

\paragraph{\it\underline{Proof of \cref{estmates-for-sparse-1-variable}} :}
The proof relies on duality. Let $g\in L^{p'(\cdot)}(\omega^{-1})$ be a non-negative function. Then
\begin{align*}
\int_{\mathbb{R}^{N+1}} \mathscr{A}_{\GG}f(z) g(z)\, dz &\leq \sum_{\TT\in \GG} \left(\frac{1}{|\TT|}\int_{\TT}|f| \right) \left(\frac{1}{|\TT|}\int_{\TT} g \right) |\TT| \\
&\lesssim_{\eta} \sum_{\TT\in \GG} \left(\frac{1}{|\TT|}\int_{\TT}|f| \right) \left(\frac{1}{|\TT|}\int_{\TT} g \right) |F_\TT|\\
&\lesssim_{\eta} \sum_{\TT\in \GG} \inf_{z\in \PP} \mathcal{M}f(z)\inf_{z\in \PP} \mathcal{M}g(z) |F_\TT|\\
&\lesssim_{\eta} \sum_{\TT\in \GG}  \int_{F_{\TT}}\mathcal{M}f(z) \mathcal{M}g(z)\, dz\\
&\lesssim_{\eta} \int_{\mathbb{R}^{N+1}}\mathcal{M}f(z) \mathcal{M}g(z)\, dz\lesssim_{\eta} \|\mathcal{M}f\|_{L^{p(\cdot)}(\omega)} \|\mathcal{M}g\|_{L^{p'(\cdot)}(\omega^{-1})}\\
&\stackrel{\redlabel{a}{a}}{\lesssim}_{\eta, \omega} \|f\|_{L^{p(\cdot)}(\omega)} \|g\|_{L^{p'(\cdot)}(\omega^{-1})},
\end{align*} where \redref{a}{a} follows from \cref{maxboundvar}.
Now the proof is finished using duality.

\paragraph{\it\underline{Proof of \cref{estmates-for-sparse-2-variable}} :}
In \cref{main-sparse-estimates}, we have proved the following pointwise estimate
\begin{align*}
\mathscr{A}_{\GG}^{b, *}f(z)\leq c_{\eta, N}\, O(\delta) \mathscr{A}_{\GG}(\mathscr{A}_{\tilde{\GG}}|f|)(z),
\end{align*}
where $\tilde{\GG}$ is a $c_{\eta}$-sparse family containing $\GG$. Now \cref{estmates-for-sparse-1-variable} implies the following
\begin{align*}
\|\mathscr{A}_{\GG}^{b, *}f\|_{L^{p(\cdot)}(\omega)}&\leq c_{\eta, N}\, O(\delta) \|\mathscr{A}_{\GG}(\mathscr{A}_{\tilde{\GG}}|f|)\|_{L^{p(\cdot)}(\omega)}\\
&\leq c_{\eta, N, \omega}\, O(\delta) \|\mathscr{A}_{\tilde{\GG}}(f)\|_{L^{p(\cdot)}(\omega)}\\
&\leq c_{\eta, N, \omega}\, O(\delta) \|f\|_{L^{p(\cdot)}(\omega)}.  
\end{align*}
Thus the proof is complete.
\end{proof}

Now, in order to prove Theorem~\ref{main-theorem2}, we begin with the following local estimate.

\begin{proposition}\label{estonball2}
There exists $C=C(\text{data})$ and $r_0=r_0(\text{data})$ such that if $u\in C_0^\infty(\RR^N\times(0,\infty))$, $\text{spt}\,u\subset B_r\subset \RR^N\times(0,\infty)$ with $0<r<r_0$, then, for $i,j=1,2,\ldots,s_0$
\begin{align*}
    \|u_{x_ix_j}\|_{L^{p(\cdot)}(\omega)}\leq C_{A, B, N, p, \omega,\eta}\, \|\mathcal{L}u\|_{L^{p(\cdot)}(\omega)},
\end{align*} where the variable exponent $p(\cdot)$ satisfies the log-H\"older condition and $\omega\in\mathcal{A}_{p(\cdot)}$.
\end{proposition}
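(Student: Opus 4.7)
The plan is to mirror the strategy used in the proof of \cref{estonball1}, with \cref{main-sparse-estimates} replaced everywhere by its variable-exponent counterpart \cref{sparse-estimates-variable}. First, choose $r_0$ so small that whenever $0<r<r_0$ and $B_r$ lies in the upper half-space, there is an ultraparabolic cube $\PP$ containing $B_r$ for which the sparse families $\GG_{km,l}$ produced by \cref{represent_pointwise_bound} are contained in $\UU(\PP)$ and moreover the radii of members of these families stay below the threshold $\mathfrak{K}$ controlling the $(\delta,\mathfrak{K})$-BMO seminorm of $A$. Since $u$ is supported in $B_r$, it suffices to estimate the $L^{p(\cdot)}(\PP,\omega)$ norms of $u_{x_ix_j}$.

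Apply the pointwise bound \cref{represent3-sparse} and the triangle inequality in $L^{p(\cdot)}(\omega)$ to obtain
\begin{align*}
\|u_{x_ix_j}\|_{L^{p(\cdot)}(\PP,\omega)}
&\leq \kappa_{A,N,C}\sum_{m=1}^\infty\sum_{k=1}^{g_m}c_{km}|c_{ij}^{km}|\sum_{l=1}^{C(N)}\|\mathscr{A}_{\GG_{km,l}}(\mathcal{L}u)\|_{L^{p(\cdot)}(\PP,\omega)}\\
&\quad+\sum_{h,k=1}^{s_0}\sum_{m=1}^\infty\sum_{k=1}^{g_m}c_{km}|c_{ij}^{km}|\sum_{l=1}^{C(N)}\Bigl(\|\mathscr{A}_{\GG_{km,l}}^{a_{hk}}(u_{x_hx_k})\|_{L^{p(\cdot)}(\PP,\omega)}+\|\mathscr{A}_{\GG_{km,l}}^{a_{hk},*}(u_{x_hx_k})\|_{L^{p(\cdot)}(\PP,\omega)}\Bigr)\\
&\quad+|\alpha_{ij}|\,\|\mathcal{L}u\|_{L^{p(\cdot)}(\PP,\omega)}.
\end{align*}
Now invoke \cref{sparse-estimates-variable}: the first line is bounded by $C_{N,p(\cdot),\eta,\omega}\|\mathcal{L}u\|_{L^{p(\cdot)}(\PP,\omega)}$, and each of the commutator terms in the second line is bounded by $C_{N,p(\cdot),\eta,\omega}\,O(\delta)\,\|u_{x_hx_k}\|_{L^{p(\cdot)}(\PP,\omega)}$, with the $O(\delta)$ coming from the hypothesis that the coefficients $a_{hk}$ have small $(\delta,\mathfrak{K})$-BMO seminorm.

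The purely combinatorial fact that $\sum_{m\geq 1}\sum_{k=1}^{g_m}c_{km}|c_{ij}^{km}|$ converges has already been established in the proof of \cref{estonball1} using \cref{riemlebdecay}, \cref{kernelprop3}, \cref{horpropthm}, and \cref{dimSphHar}; we simply quote it here. This reduces the above estimate to the form
\begin{align*}
\|u_{x_ix_j}\|_{L^{p(\cdot)}(\PP,\omega)}\leq C_{A,B,N,p(\cdot),\omega,\eta}\bigl(\|\mathcal{L}u\|_{L^{p(\cdot)}(\PP,\omega)}+O(\delta)\sup_{h,k}\|u_{x_hx_k}\|_{L^{p(\cdot)}(\PP,\omega)}\bigr).
\end{align*}
Taking the supremum over $i,j\in\{1,\dots,s_0\}$ on the left and choosing $\delta$ small enough (depending on $N$, $p(\cdot)$, $\eta$, and $[\omega]_{\mathcal{A}_{p(\cdot)}}$) so that the constant in front of the $O(\delta)$ term is strictly less than $1$, the last term is absorbed into the left-hand side and the desired estimate follows since $u$ is supported in $B_r\subset\PP$.

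The main obstacle, compared with the weighted $L^p$ case, is the absorption step: the implicit constant in \cref{estmates-for-sparse-2-variable} is no longer a clean power of $[\omega]_{\mathcal{A}_{p(\cdot)}}$ but an abstract constant depending on $\omega$, so the choice of $\delta$ must be made depending on $\omega$ through the boundedness constant for the maximal operator provided by \cref{maxboundvar}. Provided we track the dependence carefully, the argument closes as above.
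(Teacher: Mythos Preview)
Your proof is correct and follows exactly the same approach as the paper, which simply states that the argument is identical to that of \cref{estonball1} with \cref{sparse-estimates-variable} substituted for \cref{main-sparse-estimates}. If anything, you have written out more detail than the paper does, including the absorption step and the remark on the $\omega$-dependence of $\delta$, both of which the paper leaves implicit.
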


\begin{proof}
The proof is exactly similar to the proof of \cref{estonball1}. The only difference is we apply \cref{sparse-estimates-variable} to the pointwise representation in \cref{represent_pointwise_bound}.
\end{proof}

Now, the proof of \cref{main-theorem2} is immediate.

\begin{proof}[Proof of Theorem~\ref{main-theorem2}] In a first step, one obtains the following estimate by a standard argument using cut-off functions and an interpolation lemma, for example, see \cite[Theorem 9.11]{GT2001}.

\begin{align}\label{gradest4.5}
    \sup\limits_{i,j}\|u_{x_ix_j}\|_{L^{p(\cdot)}(B_r,\omega)} &\leq C_{A, B, N, p, \eta,\omega}\, \left(\|\mathcal{L}u\|_{L^{p(\cdot)}(B_{2r},\omega)}+\frac{\|u\|_{L^{p(\cdot)}(B_{2r},\omega)}}{r^2}\right).
\end{align}

Now, using the fact that
\begin{align*}
    Yu=\mathcal{L}u-\sum_{i,j=1}^{s_0}a_{ij}(z)u_{x_ix_j}
\end{align*}
we readily obtain through \cref{gradest4.5} that

\begin{align}\label{gradest5}
    \|Yu\|_{L^{p(\cdot)}(B_r,\omega)} &\leq C_{A, B, N, p, \eta,\omega}\, \left(\|\mathcal{L}u\|_{L^{p(\cdot)}(B_{2r},\omega)}+\frac{\|u\|_{L^{p(\cdot)}(B_{2r},\omega)}}{r^2}\right).
\end{align}

Now we deduce the estimate in \cref{main-theorem2} by covering $\Om'$ with balls $B$ of radius $R/2$ where where $R$ has a further restriction $R<\text{dist}(\Om',\Om)$, other than the restriction from \cref{estonball1}

\end{proof}

\begin{remark}
In the next section, we will study estimates in generalized Orlicz spaces which seemingly subsume the foregoing Lebesgue and variable exponent Lebesgue spaces. However, a full-fledged theory of weights is not yet developed for the generalized Orlicz spaces and in fact, the assumptions required for the boundedness of the maximal functions on generalized Orlicz spaces do not allow for the introduction of sharp weights when restricted to specialized classes such as the variable exponent Lebesgue spaces. See \cite{UribeHasto2018} for a related discussion. For this reason, we treat the case of {\it variable exponent Lebesgue spaces} separately.
\end{remark}

\subsection{Estimates on generalized Orlicz spaces}\label{SparseDomGenOrc}

We state the basic facts on generalized Orlicz spaces. For details, we refer to the books \cite{Diening2017} and \cite{Harjulehto2019}. The concept of Orlicz spaces has found a great synthesis in the work \cite{Harjulehto2019}. An important success of such spaces is reflected in the great generality in which a number of problems from regularity theory can be studied in a unified manner while often recovering optimal results in specialized cases. An instance of this may be seen in the following works \cite{Hasto-JDE-elliptic,ByunOh2020,hasto_maximal_2021}.

A convex, left-continuous function $\varphi:[0,\infty)\to[0,\infty)$ satisfying $\varphi(0)=\lim\limits_{s\to 0^+}\varphi(s)=0$ and $\lim\limits_{s\to \infty}\varphi(s)=\infty$ is called a $\Phi$-function. We denote the set of all $\Phi$-functions by $\Phi$.
Given a subset $E\subset\RR^{N+1}$, we define $\Phi(E)$ to be the set of all functions $\varphi:E\times [0,\infty)\to[0,\infty)$ such that $\varphi(z,\cdot)\in \Phi$ for each $z\in E$ and $\varphi(\cdot,s)$ is a measurable function for each $s\geq 0$.

 The {\it{generalized Orlicz space}} or the Musielak-Orlicz space $L^{\varphi(\cdot)}(E)$ is defined to be the space of all functions $g:E\to\RR$ such that the {\it{modular} }
\begin{align*}
    \rho_{\varphi(\cdot)}(g):=\int_{E}\varphi(z,|g(z)|)\,dz
\end{align*} is finite. 

The {\it{generalized Orlicz space}} $L^{\varphi(\cdot)}(E)$ can be equipped with the Luxembourg norm
\begin{align}
    \|g\|_{L^{\varphi(\cdot)}(E)}:=\inf\left\{\lambda>0:\rho_{\varphi(\cdot)}\left(\frac{g}{\lambda}\right)\leq 1\right\}.
\end{align}

One can impose an equivalence relation on the class $\Phi(E)$ where $\varphi\in\Phi(E)$ is equivalent to $\psi\in\Phi(E)$ if there is $L>1$ such that $\varphi(z,t/L)\lesssim \psi(z,t)\lesssim \varphi(z,Lt)$ for all $z\in E$ and $t\geq 0$. Equivalent $\Phi$ functions give rise to the same generalized Orlicz spaces with comparable norm. Hence, in many cases, it may be sufficient to consider $\varphi\in\Phi(E)$ with better properties. For example, every $\Phi(E)$ function has a convex (in $t$) equivalent.

A function $\varphi^{-1}:[0,\infty]\to[0,\infty]$ is called the left inverse of $\varphi$ and defined by $\varphi^{-1}(s)=\inf\{t:\varphi(t)\geq s\}$.

The function $\varphi^{*}:[0,\infty)\to[0,\infty]$ defined by $\varphi^{*}(s)=\sup\limits_{t\geq 0}\{ts-\varphi(t)\}$ is called the conjugate of $\varphi$.

The following H\"older's inequality holds for generalized Orlicz spaces.

\begin{lemma}(\cite[Lemma 3.2.11]{Harjulehto2019})\label{holdergenOrc}
Let $\varphi\in \Phi(E)$, then
\begin{align*}
    \int\limits_E f\,g\,dz \leq 2 \|f\|_{L^{\varphi(\cdot)}(E)}\,\|g\|_{L^{\varphi^*(\cdot)}(E)}
\end{align*} for all $f\in L^{\varphi(\cdot)}(E)$ and $g\in {L^{\varphi^*(\cdot)}(E)}$.
\end{lemma}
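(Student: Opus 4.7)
The plan is to derive the inequality from the pointwise Young inequality for $\Phi$-functions, combined with the modular characterization of the Luxembourg norm. Since $\varphi(z,\cdot)$ is a left-continuous $\Phi$-function for every $z \in E$, the definition of the conjugate $\varphi^*(z, s) = \sup_{t\ge 0}\bigl(s t - \varphi(z,t)\bigr)$ immediately yields the pointwise Young inequality
\[
s\,t \le \varphi(z,t) + \varphi^*(z,s), \qquad s,t \ge 0,\ z \in E.
\]

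Next, I would normalize. Set $\alpha := \|f\|_{L^{\varphi(\cdot)}(E)}$ and $\beta := \|g\|_{L^{\varphi^*(\cdot)}(E)}$; if either vanishes the inequality is trivial, so I assume both are positive. Plugging $t = |f(z)|/\alpha$ and $s = |g(z)|/\beta$ into Young's inequality and integrating over $E$ gives
\[
\frac{1}{\alpha\,\beta}\int_E |f(z)\,g(z)|\,dz \;\le\; \rho_{\varphi(\cdot)}\!\left(\tfrac{f}{\alpha}\right) + \rho_{\varphi^*(\cdot)}\!\left(\tfrac{g}{\beta}\right).
\]
The core step is to show that each modular on the right is bounded by $1$. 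This is the standard fact that the infimum in the definition of the Luxembourg norm is realized as a limit from above; concretely, for every sequence $\lambda_n \downarrow \alpha$ one has $\rho_{\varphi(\cdot)}(f/\lambda_n) \le 1$ by the very definition of the Luxembourg norm, and left-continuity of $\varphi(z,\cdot)$ combined with the monotone convergence theorem lets one pass to the limit $\rho_{\varphi(\cdot)}(f/\alpha)\le 1$, and likewise for $g$. Combining these with the previous display yields
\[
\int_E |fg|\,dz \;\le\; 2\,\alpha\,\beta \;=\; 2\,\|f\|_{L^{\varphi(\cdot)}(E)}\,\|g\|_{L^{\varphi^*(\cdot)}(E)},
\]
which is the claimed inequality.

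The only delicate point in the argument is the attainability of the modular bound $\rho_{\varphi(\cdot)}(f/\alpha)\le 1$ at $\alpha=\|f\|_{L^{\varphi(\cdot)}(E)}$, which requires the left-continuity built into the class $\Phi(E)$; without it one only obtains the estimate up to an arbitrarily small inflation of the norm, but then one still recovers the stated inequality by letting the inflation tend to zero. Aside from this routine verification, the proof is a direct application of Young's inequality, and the factor $2$ is the standard cost of bounding a product by the sum of two modulars.
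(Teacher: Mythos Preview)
Your proof is correct and is precisely the standard argument via the pointwise Young inequality and the unit ball property. Note that the paper does not actually supply a proof of this lemma---it is quoted directly from \cite[Lemma 3.2.11]{Harjulehto2019}---so there is no in-paper argument to compare against; your derivation coincides with the classical proof found in that reference.
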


Let $\varphi\in\Phi(E)$. For $U\subset E$, we use the notation
\begin{align*}
    \varphi_U^+(s)=\sup\limits_{z\in U}\varphi(z,s),\qquad \varphi_U^-(s)=\inf\limits_{z\in U}\varphi(z,s).
\end{align*}
The notations $\varphi_E^+,\,\varphi_E^-$ will be shortened to $\varphi^+,\,\varphi^-$ respectively.

A function $g:(0,\infty)\to\RR$ is said to be $L$-almost increasing (decreasing) with $L\geq (\leq)1$, if $g(s)\leq(\geq) L\,g(t)$ for all $0<s<t$. 

Let $E$ be a domain on $\RR^{N+1}$. We restrict the class of $\Phi(E)$ functions to those satisfying the following hypothesis.

\begin{description}
\descitem{A0}{A0} There exists $\alpha\in (0,1)$ such that $\varphi(z,\alpha)\leq 1\leq \varphi(z,1/\alpha)$ for every $z\in\RR^{N+1}$.
\descitem{A1}{A1} There exists $\beta\in (0,1)$ such that 
\begin{align*}
    \beta\varphi^{-1}(z,s)\leq \varphi^{-1}(z',s)
\end{align*} for every $s\in\left[1,\frac{1}{|B|}\right]$ almost everywhere $z,z'\in B\cap E$
 and every ball $B$ such that $|B|\leq 1$.
\descitem{A2}{A2} For every $s>0$, there exists $\beta\in(0,1]$ and $h\in L^1(E)\cap L^\infty(E)$ such that
\begin{align*}
    \varphi(\zeta,\beta t)\leq \varphi(\eta,t)+h(\zeta)+h(\eta).
\end{align*} for almost all $\zeta,\eta\in E$ and $\varphi(y,t)\in [0,s]$.
\descitem{aInc$_p$}{aInc$_p$} There exists $p>1$ and $L\geq 1$ such that $\frac{\varphi(z,s)}{s^p}$ is $L$-almost increasing with respect to $s$ for every $z\in E$.
\descitem{aDec$_q$}{aDec$_q$} There exists $p>1$ and $L\leq 1$ such that $\frac{\varphi(z,s)}{s^q}$ is $L$-almost decreasing with respect to $s$ for every $z\in E$.
\end{description}

\begin{remark}
If $E$ is bounded, \descref{A2}{A2} is superfluous. See \cite[Lemma 4.2.3]{Harjulehto2019}
\end{remark}

We shall require the following lemma whose proof may be found in the statements of \cite[Lemma 3.7.6]{Harjulehto2019}, \cite[Lemma 4.1.7]{Harjulehto2019}, \cite[Lemma 4.2.4]{Harjulehto2019} and \cite[Proposition 2.4.9]{Harjulehto2019}.

\begin{lemma}
Let $E$ be an open set in $\RR^{N+1}$. Let $\varphi\in \Phi(E)$. If $\varphi$ satisfies \descref{A0}{A0}, \descref{A1}{A1}, \descref{A2}{A2}, \descref{aInc$_p$}{aInc$_p$} and \descref{aDec$_q$}{aDec$_q$}, then $\varphi^*$ satisfies \descref{A0}{A0}, \descref{A1}{A1}, \descref{A2}{A2}, and {\bfseries{{aInc}$_{q'}$}}.
\end{lemma}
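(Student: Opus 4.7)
My plan is to prove each of the four conclusions separately, exploiting the standard duality toolkit for $\Phi$-functions, and then cite the book only if a routine verification becomes lengthy. The central workhorse will be the relation
\[
\varphi^{-1}(z,t)\,(\varphi^{*})^{-1}(z,t) \approx t \qquad \text{for } z\in E,\ t\ge 0,
\]
together with the Young--Fenchel biduality $\varphi^{**}=\varphi$ (after passing to the left-continuous convex representative in the equivalence class, which is permissible since the classes being discussed are invariant under $\simeq$).

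First I would handle \descref{A0}{A0} for $\varphi^{*}$. From $\varphi(z,\alpha)\le 1\le \varphi(z,1/\alpha)$ one reads off, via the definition of the left inverse, that $\varphi^{-1}(z,1)\in[\alpha,1/\alpha]$. Applying the approximate product identity above with $t=1$ then forces $(\varphi^{*})^{-1}(z,1)\in[\alpha',1/\alpha']$ for some $\alpha'\in(0,1)$ depending only on $\alpha$ and the implicit constant, which is exactly \descref{A0}{A0} for $\varphi^{*}$. For \descref{A1}{A1} for $\varphi^{*}$, I would take two points $z,z'\in B\cap E$ and $s\in[1,1/|B|]$, write $(\varphi^{*})^{-1}(z,s)\approx s/\varphi^{-1}(z,s)$ and similarly at $z'$, and use the hypothesis $\beta\varphi^{-1}(z,s)\le \varphi^{-1}(z',s)$ to produce a comparable lower bound $\beta'(\varphi^{*})^{-1}(z,s)\le (\varphi^{*})^{-1}(z',s)$; the implicit constant absorbs into $\beta'$.

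Next, \descref{A2}{A2} for $\varphi^{*}$ is the most computational step. I would start from the given inequality $\varphi(\zeta,\beta t)\le \varphi(\eta,t)+h(\zeta)+h(\eta)$ valid on $\{(\eta,t):\varphi(\eta,t)\le s\}$ and dualise: for $u\ge 0$, write
\[
\varphi^{*}(\zeta,u) \;=\; \sup_{t\ge 0}\bigl(tu-\varphi(\zeta,t)\bigr),
\]
split the supremum according to whether the relevant level set of $\varphi$ is used, and substitute $\tau = \beta t$ to trade the translation on the inside of $\varphi$ for a translation on $u$; the $L^{1}\cap L^{\infty}$ term $h$ reappears unchanged because the supremum is linear in that translate. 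The output is $\varphi^{*}(\zeta,\beta' u)\le \varphi^{*}(\eta,u)+h(\zeta)+h(\eta)$ on a corresponding level set, which is \descref{A2}{A2} for $\varphi^{*}$. Since $E$ is bounded in our application this step is actually redundant, so I would flag that and leave the generalities to \cite[Lemma 4.1.7]{Harjulehto2019}.

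Finally, for \textbf{(aInc}$_{q'}$\textbf{)} for $\varphi^{*}$, I would use the standard fact that under biduality, the $L$-almost decrease of $t\mapsto \varphi(z,t)/t^{q}$ is equivalent to the $L'$-almost increase of $u\mapsto \varphi^{*}(z,u)/u^{q'}$ with $1/q+1/q'=1$. Concretely: if $\varphi(z,t)/t^{q}$ is almost decreasing, then replacing $\varphi$ by its supremum representation and using Young's inequality $tu\le \varphi(z,t)+\varphi^{*}(z,u)$ followed by optimising in $t$ at scale $t\approx u^{1/(q-1)}$ produces a lower bound of the form $\varphi^{*}(z,u)\gtrsim u^{q'}$ up to a multiplicative correction that is almost increasing in $u$, which is exactly \textbf{(aInc}$_{q'}$\textbf{)}. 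The main obstacle here is bookkeeping the almost-monotonicity constant $L$ through the Legendre transform; this is exactly the content of \cite[Proposition 2.4.9]{Harjulehto2019}, so I would invoke that statement rather than redo the optimisation by hand.
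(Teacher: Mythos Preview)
Your proposal is correct and takes essentially the same approach as the paper: both defer to the standard duality results in \cite{Harjulehto2019}, with the paper simply listing the four relevant references (Lemma~3.7.6, Lemma~4.1.7, Lemma~4.2.4, Proposition~2.4.9) without further comment. Your write-up supplies the roadmap connecting those citations---the product identity $\varphi^{-1}(\varphi^{*})^{-1}\approx t$ for \descref{A0}{A0} and \descref{A1}{A1}, the Legendre dualisation for \descref{A2}{A2}, and the aDec$_q\Leftrightarrow$aInc$_{q'}$ correspondence---but the substance is identical.
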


The estimates for sparse operators will require the boundedness of the maximal function on the generalized Orlicz spaces. The proof is deferred to \cref{append2}.

\begin{lemma}\label{genOrcMaxibou}
Let $E$ be an open set in $\RR^{N+1}$ and let $\varphi\in\Phi(E)$. If $\varphi$ satisfies \descref{A0}{A0}, \descref{A1}{A1}, \descref{A2}{A2}, \descref{aInc$_p$}{aInc$_p$} and \descref{aDec$_q$}{aDec$_q$} with $1<p\leq q<\infty$, then we have 
\begin{align}
    \|\mathcal{M}f\|_{L^{\varphi(\cdot)}(E)}\leq C \|f\|_{L^{\varphi(\cdot)}(E)},\,f\in L^{\varphi(\cdot)}(E)
\end{align}
\begin{align}
    \|\mathcal{M}f\|_{L^{\varphi^*(\cdot)}(E)}\leq C \|f\|_{L^{\varphi^*(\cdot)}(E)},\,f\in L^{\varphi^*(\cdot)}(E).
\end{align}
\end{lemma}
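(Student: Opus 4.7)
The plan is to adapt the Hästö--Harjulehto proof of the boundedness of the maximal operator on generalized Orlicz spaces (see \cite[Theorem 4.3.4]{Harjulehto2019}) to the homogeneous space $(\RR^{N+1},d,|\cdot|)$. The overall architecture is: (i) prove the \emph{Key Estimate} that averages of $\varphi$-modulars dominate $\varphi$ of averages up to an additive $L^1$ error; (ii) combine this with the $L^1$-boundedness of $\mathcal{M}$ on doubling metric measure spaces to get a modular inequality for $\mathcal{M}f$; (iii) upgrade the modular inequality to a norm inequality using \descref{aInc$_p$}{aInc$_p$} with $p>1$; (iv) pass to $\varphi^{*}$ using the lemma that precedes the statement. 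Throughout, \descref{A2}{A2} contributes no genuine obstruction because, in our application, $E$ is bounded (or replaced by a bounded superset) and may be dropped by \cite[Lemma 4.2.3]{Harjulehto2019}.

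First, I would prove the Key Estimate: for every $\beta_0>0$ there exists $\beta\in(0,1)$ and $h\in L^1(E)\cap L^\infty(E)$ such that
\begin{align*}
\varphi\!\left(z,\,\beta \fint_B |f(y)|\,dy\right) \;\leq\; \fint_B \varphi(y,|f(y)|)\,dy \;+\; h(z)
\end{align*}
for every ball $B\subset E$, a.e.\ $z\in B$, and every $f$ with $\|\varphi(\cdot,f)\|_{L^1}\leq \beta_0$. The proof splits $f=f\chi_{\{f\leq 1\}}+f\chi_{\{f>1\}}$. On the first piece \descref{A0}{A0} together with \descref{A2}{A2} yields a bound by $h(z)$; on the second piece the crucial use of \descref{A1}{A1} lets us pull the $\varphi(z,\cdot)$ inside the average, after applying the inverse, by rewriting $\fint_B f\chi_{\{f>1\}}$ through the left inverse $\varphi^{-1}(z,\cdot)$ and using Jensen's inequality for convex $\varphi(z,\cdot)$ (after replacing $\varphi$ by an equivalent convex representative). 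The bound $|f|>1$ supplies the lower bound $\varphi(y,f(y))\geq 1$ needed to legitimately invoke \descref{A1}{A1}, whose range $s\in[1,1/|B|]$ is the natural one.

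Next, for $\|f\|_{L^{\varphi(\cdot)}(E)}\leq 1$ (equivalently $\rho_{\varphi(\cdot)}(f)\leq 1$ by unit-ball/modular duality in Orlicz spaces), the Key Estimate gives a pointwise bound
\begin{align*}
\varphi(z,\beta \mathcal{M}f(z)) \;\lesssim\; \mathcal{M}\big(\varphi(\cdot,|f|)\big)(z) \;+\; h(z).
\end{align*}
This uses uncentered maximal function on the homogeneous space and the supremum over balls containing $z$; the doubling property of $|\cdot|$ in the metric $d$ validates the standard comparison. To upgrade this weak $L^1$ modular information to a genuine norm bound, I would invoke \descref{aInc$_p$}{aInc$_p$} with $p>1$: the function $\tilde\varphi(z,t):=\varphi(z,t^{1/p})$ is equivalent to a $\Phi$-function, and writing $\varphi(z,\mathcal{M}f)$ through $\tilde\varphi$ gives access to the $L^p$ boundedness of $\mathcal{M}$ on the doubling space (valid for all $p>1$) to conclude $\rho_{\varphi(\cdot)}(c\,\mathcal{M}f)\leq C\rho_{\varphi(\cdot)}(f)$ for small enough $c$, hence the desired norm bound for $\mathcal{M}$ on $L^{\varphi(\cdot)}(E)$.

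For the second statement, the preceding lemma already tells us that $\varphi^{*}$ satisfies \descref{A0}{A0}, \descref{A1}{A1}, \descref{A2}{A2}, and \textbf{aInc}$_{q'}$; since $q<\infty$ forces $q'>1$, the argument above applied verbatim to $\varphi^{*}$ yields $\|\mathcal{M}f\|_{L^{\varphi^{*}(\cdot)}(E)}\lesssim \|f\|_{L^{\varphi^{*}(\cdot)}(E)}$. I expect the main technical obstacle to be the rigorous derivation of the Key Estimate in the homogeneous-space setting --- specifically, handling the passage through $\varphi^{-1}$ when \descref{A1}{A1} is formulated only for balls with $|B|\leq 1$, which forces a separate treatment of ``large'' balls (where \descref{A0}{A0}, \descref{A2}{A2}, and the doubling condition let us absorb the estimate into the $L^1$ error) --- but this is essentially bookkeeping once the Euclidean model proof is followed step by step.
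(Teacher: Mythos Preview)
Your overall architecture is the same as the paper's --- adapt the H\"ast\"o--Harjulehto argument to the homogeneous space $(\RR^{N+1},d,|\cdot|)$ and then pass to $\varphi^{*}$ via the preceding lemma --- but there is a genuine gap in the execution of step~(ii). Your Key Estimate is stated \emph{without} the exponent $1/p$, so after taking the supremum you arrive at $\varphi(z,\beta\mathcal{M}f(z))\lesssim \mathcal{M}\big(\varphi(\cdot,|f|)\big)(z)+h(z)$. Integrating this fails: $\mathcal{M}$ is not bounded on $L^{1}$, so $\int \mathcal{M}\big(\varphi(\cdot,|f|)\big)$ is not controlled by $\rho_{\varphi(\cdot)}(f)$. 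Your proposed fix via $\tilde\varphi(z,t)=\varphi(z,t^{1/p})$ does not repair the already-derived pointwise bound; writing $\varphi(z,\mathcal{M}f)=\tilde\varphi(z,(\mathcal{M}f)^{p})$ still leaves $\mathcal{M}(\varphi(\cdot,f))$ on the right-hand side, and there is no $L^{p}$ structure to exploit there.

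The correct place to insert \descref{aInc$_p$}{aInc$_p$} is \emph{inside} the Key Estimate, exactly as in \cite[Theorem~4.3.2]{Harjulehto2019} and as the paper does: one proves
\[
\varphi\Big(x,\beta\fint_{B}|f|\Big)^{1/p}\leq \fint_{B}\varphi(y,|f|)^{1/p}\,dy + h(x)^{1/p}+\fint_{B}h(y)^{1/p}\,dy,
\]
so that after taking the supremum one has $\varphi(x,\beta\mathcal{M}f)^{1/p}\lesssim \mathcal{M}\big(\varphi(\cdot,f)^{1/p}\big)+\mathcal{M}(h^{1/p})$. Now raising to the $p$-th power and integrating invokes the $L^{p}$-boundedness of $\mathcal{M}$ on the doubling space, which is available. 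In effect, your $\tilde\varphi$ idea is equivalent to this, but it must be applied \emph{before} deriving the pointwise maximal bound, not after. With that reordering your sketch becomes correct and coincides with the paper's proof.
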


\begin{remark}
As remarked upon earlier, the theory of generalized Orlicz spaces is an important synthesis with regards to the regularity theory of partial differential equations modelled on operators with Orlicz growth as they encompass a great variety of growth conditions such as 
\begin{itemize}
    \item $\varphi(z,s)=a(z)s^{p(z)}$.
    \item $\varphi(z,s)=s^{p(z)}\log(e+s)$.
    \item $\varphi(z,s)=s^p+a(z)s^{q}$.
    \item $\varphi(z,s)=s^p+a(z)s^p\log(e+s)$.
\end{itemize}
In particular, in the paper \cite{hasto_maximal_2021}, essentially optimal regularity results for these classes are recovered under optimal criteria on the variable exponent $p(z)$ and the phase switching factor $a(z)$ that matches the prescribed conditions \descref{A0}{A0}, \descref{A1}{A1}, \descref{A2}{A2}, \descref{aInc$_p$}{aInc$_p$} and \descref{aDec$_q$}{aDec$_q$}.
\end{remark}

\begin{remark}
It is worthwhile to note that regularity theory for parabolic equations in the case of Orlicz growth is not very well-developed. For example, the sharp parabolic analogues of \cite{ColMin2015,ColMinBounded2015} are not known. Hence, the maximal function estimates in parabolic settings requires a different approach that treats the time and space variables separately.
\end{remark}

\begin{lemma}
\label{sparse-estimates-genOrc}
Let $\GG$ be an $\eta$-sparse family. Let $\varphi\in\Phi(E)$. If $\varphi$ satisfies \descref{A0}{A0}, \descref{A1}{A1}, \descref{A2}{A2}, \descref{aInc$_p$}{aInc$_p$} and \descref{aDec$_q$}{aDec$_q$} with $1<p\leq q<\infty$, then for all $f\in L^{\varphi(\cdot)}(E)$, we have
\begin{enumerate}
    \item 
\begin{equation}
\label{estmates-for-sparse-1-genOrc}\|\mathscr{A}_{\GG}f\|_{L^{\varphi(\cdot)}(E)}\leq C_{N, p,q, \eta, \alpha,\beta}\|f\|_{L^{\varphi(\cdot)}(E)}.
\end{equation}
\item
Further, let $b\in L^{\infty}$ with small $(\delta, \mathfrak{K})$ norm where $\mathfrak{K}> r(\PP)$ for all $\PP\in \GG$. Then, we have
\begin{equation}
\label{estmates-for-sparse-2-genOrc}\|\mathscr{A}_{\GG}^{b}f\|_{L^{\varphi(\cdot)}(E)}+\|\mathscr{A}_{\GG}^{b, *}f\|_{L^{\varphi(\cdot)}(E)}\leq C_{N, p, q, \eta, \alpha,\beta} O(\delta)\|f\|_{L^{\varphi(\cdot)}(E)}.
\end{equation}
\end{enumerate}
\end{lemma}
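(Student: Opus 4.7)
The plan is to mirror the two-step strategy employed in the proof of Lemma~\ref{sparse-estimates-variable}, swapping the weighted maximal-function bound of \cref{maxboundvar} for its Orlicz analogue in Lemma~\ref{genOrcMaxibou}, and the H\"older inequality in weighted Lebesgue spaces for the H\"older inequality in generalized Orlicz spaces (Lemma~\ref{holdergenOrc}). The crucial additional ingredient that makes this substitution work is that whenever $\varphi$ satisfies \descref{A0}{A0}, \descref{A1}{A1}, \descref{A2}{A2}, \descref{aInc$_p$}{aInc$_p$}, \descref{aDec$_q$}{aDec$_q$}, its conjugate $\varphi^*$ also falls under the hypotheses of Lemma~\ref{genOrcMaxibou} (with aInc$_{q'}$ and aDec$_{p'}$), so that $\mathcal{M}$ is bounded on both $L^{\varphi(\cdot)}(E)$ and on $L^{\varphi^*(\cdot)}(E)$.

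For part \eqref{estmates-for-sparse-1-genOrc}, I would argue by duality. The associate space of $L^{\varphi(\cdot)}(E)$ is $L^{\varphi^*(\cdot)}(E)$ up to equivalent norms, so it suffices to test against a nonnegative $g$ with $\|g\|_{L^{\varphi^*(\cdot)}(E)}\le 1$. Using the pairwise disjoint sparseness sets $F_{\TT}\subset\TT$ with $|F_{\TT}|\ge \eta|\TT|$, one estimates
\begin{align*}
\int_{E}\mathscr{A}_{\GG}f(z)\,g(z)\,dz
&\le \sum_{\TT\in \GG}\left(\fint_{\TT}|f|\right)\left(\fint_{\TT}g\right)|\TT|
\lesssim_{\eta}\sum_{\TT\in \GG}\int_{F_{\TT}}\mathcal{M}f(z)\,\mathcal{M}g(z)\,dz\\
&\le \int_{E}\mathcal{M}f(z)\,\mathcal{M}g(z)\,dz
\lesssim \|\mathcal{M}f\|_{L^{\varphi(\cdot)}(E)}\,\|\mathcal{M}g\|_{L^{\varphi^*(\cdot)}(E)}\\
&\lesssim \|f\|_{L^{\varphi(\cdot)}(E)}\,\|g\|_{L^{\varphi^*(\cdot)}(E)},
\end{align*}
where the penultimate step is Lemma~\ref{holdergenOrc} and the last step is Lemma~\ref{genOrcMaxibou} applied to both $\varphi$ and $\varphi^{*}$. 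Taking the supremum over admissible $g$ yields \eqref{estmates-for-sparse-1-genOrc}.

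For part \eqref{estmates-for-sparse-2-genOrc}, the adjoint operator $\mathscr{A}_{\GG}^{b,*}$ is handled directly: Step~3 of the proof of Lemma~\ref{main-sparse-estimates} establishes the pointwise domination
\begin{align*}
\mathscr{A}_{\GG}^{b,*}f(z)\,\lesssim\, O(\delta)\,\mathscr{A}_{\GG}\bigl(\mathscr{A}_{\tilde{\GG}}|f|\bigr)(z),
\end{align*}
for a suitably augmented sparse family $\tilde{\GG}\supset \GG$ which remains sparse (with modified constant). This argument only uses the geometry of sparse families and the small $(\delta,\mathfrak{K})$-BMO assumption, so it carries over verbatim to the present setting. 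Applying part \eqref{estmates-for-sparse-1-genOrc} twice then gives
\begin{align*}
\|\mathscr{A}_{\GG}^{b,*}f\|_{L^{\varphi(\cdot)}(E)}\,\lesssim\, O(\delta)\,\bigl\|\mathscr{A}_{\tilde{\GG}}|f|\bigr\|_{L^{\varphi(\cdot)}(E)}\,\lesssim\, O(\delta)\,\|f\|_{L^{\varphi(\cdot)}(E)}.
\end{align*}
The bound for $\mathscr{A}_{\GG}^{b}$ is then obtained by duality: since $\mathscr{A}_{\GG}^{b}$ and $\mathscr{A}_{\GG}^{b,*}$ are mutually adjoint under the standard pairing, and $\varphi^*$ satisfies the same package of conditions as $\varphi$, the estimate just proved for $\mathscr{A}_{\GG}^{b,*}$ on $L^{\varphi^*(\cdot)}(E)$ transposes to the desired bound for $\mathscr{A}_{\GG}^{b}$ on $L^{\varphi(\cdot)}(E)$.

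The main technical point I expect to check carefully is the duality $\|f\|_{L^{\varphi(\cdot)}(E)}\approx \sup\bigl\{\int_{E}f g\,:\,\|g\|_{L^{\varphi^*(\cdot)}(E)}\le 1\bigr\}$ under the working hypotheses; this is standard (see \cite[Ch.~2]{Harjulehto2019}), but requires invoking the associate-space identification for generalized Orlicz spaces. A related subtlety is verifying that $\varphi^*$ indeed inherits \descref{A0}{A0}, \descref{A1}{A1}, \descref{A2}{A2}, aInc$_{q'}$, aDec$_{p'}$ so that Lemma~\ref{genOrcMaxibou} is applicable to $\varphi^*$; the required implications are collected in the preceding lemma and in standard references on $\Phi$-functions. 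Once these structural facts are in place, the proof proceeds by the simple duality-plus-maximal-function scheme above, with no new analytical content beyond what is already present in the proof of Lemma~\ref{main-sparse-estimates}.
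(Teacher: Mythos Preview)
Your proposal is correct and follows essentially the same approach as the paper: duality against $g\in L^{\varphi^*(\cdot)}(E)$ combined with sparseness, H\"older (Lemma~\ref{holdergenOrc}), and the maximal-function bounds of Lemma~\ref{genOrcMaxibou} for part~(1), and the pointwise domination $\mathscr{A}_{\GG}^{b,*}f\lesssim O(\delta)\,\mathscr{A}_{\GG}(\mathscr{A}_{\tilde{\GG}}|f|)$ from Lemma~\ref{main-sparse-estimates} followed by two applications of part~(1) for part~(2). Your additional remarks on the associate-space duality and the inheritance of the structural conditions by $\varphi^*$ are exactly the technical points the paper invokes implicitly.
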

\begin{proof}
Since the proof is very similar to the proof of Lemma~\ref{main-sparse-estimates}, we give a brief sketch of the proof.

\paragraph{\it\underline{Proof of \cref{estmates-for-sparse-1-genOrc}} :}
The proof relies on duality. Let $g\in L^{\varphi^*(\cdot)}(E)$ be a non-negative function. Then
\begin{align*}
\int_{\mathbb{R}^{N+1}} \mathscr{A}_{\GG}f(z) g(z)\, dz &\leq \sum_{\TT\in \GG} \left(\frac{1}{|\TT|}\int_{\TT}|f| \right) \left(\frac{1}{|\TT|}\int_{\TT} g \right) |\TT| \\
&\lesssim_{\eta} \sum_{\TT\in \GG} \left(\frac{1}{|\TT|}\int_{\TT}|f| \right) \left(\frac{1}{|\TT|}\int_{\TT} g \right) |F_\TT|\\
&\lesssim_{\eta} \sum_{\TT\in \GG} \inf_{z\in \PP} \mathcal{M}f(z)\inf_{z\in \PP} \mathcal{M}g(z) |F_\TT|\\
&\lesssim_{\eta} \sum_{\TT\in \GG}  \int_{F_{\TT}}\mathcal{M}f(z) \mathcal{M}g(z)\, dz\\
&\lesssim_{\eta} \int_{\mathbb{R}^{N+1}}\mathcal{M}f(z) \mathcal{M}g(z)\, dz\stackrel{\redlabel{b}{b}}{\lesssim_{\eta}} \|\mathcal{M}f\|_{L^{\varphi(\cdot)}(\mathbb{R}^{N+1})} \|\mathcal{M}g\|_{L^{\varphi^*(\cdot)}(\mathbb{R}^{N+1})}\\
&\stackrel{\redlabel{c}{c}}{\lesssim}_{\eta, \omega} \|f\|_{L^{\varphi(\cdot)}(\mathbb{R}^{N+1})} \|g\|_{L^{\varphi^*(\cdot)}(\mathbb{R}^{N+1})},
\end{align*} where \redref{b}{b} follows from \cref{holdergenOrc} and \redref{c}{c} follows from \cref{genOrcMaxibou}.
Now the proof is finished using duality (\cite[Theorem 3.4.6]{Harjulehto2019}).

\paragraph{\it\underline{Proof of \cref{estmates-for-sparse-2-genOrc}} :}
In \cref{main-sparse-estimates}, we have proved the following pointwise estimate
\begin{align*}
\mathscr{A}_{\GG}^{b, *}f(z)\leq c_{\eta, N}\, O(\delta) \mathscr{A}_{\GG}(\mathscr{A}_{\tilde{\GG}}|f|)(z),
\end{align*}
where $\tilde{\GG}$ is a $c_{\eta}$-sparse family containing $\GG$. Now \cref{estmates-for-sparse-1-genOrc} implies the following
\begin{align*}
\|\mathscr{A}_{\GG}^{b, *}f\|_{L^{\varphi(\cdot)}(E)}&\leq c_{\eta, N}\, O(\delta) \|\mathscr{A}_{\GG}(\mathscr{A}_{\tilde{\GG}}|f|)\|_{L^{\varphi(\cdot)}(E)}\\
&\leq c_{\eta, N, \omega}\, O(\delta) \|\mathscr{A}_{\tilde{\GG}}(f)\|_{L^{\varphi(\cdot)}(E)}\\
&\leq c_{\eta, N, \omega}\, O(\delta) \|f\|_{L^{\varphi(\cdot)}(E)}.  
\end{align*}
Thus the proof is complete.
\end{proof}

\begin{proposition}\label{estonball3}
 Let $\varphi$ satisfies \descref{A0}{A0}, \descref{A1}{A1}, \descref{A2}{A2}, \descref{aInc$_p$}{aInc$_p$} and \descref{aDec$_q$}{aDec$_q$} with $1<p\leq q<\infty$. There exists $C=C(\text{data})$ and $r_0=r_0(\text{data})$ such that if $u\in C_0^\infty(\RR^N\times(0,\infty))$, $\text{spt}\,u\subset B_r\subset \RR^N\times(0,\infty)$ with $0<r<r_0$, then, for $i,j=1,2,\ldots,s_0$
\begin{align*}
    \|u_{x_ix_j}\|_{L^{\varphi(\cdot)}(B_r)}\leq C_{A, B, N, p,q,\alpha,\beta}\, \|\mathcal{L}u\|_{L^{\varphi(\cdot)}(B_r)}.
\end{align*}
\end{proposition}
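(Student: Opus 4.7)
The plan is to follow the same blueprint used in Proposition~\ref{estonball1} and Proposition~\ref{estonball2}, but now invoking the generalized Orlicz sparse bounds established in Lemma~\ref{sparse-estimates-genOrc}. First I would fix an ultraparabolic cube $\PP$ such that $B_r\subset \PP$; by choosing the radius $r$ smaller than a threshold $r_0$ dictated by the BMO control parameter $\mathfrak{K}$ in \cref{estmates-for-sparse-2-genOrc}, we can ensure that $\mathfrak{K}>r(\PP)$, which is precisely the hypothesis needed to apply the commutator bound. With this choice, \cref{represent_pointwise_bound} yields, for a.e.\ $z\in\PP$, the pointwise domination of $|u_{x_ix_j}(z)|$ by sparse and sparse-commutator operators acting on $\mathcal{L}u$ and on the entries $u_{x_hx_k}$.

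Next I would take $\|\cdot\|_{L^{\varphi(\cdot)}(\PP)}$ of both sides of \eqref{represent3-sparse} and apply the triangle inequality. The terms involving $\mathscr{A}_{\GG_{km,l}}(\mathcal{L}u)$ are controlled via \cref{estmates-for-sparse-1-genOrc}, while the commutator sparse operators $\mathscr{A}_{\GG_{km,l}}^{a_{hk}}$ and $\mathscr{A}_{\GG_{km,l}}^{a_{hk},*}$ are controlled via \cref{estmates-for-sparse-2-genOrc}, which produces a multiplicative factor $O(\delta)$ thanks to the $(\delta,\mathfrak{K})$-BMO smallness hypothesis on $A$. The zero-order term $|\alpha_{ij}|\cdot|\mathcal{L}u(z)|$ is harmless since $|\alpha_{ij}|\lesssim 1$ by \cref{propGreen}.

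After taking norms, the resulting inequality has the schematic form
\begin{align*}
\|u_{x_ix_j}\|_{L^{\varphi(\cdot)}(\PP)} \leq C_{\text{data}}\,\Sigma\,\|\mathcal{L}u\|_{L^{\varphi(\cdot)}(\PP)} + C_{\text{data}}\,\Sigma\, O(\delta)\,\sup_{h,k}\|u_{x_hx_k}\|_{L^{\varphi(\cdot)}(\PP)},
\end{align*}
where the series $\Sigma:=\sum_{m,k}c_{km}|c_{ij}^{km}|$ converges by the same computation carried out in the proof of Proposition~\ref{estonball1}: combining the Riemann--Lebesgue decay \eqref{riemlebdecay}, the size bound \eqref{kernelprop3}, the H\"ormander bound from \cref{horpropthm}, and the dimension estimate \eqref{dimSphHar}, one obtains $\Sigma\leq \sum_m m^{-2l}m^{(3N-1)/2}<\infty$ for $l>(3N-1)/4$.

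Finally, taking the supremum over $i,j\in\{1,\ldots,s_0\}$, we choose $\delta$ small enough (depending on the data, including the constants $\alpha,\beta,p,q$ that control the norm of the maximal operator on $L^{\varphi(\cdot)}$) so that $C_{\text{data}}\,\Sigma\, O(\delta)<1/2$. This absorbs the $u_{x_hx_k}$ term on the right into the left-hand side, yielding the desired estimate on $\PP$, hence on $B_r\subset\PP$. The main obstacle I anticipate is not in the outline itself but in the implicit dependence of the threshold $\delta$ on the $\Phi$-function data: unlike the Lebesgue case, the constants in \cref{sparse-estimates-genOrc} depend on $\alpha,\beta,p,q$ through the maximal-function boundedness in \cref{genOrcMaxibou}, so one has to verify that $\delta$ can be chosen uniformly in terms of these parameters (and independently of the specific cube $\PP$) so that the absorption step is legitimate; this is essentially guaranteed because the sparse family $\GG_{km,l}$ lives inside $\mathscr{U}(\PP)$ and the constants in Lemma~\ref{sparse-estimates-genOrc} are geometric.
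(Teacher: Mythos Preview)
Your proposal is correct and follows exactly the same approach as the paper: the paper's proof is a one-line reference back to the argument of \cref{estonball1}, replacing \cref{main-sparse-estimates} by \cref{sparse-estimates-genOrc}, and your write-up is precisely an unpacking of that substitution (cube $\PP\supset B_r$, pointwise bound \eqref{represent3-sparse}, norm on both sides, convergence of $\Sigma$, absorption via small $\delta$). Your closing remark about the dependence of $\delta$ on the $\Phi$-data is a valid observation but not something the paper addresses beyond the stated dependence $C_{N,p,q,\eta,\alpha,\beta}$ in \cref{sparse-estimates-genOrc}.
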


\begin{proof}
The proof is exactly similar to the proof of \cref{estonball1}. The only difference is we apply \cref{sparse-estimates-genOrc} to the pointwise representation in \cref{represent_pointwise_bound}.
\end{proof}

Now, the proof of \cref{main-theorem} is immediate.

\begin{proof}[Proof of Theorem~\ref{main-theorem}] In a first step, one obtains the following estimate by a standard argument using cut-off functions and an interpolation lemma, for example, see \cite[Theorem 9.11]{GT2001}.

\begin{align}\label{gradest6}
    \sup\limits_{i,j}\|u_{x_ix_j}\|_{L^{\varphi(\cdot)}(B_r)} &\leq C_{A, B, N, p, q, \alpha,\beta}\, \left(\|\mathcal{L}u\|_{L^{\varphi(\cdot)}(B_{2r})}+\frac{\|u\|_{L^{\varphi(\cdot)}(B_{2r})}}{r^2}\right).
\end{align}

Now, using the fact that
\begin{align*}
    Yu=\mathcal{L}u-\sum_{i,j=1}^{s_0}a_{ij}(z)u_{x_ix_j}
\end{align*}
we readily obtain through \cref{gradest6} that

\begin{align}\label{gradest7}
    \|Yu\|_{L^{\varphi(\cdot)}(B_r)} &\leq C_{A, B, N, p,q,\alpha,\beta}\, \left(\|\mathcal{L}u\|_{L^{\varphi(\cdot)}(B_{2r})}+\frac{\|u\|_{L^{\varphi(\cdot)}(B_{2r})}}{r^2}\right).
\end{align}

Now we deduce the estimate in \cref{main-theorem} by covering $\Om'$ with balls $B$ of radius $R/2$ where where $R$ has a further restriction $R<\text{dist}(\Om',\Om)$, other than the restriction from \cref{estonball3}

\end{proof}

\appendix

\section{Boundedness of maximal function on Generalized Orlicz spaces}\label{append2}

We sketch below the proof of boundedness of maximal functions in the case of $\varphi\in\Phi(\RR^{N+1})$. The case of $E\subset \RR^{N+1}$ is similar. We require the {\bf unit ball property} which asserts that

\begin{lemma}\label{unitballprop}
Let $\varphi\in\Phi(\RR^{N+1})$. Then
\begin{align*}
    \|f\|_{L^{\varphi(\cdot)}(\RR^{N+1})}<1 \implies \rho_{\varphi(\cdot)}(f)\leq 1 \implies \|f\|_{L^{\varphi(\cdot)}(\RR^{N+1})}\leq 1.
\end{align*}
\end{lemma}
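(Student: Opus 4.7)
The plan is to unwind the two implications directly from the definition of the Luxemburg norm, using only the fact that $\varphi(z,\cdot)$ is nondecreasing with $\varphi(z,0)=0$ and left-continuous (so that $\varphi(z,\cdot)$ is a $\Phi$-function in the sense of the paper). No properties of sparse operators, weights, or the geometry of $\RR^{N+1}$ enter; this is a purely measure-theoretic statement about modulars.

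For the first implication, suppose $\|f\|_{L^{\varphi(\cdot)}(\RR^{N+1})}<1$. By the definition
\[
\|f\|_{L^{\varphi(\cdot)}(\RR^{N+1})}=\inf\Bigl\{\lambda>0:\rho_{\varphi(\cdot)}(f/\lambda)\le 1\Bigr\},
\]
the infimum is strictly less than $1$, so I can pick some $\lambda\in(0,1)$ in the defining set, that is, $\rho_{\varphi(\cdot)}(f/\lambda)\le 1$. Because $\varphi(z,\cdot)$ is nondecreasing for every $z$ and $|f(z)|\le |f(z)|/\lambda$ pointwise, I get $\varphi(z,|f(z)|)\le \varphi(z,|f(z)|/\lambda)$; integration yields $\rho_{\varphi(\cdot)}(f)\le \rho_{\varphi(\cdot)}(f/\lambda)\le 1$, as required.

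For the second implication, suppose $\rho_{\varphi(\cdot)}(f)\le 1$. Then $\lambda=1$ itself lies in the set $\{\lambda>0:\rho_{\varphi(\cdot)}(f/\lambda)\le 1\}$, so the infimum defining $\|f\|_{L^{\varphi(\cdot)}(\RR^{N+1})}$ is at most $1$.

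There is essentially no obstacle, but if one wanted the sharper ``$\rho_{\varphi(\cdot)}(f)\le 1 \Longleftrightarrow \|f\|_{L^{\varphi(\cdot)}(\RR^{N+1})}\le 1$'' the remaining nontrivial point would be that $\rho_{\varphi(\cdot)}(f/\|f\|)\le 1$ whenever $\|f\|<\infty$; this is the only place where left-continuity of $\varphi(z,\cdot)$ is genuinely used, via a monotone-limit argument $\lambda_n\downarrow \|f\|$ combined with the monotone convergence theorem applied to $\varphi(z,|f(z)|/\lambda_n)\uparrow \varphi(z,|f(z)|/\|f\|)$. For the two implications actually stated in the lemma, the argument above is complete.
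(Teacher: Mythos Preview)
Your proof is correct. The paper does not actually supply a proof of this lemma; it merely states the unit ball property as a known fact and uses it in the subsequent argument, so there is nothing to compare against beyond noting that your direct unwinding of the Luxemburg norm definition is the standard justification.
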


\begin{lemma}\label{maxvarphibound}
Let $\varphi\in\Phi(\RR^{N+1})$. If $\varphi$ satisfies \descref{A0}{A0}, \descref{A1}{A1}, \descref{A2}{A2}, \descref{aInc$_p$}{aInc$_p$} and \descref{aDec$_q$}{aDec$_q$} with $1<p\leq q<\infty$, then we have 
\begin{align}
    \|\mathcal{M}f\|_{L^{\varphi(\cdot)}(\RR^{N+1})}\leq C \|f\|_{L^{\varphi(\cdot)}(\RR^{N+1})},\,f\in L^{\varphi(\cdot)}(\RR^{N+1})
\end{align}
\begin{align}
    \|\mathcal{M}f\|_{L^{\varphi^*(\cdot)}(\RR^{N+1})}\leq C \|f\|_{L^{\varphi^*(\cdot)}(\RR^{N+1})},\,f\in L^{\varphi^*(\cdot)}(\RR^{N+1}).
\end{align}
\end{lemma}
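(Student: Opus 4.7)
The plan is to establish the maximal function bound by first reducing to a modular estimate and then using the structural conditions on $\varphi$ to relate $\varphi$-averages at a point to averages of $\varphi$-values at nearby points. By the unit ball property (Lemma \ref{unitballprop}) together with a scaling argument that uses \descref{aInc$_p$}{aInc$_p$}, the norm bound $\|\mathcal{M}f\|_{L^{\varphi(\cdot)}(\RR^{N+1})} \leq C\|f\|_{L^{\varphi(\cdot)}(\RR^{N+1})}$ reduces to the modular claim: whenever $\rho_{\varphi(\cdot)}(f) \leq 1$, one has $\rho_{\varphi(\cdot)}(\mathcal{M}f/C) \leq 1$ for some universal $C$ depending only on the structural data.

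The central technical step is the pointwise key estimate: for a.e. $z \in \RR^{N+1}$ and every ball $B \ni z$,
$$\varphi\!\left(z,\ \beta \fint_B |f|\,dy\right) \,\leq\, \fint_B \varphi(y, |f(y)|)\,dy \,+\, h(z) \,+\, \mathcal{M}h(z),$$
where $h \in L^1(\RR^{N+1}) \cap L^\infty(\RR^{N+1})$ depends only on $\varphi$. The proof splits on $|B|$: for balls with $|B| \leq 1$, I would apply Jensen's inequality to the convex function $\varphi(z,\cdot)$, then transfer $\varphi(z,\cdot)$ to $\varphi(y,\cdot)$ via the inverse-function formulation of \descref{A1}{A1}, handling values near the threshold of \descref{A1}{A1} using \descref{A0}{A0}; for balls with $|B| > 1$, I would instead invoke \descref{A2}{A2}, which yields $\varphi(z,\beta t) \leq \varphi(y,t) + h(z) + h(y)$ pointwise on $\{(y,t) : \varphi(y,t) \leq \rho_{\varphi}(f)\}$, followed by averaging in $y$ over $B$. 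Taking supremum over $B \ni z$ then gives
$$\varphi\bigl(z,\beta\,\mathcal{M}f(z)\bigr) \,\leq\, \mathcal{M}\bigl(\varphi(\cdot,|f|)\bigr)(z) + h(z) + \mathcal{M}h(z).$$

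To complete the argument, I would integrate this estimate, using the classical $L^r$-boundedness of $\mathcal{M}$ on the homogeneous space $(\RR^{N+1},d,|\cdot|)$ for some $r > 1$; the condition \descref{aInc$_p$}{aInc$_p$} ensures that the auxiliary function $\varphi(\cdot,|f|)^{1/p}$ belongs to $L^p$ whenever $\rho_{\varphi}(f) \leq 1$, and an application of $\mathcal{M}$ at exponent $p$ (combined with \descref{aDec$_q$}{aDec$_q$} to interpolate back) controls $\mathcal{M}(\varphi(\cdot,|f|))$ in the required sense. For the conjugate function estimate, I would first verify that $\varphi^*$ satisfies \descref{A0}{A0}, \descref{A1}{A1}, \descref{A2}{A2}, \descref{aInc$_{q'}$}{aInc$_{q'}$}, and \descref{aDec$_{p'}$}{aDec$_{p'}$} (the swap of aInc and aDec under Young conjugation being standard), and then apply the same argument to $\varphi^*$.

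The main obstacle is coupling the local control provided by \descref{A1}{A1} with the global control provided by \descref{A2}{A2} into a single clean pointwise inequality. Specifically, \descref{A1}{A1} gives information only for $s \in [1, 1/|B|]$, so the transfer of $\varphi$-values between points is valid only when the relevant average $\fint_B |f|$ lies in that range; values outside this range must be treated separately using \descref{A0}{A0} for small values and \descref{A2}{A2} for large balls, while keeping the error function $h$ integrable and uniformly bounded. A careful bookkeeping of the constants $\alpha, \beta$ in \descref{A0}{A0}--\descref{A2}{A2} through the argument is what makes the combination work.
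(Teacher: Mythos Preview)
Your overall strategy matches the paper's, but the key pointwise estimate you write down is too weak to close the argument, and your proposed patch does not work.

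Your key estimate reads
\[
\varphi\!\left(z,\ \beta \fint_B |f|\right) \leq \fint_B \varphi(y, |f(y)|)\,dy + h(z) + \mathcal{M}h(z),
\]
and after taking supremum over balls you obtain $\varphi(z,\beta\,\mathcal{M}f(z)) \leq \mathcal{M}\bigl(\varphi(\cdot,|f|)\bigr)(z) + h(z) + \mathcal{M}h(z)$. When you integrate, the right-hand side contains $\int_{\RR^{N+1}} \mathcal{M}\bigl(\varphi(\cdot,|f|)\bigr)\,dz$, and since $\rho_{\varphi(\cdot)}(f)\leq 1$ only places $\varphi(\cdot,|f|)$ in $L^1$, this integral is in general infinite: the maximal operator is not bounded on $L^1$. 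Your suggested repair, controlling $\mathcal{M}(\varphi(\cdot,|f|))$ through $\varphi(\cdot,|f|)^{1/p}\in L^p$, goes the wrong way: Jensen's inequality gives $\bigl(\mathcal{M}(g^{1/p})\bigr)^p \leq \mathcal{M}(g)$ for $p\geq 1$, not the reverse, so the $L^p$-boundedness of $\mathcal{M}$ applied to $g^{1/p}$ does not bound $\int \mathcal{M}(g)$. No amount of \descref{aDec$_q$}{aDec$_q$} interpolation recovers this.

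The correct use of \descref{aInc$_p$}{aInc$_p$} is not a post-hoc patch but must be built into the key estimate itself. The paper (following \cite[Theorem~4.3.2]{Harjulehto2019}) proves
\[
\varphi\!\left(x, \beta\fint_{B}|f|\right)^{1/p}\leq \fint_B\varphi(y,|f|)^{1/p}\,dy + h(x)^{1/p} + \fint_B h(y)^{1/p}\,dy,
\]
with the exponent $1/p$ present from the start; this is possible precisely because \descref{aInc$_p$}{aInc$_p$} makes $t\mapsto\varphi(\cdot,t)^{1/p}$ satisfy aInc$_1$, so the Jensen-type transfer step survives. After taking supremum one gets $\varphi(x,\beta\mathcal{M}f)^{1/p}\lesssim \mathcal{M}\bigl(\varphi(\cdot,|f|)^{1/p}\bigr)(x)+\mathcal{M}(h^{1/p})(x)$, and now raising to the $p$-th power and integrating invokes only the $L^p$-boundedness of $\mathcal{M}$, which is available. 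Thus the exponent $1/p$ is not cosmetic: it is exactly what converts the unusable $L^1$-estimate into a usable $L^p$-estimate.
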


\begin{proof}

{{\it\underline{Step 1}} (Boundedness of maximal function on ultraparabolic $L^p$ spaces):} 

As described in the beginning of \cref{sparsedomprop}, there exist finitely many dyadic grids $\{\mathscr{U}^t\}_{t=1}^{L}$ such that 
\begin{align}
\mathcal{M}f\lesssim \sum_{t=1}^{L} \mathcal{M}_{\mathscr{U}^t}f,
\end{align} where $\mathcal{M}_{\mathscr{U}^t}$ is a dyadic maximal function. Now, the boundedness of maximal function follows from the boundedness of a dyadic maximal function whose proof may be found in \cite[Eq. 7.8]{HytKai}. Recall that we are strongly using the fact that the ultraparabolic cubes are comparable to the balls defined in \cref{balldef}.

{{\it\underline{Step 2}} (Key estimate):} 

The following lemma from \cite{Harjulehto2019} goes through as is for the homogeneous space $(\RR^{N+1},d,|\cdot|)$.

\begin{theorem}(\cite[Theorem 4.3.2]{Harjulehto2019})
Let $\varphi\in\Phi(\RR^{N+1})$. If $\varphi$ satisfies \descref{A0}{A0}, \descref{A1}{A1}, \descref{A2}{A2}, \descref{aInc$_p$}{aInc$_p$} with $1<p<\infty$, then there exists $\beta>0$ and $h\in L^1(\RR^{N+1})\cap L^\infty(\RR^{N+1})$ such that
\begin{align}\label{keyest}
    \varphi\left(x, \beta\fint_{B}|f|\,dy\right)^{\frac 1p}\leq \fint\varphi(y,|f|)^{\frac 1p}\,dy + h(x)^{\frac 1p} + \fint_B h(y)^{\frac 1p}\,dy,
\end{align} for every ball $B\subset\RR^{N+1}$, $x\in B$ and $f\in L^{\varphi(\cdot)}(\RR^{N+1})$ such that $\rho_{\varphi(\cdot)}(f)\leq 1$.
\end{theorem}

{{\it\underline{Step 3}}:} 
Taking supremum over all balls in the inequality \cref{keyest}, we obtain
\begin{align}\label{keyest2}
    \varphi\left(x, \beta\mathcal{M}f(x)\right)^{\frac 1p}\lesssim \mathcal{M}(\varphi(\cdot,f)^{\frac 1p})(x) + \mathcal{M}(h^{\frac 1p})(x),
\end{align}for every  $x\in \RR^{N+1}$ and $f\in L^{\varphi(\cdot)}(\RR^{N+1})$ such that $\rho_{\varphi(\cdot)}(f)\leq 1$. Here, we use the fact that $h(x)^{\frac 1p}\leq \mathcal{M}(h^{\frac 1p})(x)$.

{{\it\underline{Step 4}}:} 
Let $f\in L^{\varphi(\cdot)}(\RR^{N+1})$ such that $\|f\|_{L^{\varphi(\cdot)}(\RR^{N+1})}>0$. Choose $\varepsilon:=\frac{1}{2\|f\|_{L^{\varphi(\cdot)}(\RR^{N+1})}}$, then by \cref{unitballprop}, it holds that $\rho_{\varphi(\cdot)}(\ve f)\leq 1$ so that we may apply \cref{keyest2} to obtain
\begin{align}\label{keyest3}
    \varphi\left(x, \beta\ve\mathcal{M}f(x)\right)^{\frac 1p}\lesssim \mathcal{M}(\varphi(\cdot,\ve f)^{\frac 1p})(x) + \mathcal{M}(h^{\frac 1p})(x).
\end{align}
Raising both sides of \cref{keyest3} to power $p$ and integrating over $\RR^{N+1}$, we get
\begin{align}\label{keyest4}
    \int\limits_{\RR^{N+1}}\varphi\left(x, \beta\ve\mathcal{M}f(x)\right)\,dx & \lesssim \int\limits_{\RR^{N+1}}\left(\mathcal{M}(\varphi(\cdot,\ve f)^{\frac 1p})(x)\right)^p\,dx + \int\limits_{\RR^{N+1}}\left(\mathcal{M}(h^{\frac 1p})(x)\right)^p\,dx\nonumber\\
    &\leq \int\limits_{\RR^{N+1}}\varphi(\cdot,\ve f)(x)\,dx + \int\limits_{\RR^{N+1}}h(x)\,dx\nonumber\\
    &=\rho_{\varphi(\cdot)}(f) + \|h\|_{L^1(\RR^{N+1})}\nonumber\\
    &\leq 1 + \|h\|_{L^1(\RR^{N+1})} = C_1,
\end{align} where in the second inequality, we invoke boundedness of maximal function on $L^p(\RR^{N+1})$ from Step 1. The inequality \cref{keyest4} is the same as $\rho_{\varphi(\cdot)}(\beta\ve\mathcal{M}f)\leq C_1$ so that by {\bfseries{{aInc}$_{1}$}}(which is a consequence of convexity and $\phi(\cdot,0)=0$), we have $\rho_{\varphi(\cdot)}\left(\frac{\beta}{L C_1}\ve\mathcal{M}f\right)\leq 1$. Once again, applying \cref{unitballprop}, we get
$\|\frac{\beta}{L C_1}\ve\mathcal{M}f\|_{L^{\varphi(\cdot)}}\leq 1$ so that $\|\mathcal{M}f\|_{L^{\varphi(\cdot)}}\leq \frac{C_1 L}{\ve\beta}=\frac{2C_1 L}{\beta}\|f\|_{L^{\varphi(\cdot)}}$, which is the required result.

{{\it\underline{Step 5}}:} The result for $\varphi^*(\cdot)$ follows by the fact that it satisfies \descref{A0}{A0}, \descref{A1}{A1}, \descref{A2}{A2}, and {\bfseries{{aInc}$_{q'}$}}

\end{proof}

\medskip

\subsection*{Acknowledgement} We are thankful to Dr. Karthik Adimurthi and Dr. Agnid Banerjee for many fruitful discussions.

\providecommand{\bysame}{\leavevmode\hbox to3em{\hrulefill}\thinspace}
\providecommand{\MR}{\relax\ifhmode\unskip\space\fi MR }
\providecommand{\MRhref}[2]{%
  \href{http://www.ams.org/mathscinet-getitem?mr=#1}{#2}
}
\providecommand{\href}[2]{#2}

\end{document}